\newtheorem{thm}{Theorem}
\newtheorem{prop}[thm]{Proposition}
\newtheorem{lem}[thm]{Lemma}
\theoremstyle{remark}
\newtheorem{rem}[thm]{Remark}
\theoremstyle{definition}
\newcommand{\R}{\mathbb R}
\newcommand{\Z}{\mathbb Z}
\newcommand{\HH}{\mathbb H}
\newcommand{\SL}{\mathrm{SL_n(\R)}}
\newcommand{\PSL}{\mathrm{PSL_n(\R)}}
\newcommand{\GL}{\mathrm{GL_n(\R)}}
\newcommand{\PGL}{\mathrm{PGL_n(\R)}}
\newcommand{\Flag}{\mathrm{Flag}(\R^n)}
\newcommand{\Right}{\mathrm{right}}
\newcommand{\Left}{\mathrm{left}}
\newcommand{\Hit}{\mathrm{Hit}_{\mathrm{n}}}
\newcommand{\Id}{\mathrm{Id}}
\newcommand{\db}{/\kern -3pt/}
\renewcommand{\leq}{\leqslant}
\renewcommand{\geq}{\geqslant}
\renewcommand{\phi}{\varphi}
\renewcommand{\epsilon}{\varepsilon}
\title
{Parametrizing Hitchin components}
\author{Francis Bonahon}
\address {Department
of Mathematics,  University of
Southern California, Los Angeles 
CA~90089-2532, U.S.A.}
\email{fbonahon@math.usc.edu}
\author{Guillaume Dreyer}
\address {Department
of Mathematics,  University of
Notre Dame, Notre Dame IN~46556, U.S.A.}
\email{dreyfactor@gmail.com, gdreyer@nd.edu}
\thanks{This research was partially supported by the grants  DMS-0604866 and DMS-1105402 from the National Science Foundation.}
\date{\today}
\begin{document}

\begin{abstract}
We construct a geometric, real analytic parametrization of the Hitchin component $\mathrm{Hit_n}(S)$ of the $\PSL$--character variety $\mathcal R_{\PSL}(S)$ of a closed surface $S$. The approach is explicit and constructive. In essence, our parametrization is an extension of Thurston's shearing coordinates for the Teichm\"uller space of a closed surface, combined with Fock-Goncharov's coordinates for the moduli space of positive framed local systems of a punctured surface. More precisely, given a maximal geodesic lamination $\lambda\subset S$ with finitely many leaves, we introduce two types of invariants for elements of the Hitchin component: shear invariants associated with each leaf of $\lambda$; and  triangle invariants associated with each component of the complement $S-\lambda$. We describe  identities and relations satisfied by these  invariants, and use the resulting coordinates to parametrize the Hitchin component.
\end{abstract}

\maketitle

For a closed, connected, oriented surface $S$ of genus $g>1$, this article is concerned with the space of homomorphisms $\rho \colon \pi_1(S) \to \PSL$ from the fundamental group $\pi_1(S)$ to the Lie group $\PSL$ (equal to the special linear group $\SL$ if $n$ is odd, and to  $\SL/\pm \Id$ if $n$ is even), and  more precisely with a preferred component $\Hit(S)$ of the character variety
$$
\mathcal R_{\PSL}(S) = \{ \text{homomorphisms } \rho \colon \pi_1(S) \to \PSL \} \db \PSL,
$$
where the group $\PSL$ acts on homomorphisms $\pi_1(S) \to \PSL$ by conjugation. The precise definition of the character variety $\mathcal R_{\PSL}(S) $ requires that the quotient be taken in the sense of geometric invariant theory \cite{Mum}; however, for the component $\Hit(S)$ that we are interested in, this quotient construction coincides with the usual topological quotient \cite{Hit}. Also, note that the consideration of homomorphisms $\pi_1(S) \to \PSL$ is essentially equivalent,  by arguments involving the cohomology groups $H^1(S;\R^*)$ and $H^1(S; \Z_2)$, to the analysis of general representations $\pi_1(S) \to \GL$. 

In the case where $n=2$, the character variety $\mathcal R_{\mathrm{PSL}_2(\R)}(S) $ has $4g-3$ components  \cite{Gold1}. Two of these components correspond to all injective homomorphisms $\rho\colon \pi_1(S) \to \mathrm{PSL}_2(\R)$ whose image $\rho \bigl( \pi_1(S) \bigr)$ is discrete in $\mathrm{PSL}_2(\R)$. Identifying $\mathrm{PSL}_2(\R)$ with the orientation-preserving isometry group of the hyperbolic plane $\HH^2$, the orientation of $S$ then singles out one of these two components, where the natural equivalence relation $S \to \HH^2/\rho \bigl( \pi_1(S) \bigr)$ has degree $+1$. This preferred component of $\mathcal R_{\mathrm{PSL}_2(\R)}(S) $ is the \emph{Teichm\"uller component} $\mathcal T(S)$. By the Uniformization Theorem, the Teichm\"uller component $\mathcal T(S)$ is diffeomorphic to the space of complex structures on $S$, and consequently plays a fundamental r\^ole in complex analysis as well as in $2$--dimensional hyperbolic geometry. In particular, a classical result is that it is diffeomorphic to $\R^{6(g-1)}$. 

In the general case, there is a preferred homomorphism $\mathrm{PSL}_2(\R) \to \PSL$ coming from the unique $n$--dimensional representation of $\mathrm{SL}_2(\R)$ (or, equivalently, from the natural action of $\mathrm{SL}_2(\R) $ on the vector space $\R[X,Y]_{n-1}\cong \R^n$ of homogeneous polynomials  of degree $n-1$ in two variables). This provides a natural map $\mathcal R_{\mathrm{PSL}_2(\R)}(S) \to \mathcal R_{\PSL}(S) $. The \emph{Hitchin component} $\Hit(S)$ is the component of $\mathcal R_{\PSL}(S)$ that contains the image of the Teichm\"uller component of $\mathcal R_{\mathrm{PSL}_2(\R)}(S) $. A \emph{Hitchin representation} is a homomorphism $\rho \colon \pi_1(S) \to \PSL$ representing an element of the Hitchin component. The terminology is motivated by the following fundamental result of N.~Hitchin \cite{Hit}, who was the first to single out this component. 

\begin{thm} [Hitchin]
\label{thm:Hitchin}
When $n\geq 3$, the character variety has $3$ or $6$ components according to whether $n$ is odd or even, and the Hitchin component $\Hit(S)$ is diffeomorphic to $\R^{2(g-1)(n^2-1)}$. 
\end{thm}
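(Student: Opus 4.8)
The plan is to establish the diffeomorphism $\Hit(S)\cong\R^{2(g-1)(n^2-1)}$ by the method of Higgs bundles, following Hitchin's original argument, and to treat the enumeration of components of $\mathcal R_{\PSL}(S)$ as a separate, more topological matter. First I would fix an auxiliary complex structure on $S$, turning it into a compact Riemann surface $X$ with canonical bundle $K$, and invoke the non-abelian Hodge correspondence: by Corlette's theorem on the existence of equivariant harmonic maps for reductive representations, together with the Hitchin--Kobayashi correspondence between solutions of the self-duality equations and polystable Higgs bundles, the character variety $\mathcal R_{\PSL}(S)$ is real-analytically identified with the moduli space $\mathcal M$ of polystable $\PSL$--Higgs bundles $(E,\phi)$ on $X$, where $\phi$ is a holomorphic section of $\mathrm{ad}(E)\otimes K$. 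This converts a transcendental object into a complex-analytic one on which one can use holomorphic techniques.

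The core of the argument is the study of the \emph{Hitchin map}
\[
h\colon \mathcal M \longrightarrow B:=\bigoplus_{i=2}^{n}H^0(X,K^i),\qquad (E,\phi)\longmapsto \bigl(p_2(\phi),\dots,p_n(\phi)\bigr),
\]
sending a Higgs field to the coefficients of its characteristic polynomial. The key device is the \emph{Hitchin section} $s\colon B\to\mathcal M$ built from the principal homomorphism $\mathrm{SL}_2(\R)\hookrightarrow\PSL$: one starts from $E_0=K^{(n-1)/2}\oplus K^{(n-3)/2}\oplus\dots\oplus K^{-(n-1)/2}$ with its standard principal nilpotent Higgs field $\phi_0$, and adds to $\phi_0$ a lower-triangular perturbation whose entries encode a tuple $(q_2,\dots,q_n)\in B$. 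I would then check, in order, that every pair in the image of $s$ is stable; that $s$ is a holomorphic section of $h$; that $s$ is a real-analytic embedding; and --- the decisive point --- that $s(B)$ is simultaneously open and closed in the real locus of $\mathcal M$, hence a connected component of it. Since $s(0)$ is the Higgs bundle attached to a Fuchsian representation uniformizing $X$, composed with the principal $\mathrm{SL}_2(\R)\hookrightarrow\PSL$, transporting this back through non-abelian Hodge shows that $s(B)$ is precisely the component $\Hit(S)$, and $s$ then provides a real-analytic diffeomorphism $B\cong\Hit(S)$.

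For the dimension count, Riemann--Roch gives, for each $i\geq 2$, that $\deg K^i=i(2g-2)>\deg K$, hence $H^1(X,K^i)=0$ and $\dim_{\C}H^0(X,K^i)=(2i-1)(g-1)$; summing over $i$ and using $\sum_{i=2}^n(2i-1)=n^2-1$ yields $\dim_{\C}B=(g-1)(n^2-1)$, so that $\dim_{\R}\Hit(S)=2(g-1)(n^2-1)$. The count of components of $\mathcal R_{\PSL}(S)$ I would obtain separately, from the obstruction-theoretic invariants of the underlying flat $\PSL$--bundle of a reductive representation --- a second Stiefel--Whitney type class in $H^2(S;\pi_1\PSL)$, refined by an additional $\Z_2$ invariant when $n$ is even, the source of the difference between $3$ and $6$ --- together with the observation that the Hitchin component, being contractible, is topologically isolated; a Goldman-type connectedness argument then shows that these invariants exhaust the list.

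The hard part is the claim that $s(B)$ is a full connected component. Closedness is comparatively soft: it follows from properness of the Hitchin map $h$, which controls sequences of Higgs bundles with bounded characteristic polynomial. Openness is the delicate point and requires showing that the deformation complex of each $(E,\phi)\in s(B)$ has vanishing second hypercohomology, so that $\mathcal M$ is smooth of the expected dimension along $s(B)$ and $s$ is locally onto the real locus. One further matter to handle with care is the independence of the whole construction from the auxiliary complex structure $X$: since $\Hit(S)$ is a connected component of an object defined with no reference to any complex structure, the real-analytic family of identifications obtained as $X$ varies cannot move it, so the resulting parametrization is canonical up to the action of the mapping class group.
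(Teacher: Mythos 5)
The paper does not actually prove this theorem: it is quoted from Hitchin \cite{Hit}, with the explicit remark that Hitchin's proof is based on Higgs bundles and geometric analysis, and the article's own contribution (Theorem~\ref{thm:Parametrize}) is a different, geodesic-lamination parametrization rather than a new proof of this statement. Your proposal is, in substance, a faithful outline of Hitchin's original argument: non-abelian Hodge theory (Corlette plus the Hitchin--Kobayashi correspondence), the Hitchin map, the section attached to the principal $\mathrm{SL}_2(\R)\hookrightarrow\PSL$, the Riemann--Roch count $\dim_{\C}H^0(X,K^i)=(2i-1)(g-1)$ with $\sum_{i=2}^n(2i-1)=n^2-1$, and a characteristic-class count of components; the dimension computation is complete as written and the overall scheme is correct. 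What this buys, compared with the paper, is exactly what the paper forgoes: a holomorphic, moduli-theoretic description, at the price of fixing an auxiliary complex structure and of losing geometric information about individual representations, which is precisely the gap the paper's shear/triangle coordinates are designed to fill. Be aware, though, that everything you label ``the hard part'' is where the actual content of \cite{Hit} lies and is asserted rather than proved in your sketch: stability of the Higgs bundles in the image of the section; openness, which needs vanishing of the second hypercohomology of the deformation complex together with the equality of $\dim_{\R}B=2(g-1)(n^2-1)$ with the dimension of the moduli space of reductive flat $\mathrm{PSL_n(\R)}$-connections; closedness via properness of the Hitchin map; and the enumeration $3$ versus $6$, which requires knowing that $\pi_1\bigl(\PSL\bigr)$ has order $2$ for $n$ odd and $4$ for $n$ even, that each value of the obstruction class is realized by a connected set away from the Hitchin-type components, and that there is one such extra component for $n$ odd and two for $n$ even (exchanged by conjugation by $\PGL/\PSL$, a fact this paper itself uses in \S\ref{subsect:PhiHomeo}). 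Two smaller points to phrase carefully: for $n$ even the bundle $K^{(n-1)/2}\oplus\dots\oplus K^{-(n-1)/2}$ requires a choice of square root of $K$, harmless only after projectivizing; and ``open and closed in the real locus of $\mathcal M$'' should be replaced by ``open and closed in the moduli space of reductive flat $\mathrm{PSL_n(\R)}$-connections'', since the fixed locus of the relevant real involution also contains representations into other real forms.
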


Hitchin's proof of these results is based on the theory of Higgs bundles, and relies on techniques  of geometric analysis. Hitchin notes in \cite{Hit} that these methods do not provide any geometric information on individual Hitchin representations. 

In the case $n=3$, S. Choi and W. Goldman \cite{ChGold} showed that the Hitchin component $\mathrm{{Hit}_{3}}(S)$ parametrizes the deformation space of {real convex projective structures} on $S$, and Goldman \cite{Gold2} used this point of view to construct   an explicit parametrization of  $\mathrm{{Hit}_{3}}(S)$ by $\R^{16(g-1)}$, via an extension of the classical Fenchel-Nielsen coordinates for the Teichm\"uller space $\mathcal T(S)$.


A decade later, F.~Labourie \cite{Lab1} showed in the general case  that Hitchin representations are injective and have discrete image in $\PSL$. He achieved this by establishing a very powerful Anosov property for  Hitchin representations. This Anosov property associates to each Hitchin representation $\rho\colon \pi_1(S) \to \PSL$ a  certain \emph{flag curve} valued in the space $\Flag$ of all flags in $\R^n$, which is invariant under the image  $\rho\bigl(\pi_1(S) \bigr) \subset \PSL$. The same invariant flag curve was similarly provided by independent work of V.~Fock and A.~Goncharov \cite{FoG1}, who in addition established a certain positivity property for this flag curve. This approach also proves the faithfulness and the discreteness of Hitchin representations. The point of view of  Fock and Goncharov is algebraic geometric and relies on  G.~Lusztig's notion of positivity \cite{Lusz1, Lusz2}; in particular, it  is very different from Labourie's.

The main achievement of the current paper is to provide a new parametrization of the Hitchin component $\Hit(S)$ by $\R^{2(g-1)(n^2-1)}$,  which is much more closely related to the geometry of Hitchin representations than Hitchin's original parametrization. It relies on the methods developed by Labourie and Fock-Goncharov.  When $n=2$, this parametrization coincides with the parametrization of the Teichm\"uller space $\mathcal T(S)$ via the shear coordinates associated with a maximal geodesic lamination $\lambda$ that were developed in \cite{Thu, Bon}. 

\begin{figure}[htbp]

\SetLabels
( .25*-.2 ) (a) \\
(.85 *-.2 ) (b) \\
\endSetLabels
\centerline{\AffixLabels{ \includegraphics{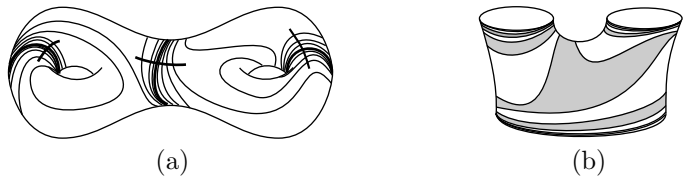} }}

\vskip 10pt

\caption{A finite geodesic lamination coming from a pair of pants decomposition}
\label{fig:GeodLam1}
\end{figure}

We begin with some topological data on the surface $S$, consisting of a maximal geodesic lamination $\lambda$ with finitely many leaves. For instance, in the spirit of the classical Fenchel-Nielsen coordinates (see for instance \cite[\S7.6]{Hub}) for the Teichm\"uller space $\mathcal T(S)$, such a geodesic lamination can be obtained from a decomposition of $S$ into pairs of pants,  by cutting each pair of pants along three bi-infinite curves spiraling around the boundary components to obtain three infinite triangles; the geodesic lamination $\lambda$ then consists of the $3(g-1)$ closed curves of the pair of pants decomposition, together with the $6(g-1)$ spiraling bi-infinite curves; see Figure~\ref{fig:GeodLam1}(a) for an example, while Figure~\ref{fig:GeodLam1}(b) illustrates how to split a pair of pants along three spiraling curves to obtain two infinite triangles. In general, for an arbitrary auxiliary metric of negative curvature on $S$, a maximal geodesic lamination with finitely many leaves consists of $s$ disjoint closed geodesics, with $1\leq s \leq 3(g-1)$, and of $6(g-1)$ disjoint bi-infinite geodesics whose ends spiral around these closed geodesics and which split the surface $S$ into $4(g-1)$ infinite triangles. 

Given a Hitchin representation  $\rho \colon \pi_1(S) \to \PSL$, the Anosov structure discovered by Labourie and the positivity property introduced by Fock-Goncharov enable us to read a certain number of invariants of $\rho$. These include $\frac12(n-1)(n-2)$ real numbers (called \emph{triangle invariants}) associated with each of the $4(g-1)$ triangles of $S-\lambda$, and $n-1$ real numbers (called \emph{shear invariants}) associated with each of the $6(g-1) +s$  leaves of the geodesic lamination $\lambda$. 

The triangle invariants, and the shear invariants associated with the infinite leaves, were introduced by Fock and Goncharov in their parametrization \cite{FoG1} of the so-called moduli space of positive framed local systems of a surface $S$, where $S$ is required to have at least one puncture. This moduli space is the natural extension of the Hitchin component to punctured surfaces; see \cite{BAG} for the Higgs bundle point of view on this space. The construction of the shear invariants associated with closed leaves is new, but very similar to that of infinite leaves.

A major difference with the punctured-surface case of Fock and Goncharov lies in the fact that,  when the surface $S$ is closed, the triangle and  shear invariants are not independent of each other. Indeed, they satisfy $n-1$ linear equalities and $n-1$ linear inequalities for each of the $s\geq 1$ closed leaves of $\lambda$. It turns out that these equalities and inequalities are the only relations satisfied by these invariants, and that they can be used to parametrize $\Hit(S)$.

\begin{thm}
\label{thm:Parametrize}
The above triangle and shear invariants provide a real-analytic pa\-ram\-e\-trization of the Hitchin component $\Hit(S)$ by the interior of a convex polytope of dimension $2(g-1)(n^2-1)$. 
\end{thm}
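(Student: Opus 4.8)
The plan is to assemble the triangle and shear invariants into a single real-analytic map $\Phi\colon\Hit(S)\to\R^N$, to identify its image with the interior of the convex polytope $\mathcal P$ carved out by the closed-leaf equalities and inequalities, and then to prove that $\Phi$ is a real-analytic diffeomorphism onto $\mathcal P^{\circ}$. Here $N=(n-1)\bigl(2(g-1)(n+1)+s\bigr)$, obtained by adding the $\frac12(n-1)(n-2)$ invariants of each of the $4(g-1)$ triangles of $S-\lambda$ to the $n-1$ invariants of each of the $6(g-1)+s$ leaves of $\lambda$.

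\emph{Step 1: the map and its target.} The flag curve attached to a Hitchin representation by Labourie's Anosov property varies real-analytically with $\rho$, and each invariant is a logarithm of a ratio of minors extracted from finitely many of its values; the Fock--Goncharov positivity property makes these ratios positive, so $\Phi$ is well defined and real-analytic on $\Hit(S)$. The closed-leaf identities place $\Phi\bigl(\Hit(S)\bigr)$ in the affine subspace $A\subset\R^N$ defined by the $s(n-1)$ equalities, and a dimension count gives $\dim A=N-s(n-1)=2(g-1)(n^2-1)$. The $s(n-1)$ linear inequalities cut a convex polytope $\mathcal P$ out of $A$; for a Hitchin representation they hold strictly, since on the boundary the holonomy around the relevant closed leaf of $\lambda$ would acquire a repeated eigenvalue, contradicting the Anosov property. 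Hence $\Phi\bigl(\Hit(S)\bigr)\subseteq\mathcal P^{\circ}$, and $\mathcal P$ is genuinely a polytope of dimension $2(g-1)(n^2-1)$: it is nonempty, as it contains the image of a Fuchsian representation, and the inequalities, being open, do not lower the dimension of $A$.

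\emph{Step 2: reconstruction, hence injectivity and surjectivity.} The heart of the argument is to reverse the construction of the invariants. Passing to the universal cover, fix a base triangle $T_0$ of $\widetilde S-\widetilde\lambda$. Given a point of $\mathcal P^{\circ}$, its triangle invariants at $T_0$ determine, by the Fock--Goncharov classification of generic triples of flags modulo $\PSL$, a triple of flags at the ideal vertices of $T_0$; the shear invariants then prescribe, one leaf at a time, the flag at the opposite vertex of each neighbouring triangle, so that flags can be propagated to the ideal vertices of every triangle of $\widetilde S-\widetilde\lambda$. One must check that this assignment is independent of the chain of triangles used: consistency around the fans of triangles spiralling into the closed leaves of $\lambda$ is precisely what the $s(n-1)$ equalities guarantee, while consistency around the remaining ideal vertices follows from the algebraic identities relating triangle and shear invariants. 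Using positivity of the triangle invariants together with uniform contraction estimates, the resulting map on the dense set of endpoints of leaves of $\widetilde\lambda$ extends to a H\"older-continuous, positive, $\pi_1(S)$--equivariant flag curve, and its equivariance defines a homomorphism $\rho\colon\pi_1(S)\to\PSL$ up to conjugacy whose invariants are, by construction, the chosen point. Applied to a Hitchin representation and its own flag curve this recovers $\rho$ up to conjugacy from its invariants, so $\Phi$ is injective; applied to an arbitrary point of $\mathcal P^{\circ}$ it yields a representation with a positive Anosov flag curve, which by the theorems of Labourie and Fock--Goncharov lies in $\Hit(S)$, so $\Phi$ is onto $\mathcal P^{\circ}$. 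This step is the main obstacle: one has to prove that flags propagated through infinitely many triangles converge to a continuous curve, that the curve is positive and equivariant, and that every consistency relation — especially those around the closed leaves of $\lambda$, which is exactly where the closed-surface case departs from the punctured-surface setting of Fock and Goncharov — is met, together with the estimates needed to make the convergence depend real-analytically on the parameters.

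\emph{Step 3: the diffeomorphism statement.} By Hitchin's Theorem~\ref{thm:Hitchin}, $\Hit(S)$ is a manifold of dimension $2(g-1)(n^2-1)=\dim\mathcal P^{\circ}$, so the continuous bijection $\Phi$ is a homeomorphism by invariance of domain. The reconstruction of Step 2 is built from solving polynomial equations for the flags and from limits that converge locally uniformly and real-analytically, so it furnishes a real-analytic inverse for $\Phi$; equivalently, one verifies directly that $d\Phi$ is injective at every point and invokes the inverse function theorem. Hence $\Phi$ is a real-analytic diffeomorphism from $\Hit(S)$ onto the interior of the convex polytope $\mathcal P$ of dimension $2(g-1)(n^2-1)$, which is the assertion of the theorem.
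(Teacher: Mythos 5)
Your overall architecture (package the invariants into a map $\Phi$ to the polytope cut out by the Closed Leaf Equalities and Inequalities, then reconstruct an inverse by propagating flags through the triangles of $\widetilde S-\widetilde\lambda$) matches the paper, but the decisive step — proving that the reconstructed homomorphism is actually a Hitchin representation — is exactly where your proposal has a genuine gap. You propose to extend the flag assignment from $\partial_\infty\widetilde\lambda$ to a H\"older-continuous, positive, equivariant flag curve ``using uniform contraction estimates'' and then to invoke the theorems of Labourie and Fock--Goncharov; none of this is carried out, you yourself flag it as ``the main obstacle,'' and it amounts to re-proving an Anosov/hyperconvexity property from scratch. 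The paper never does this: it only ever constructs a flag decoration on the countable set $\partial_\infty\widetilde\lambda$, extracts the monodromy homomorphism $\rho\colon\pi_1(S)\to\PGL$ (Lemma~\ref{lem:FlagMapMonodromy}), and then shows $\rho$ is Hitchin by a normalization-plus-connectedness argument (Lemma~\ref{lem:FlagMapMonodromyIsHitchin}): the construction is normalized against a base Hitchin representation $\rho_0$ via Lemma~\ref{lem:IncompleteFlagTripleUnique}, the resulting map $\Psi$ is continuous on $\mathcal P$, $\Psi(P_0)=\rho_0$, $\mathcal P$ is connected, and Hitchin representations form a whole component of the homomorphism space. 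That argument also settles the $\PGL$-versus-$\PSL$ issue for even $n$, which you never address: a priori the reconstruction is only $\PGL$-valued, and without the normalization it could land in the image of $\Hit(S)$ under the outer involution rather than in $\Hit(S)$ itself.

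Two further problems. First, your treatment of closed leaves is off: across a lift $\widetilde c_i$ of a closed leaf there are no adjacent triangles, so ``propagating one leaf at a time'' breaks down there, and the issue is not monodromy around fans (the dual structure on each side is a tree). What the Closed Leaf Equalities actually do, in Lemma~\ref{lem:FlagMapGluedClosedLeaves}, is force the two monodromies of $[c_i]$ computed on either side of $\widetilde c_i$ to have the same eigenvalue ratios (via the length formulas of Proposition~\ref{prop:LengthFunctionOtherInvariants} and Remark~\ref{rem:ComputeEigenvalues}); the Inequalities make those eigenvalues distinct, so that the fixed flags at the endpoints of $\widetilde c_i$ exist; and the transverse arcs $k_i$ — which you never mention, and which are needed even to define $\sigma_a(c_i)$ — together with equivariance pin down the residual conjugation ambiguity in the gluing. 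Second, your invariance-of-domain step presupposes $\dim\mathcal P^{\circ}=2(g-1)(n^2-1)$, i.e.\ linear independence of the $s(n-1)$ Closed Leaf Equalities, which you assert but do not prove; the paper explicitly declines to prove that independence and instead obtains both the homeomorphism and the dimension statement from the explicitly constructed inverse $\overline\Psi$, whose continuity and real-analyticity come for free because each $\rho(\gamma)$ is produced by finitely many algebraic operations on finitely many flags, with no limiting process at all.
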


When $n=2$, there are no triangle invariants and, as indicated earlier, the parametrization of Theorem~\ref{thm:Parametrize} coincides with the now classical parametrization of the Teichm\"uller space $\mathcal T(S)$ by shear coordinates \cite{Thu, Bon}. For general $n$, because $S-\lambda$ consists of $4(g-1)$ triangles, the triangle invariants define a map $\Hit(S) \to \R^{2(g-1)(n-1)(n-2)}$. It turns out that there are global linear relations between these triangle invariants:


\begin{prop}
\label{prop:RelationsTriangleInvariants}
The image of the map  $\Hit(S) \to \R^{2(g-1)(n-1)(n-2)}$ defined by triangle invariants is contained in a linear subspace of codimension  $\lfloor \frac12(n-1) \rfloor$ in $\R^{2(g-1)(n-1)(n-2)}$ (where $\lfloor x \rfloor$ denotes the largest integer  $\leq x$). 
\end{prop}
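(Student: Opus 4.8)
The plan is to derive the global linear relations among the triangle invariants from a single local mechanism: the way triangle invariants interact, through the shear invariants of the closed leaves, across each of the $s$ closed geodesics of $\lambda$. Recall that Fock and Goncharov's triangle invariant attached to a triangle $T$ of $S-\lambda$ and to a "height" $a$ (with $a+b+c = n$, $a,b,c\geq 1$) is defined from the flag curve evaluated at the three vertices of $\widetilde T$, via a triple ratio of the associated osculating flags. The central point will be that, upon gluing two triangles across a closed leaf $g$ of $\lambda$, the edge invariants of $g$ — the $n-1$ shear invariants $\sigma_1(g),\dots,\sigma_{n-1}(g)$ — are governed by the same flags, and a suitable alternating sum of triangle invariants on the two sides equals a linear combination of these shear invariants. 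Summing this identity over all leaves of a closed geodesic, the shear contributions telescope to an expression built only from the holonomy of $\rho$ along that closed geodesic, and then invoking the $n-1$ linear equalities satisfied by the shear invariants of each closed leaf (the equalities mentioned just before Theorem~\ref{thm:Parametrize}, stated as the only relations among shears) produces $\lfloor \frac12(n-1)\rfloor$ independent linear conditions on the triangle invariants alone.

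Concretely, I would proceed as follows. \emph{First}, set up the combinatorial bookkeeping: orient each closed leaf of $\lambda$ and record, for each of the $4(g-1)$ triangles, which edges abut which closed or infinite leaves; the infinite leaves will drop out of the final count because their shear invariants are unconstrained in the closed-surface case exactly as in Fock-Goncharov, so they can be chosen freely to absorb the corresponding triangle contributions. \emph{Second}, prove the local gluing identity: across a single closed leaf, the relevant combination of triangle invariants on both sides is congruent, modulo the span of the shear invariants of that leaf, to $0$; this is a computation with flags in $\R^n$ and their exterior powers, using the cyclic symmetry and the sign behavior of the triple ratio under permuting the three flags of a triangle. \emph{Third}, assemble these local identities around each closed geodesic and use the linear equalities among the shear invariants of its leaves to eliminate the shears; the arithmetic of how many independent surviving relations there are is where the floor function $\lfloor \frac12(n-1)\rfloor$ enters — it reflects the rank of the relevant system after one notices a symmetry $a \leftrightarrow n-a$ (equivalently, the duality $\R^n \leftrightarrow (\R^n)^*$) that pairs up half of the constraints and leaves $\lfloor\frac12(n-1)\rfloor$ of them. \emph{Fourth}, check that the resulting $\lfloor\frac12(n-1)\rfloor$ linear functionals on $\R^{2(g-1)(n-1)(n-2)}$ are linearly independent, so that the image genuinely lies in a subspace of that exact codimension and no smaller.

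The main obstacle I expect is the \emph{third} step: correctly tracking how the per-leaf shear equalities combine around a closed geodesic without over- or under-counting, and identifying precisely why the symmetry collapses $n-1$ naive relations down to $\lfloor\frac12(n-1)\rfloor$ genuine ones. The local gluing identity in the \emph{second} step is a finite flag computation and should be routine once the normalization conventions for the triangle and shear invariants are fixed; the risk there is purely bookkeeping of signs and of which index $a$ pairs with which. The \emph{fourth} step (independence) should follow by exhibiting, for each of the claimed relations, an explicit choice of triangle invariants — supported near a single closed geodesic — on which that relation is nonzero while the others vanish, which is possible because the relations attached to distinct closed geodesics involve disjoint sets of triangles.
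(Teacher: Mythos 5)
Your overall direction --- computing holonomy eigenvalue ratios around the closed leaves from triangle and shear data, and exploiting the $a\leftrightarrow n-a$ symmetry --- is the right one (this is the content of Proposition~\ref{prop:LengthFunctionOtherInvariants}), but the mechanism you propose for eliminating the shear invariants does not work, and it leads you to a wrong picture of what the final relations are. The identity you can extract at a single closed leaf $c_i$ (equating the $c_i^\Right$ and $c_i^\Left$ expressions for $\ell_a^\rho(c_i)-\ell_{n-a}^\rho(c_i)$) still contains the shear invariants of all the infinite leaves spiraling into $c_i$, and these cannot be eliminated ``using the linear equalities of that leaf'': the Closed Leaf Equalities are not relations among shears alone (they mix shear and triangle invariants), and, more fundamentally, each infinite leaf couples two closed leaves, since its two ends generally spiral around different closed geodesics. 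Likewise, ``choosing the shears freely to absorb the triangle contributions'' cannot produce an identity valid for every Hitchin representation; in the closed-surface case the shears are in fact constrained. The actual cancellation is global: after antisymmetrizing in $a\leftrightarrow n-a$, each end of an infinite leaf $g_k$ contributes $\pm\bigl(\sigma_a^\rho(g_k)-\sigma_{n-a}^\rho(g_k)\bigr)$, with opposite signs at the two ends, so the shear terms cancel only once one sums over all closed leaves and both sides; the lengths cancel between the two sides, and only the triangle terms survive.

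Consequently the relations are not ``attached to closed geodesics'': for each $a$ there is a single relation
$$
\sum_{j}\ \sum_{v_j}\Bigl(\sum_{b+c=n-a}\tau_{abc}^\rho(T_j,v_j)-\sum_{b+c=a}\tau_{(n-a)bc}^\rho(T_j,v_j)\Bigr)=0,
$$
summing over all $4(g-1)$ triangles and all three vertices, and the relations for $a$ and $n-a$ coincide up to sign (the one for $a=n/2$ being vacuous), which is where $\lfloor\frac12(n-1)\rfloor$ comes from --- in total, not per closed geodesic. Your proposed independence argument therefore rests on a false premise: there are no separate relations for distinct closed geodesics, and in any case the families of triangles spiraling into distinct closed leaves are far from disjoint (for the lamination of Figure~\ref{fig:GeodLam2} every triangle spirals into the single closed leaf). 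Independence is instead immediate: for distinct pairs $\{a,n-a\}$ the relations involve disjoint sets of coordinates $\tau_{a'b'c'}$ (distinguished by the first index), so they are linearly independent.
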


The existence of constraints for the triangle invariants was somewhat unexpected to us. They can be explained by a more conceptual approach that uses the length functions of \cite{Dre}, combined with a homological argument; see  \cite{BonDre}. In fact, the abstract proof of \cite{BonDre} preceded the explicit computational argument that we give in the current article.

\section{Generic triples and quadruples of flags}
\label{sect:GenericFlagTriplesQuad}

The construction of our invariants of Hitchin representations heavily relies on finite collections  of flags in $\R^n$.

\subsection{Flags}
\label{subsect:Flags}

A \emph{flag} in $\R^n$ is a family $F$ of nested linear subspaces $F^{(0)} \subset F^{(1)}\subset \dots\subset F^{(n-1)} \subset F^{(n)}$ of $\R^n$ where each $F^{(a)}$ has dimension $a$. 

A pair of flags $(E,F)$ is \emph{generic} if every subspace $E^{(a)}$ of $E$ is transverse to every subspace $F^{(b)}$ of $F$. This is equivalent to the property that $E^{(a)}\cap F^{(n-a)}=0$ for every $a$. 

Similarly, a triple of flags $(E,F,G)$ is \emph{generic} if each triple of subspaces $E^{(a)}$, $F^{(b)}$, $G^{(c)}$, respectively  in $E$, $F$, $G$, meets transversely. Again, this is equivalent to the property that $E^{(a)} \cap F^{(b)} \cap G^{(c)} =0$ for every $a$, $b$, $c$ with $a+b+c=n$. 

\subsection{Triple ratios of generic flag triples}
\label{subsect:TripleRatios}

Elementary linear algebra shows that, for any two generic flag pairs $(E,F)$ and $(E', F')$, there is a linear isomorphism $\R^n \to \R^n$ sending $E$ to $E'$ and $F$ to $F'$. However, the same is not true for generic flag triples. Indeed, there is a whole moduli space of generic flag triples modulo the action of $\PSL$, and this moduli space can be parametrized by invariants that we now describe. These invariants are expressed in terms of the exterior algebra $\Lambda (\R^n)$ of $\R^n$. 

Let $(E,F,G)$ be a generic flag triple.  For each $a$, $b$, $c$ between $0$ and $n$, the spaces $ \Lambda^a\bigl(E^{(a)}\bigr)$, $\Lambda^b\bigl(F^{(b)}\bigr)$ and $\Lambda^c\bigl(G^{(c)}\bigr)$ are all isomorphic to $\R$. Choose non-zero elements   $ e^{(a)}  \in \Lambda^a\bigl(E^{(a)}\bigr)$,  $ f^{(b)}  \in \Lambda^b\bigr(F^{(b)}\bigr)$ and  $ g^{(c)} \in \Lambda^c\bigl(G^{(c)}\bigr)$. We will use the same letters to denote their images $ e^{(a)} \in \Lambda^a(\R^n)$, $ f^{(b)} \in \Lambda^b(\R^n)$ and $ g^{(c)} \in \Lambda^c (\R^n )$.

Given integers  $a$, $b$, $c\geq 1$ with $a+b+c=n$, we then define
the \emph{$(a,b,c)$--triple ratio}  of the generic flag triple $(E,F,G)$ as the number
\begin{align*}
T_{abc} (E,F,G) &=
\frac
{ e^{(a+1)} \wedge  f^{(b)} \wedge  g^{(c-1)}}
{ e^{(a-1)} \wedge  f^{(b)} \wedge  g^{(c+1)}}
\\
&\qquad\qquad\qquad\qquad
\frac
{e^{(a)} \wedge  f^{(b-1)} \wedge  g^{(c+1)}}
{ e^{(a)} \wedge  f^{(b+1)} \wedge  g^{(c-1)}}\ 
\frac
{e^{(a-1)} \wedge  f^{(b+1)} \wedge  g^{(c)}}
{ e^{(a+1)} \wedge  f^{(b-1)} \wedge  g^{(c)}}
\end{align*}
where each of the six wedge products are elements of  $\Lambda^n(\R^n) \cong \R$. The fact that the flag triple $(E,F,G)$ is generic guarantees that these wedge products are non-zero, so that the three ratios make sense. Also, because all the spaces  $  \Lambda^{a'}\bigl(E^{(a')}\bigr)$,  $ \Lambda^{b'}\bigr(F^{(b')}\bigr)$ and  $  \Lambda^{c'}\bigl(G^{(c')}\bigr)$ involved in the expression are isomorphic to $\R$, this triple ratio is independent of the choice of the non-zero elements $ e^{(a')}  \in \Lambda^{a'}\bigl(E^{(a')}\bigr)$,  $ f^{(b')}  \in \Lambda^{b'}\bigr(F^{(b')}\bigr)$ and  $ g^{(c')} \in \Lambda^{c'}\bigl(G^{(c')}\bigr)$; indeed, each of these elements appears twice in the expression, once in a numerator and once in a denominator. 

The natural action of the linear group $\GL$ on the flag variety $\Flag$ descends to an action of the projective linear group $\PGL$, quotient of $\GL$ by its center $\R^*\Id$ consisting of all non-zero scalar multiples of the identity. Note that the projective special linear group $\PSL$ is equal to $\PGL$ if $n$ is odd, and is an index 2 subgroup of $\PGL$ otherwise.

\begin{prop}
\label{prop:TripRatiosDetermineFlagTriples}
Two generic flag triples $(E,F,G)$ and $(E', F', G')$ are equivalent under the action of $\PGL$ if and only if $ T_{abc} (E,F,G)=T_{abc} (E',F',G')$ for every $a$, $b$, $c\geq 1$ with $a+b+c=n$. 

In addition, for any set of non-zero numbers $t_{abc}\in \R^*$, there exists a generic flag triple $(E,F,G)$ such that $ T_{abc} (E,F,G)=t_{abc}$ for every $a$, $b$, $c\geq 1$ with $a+b+c=n$. 

\end{prop}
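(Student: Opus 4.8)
The plan is to translate the statement into a piece of elementary linear algebra carried on the triangular array of numbers
$$
D_{abc}=e^{(a)}\wedge f^{(b)}\wedge g^{(c)}\in\Lambda^n(\R^n)\cong\R,
$$
indexed by the lattice points $(a,b,c)$ with $a,b,c\geq0$ and $a+b+c=n$, where $\Lambda^n(\R^n)$ has been identified with $\R$ once and for all and the $e^{(a)}$, $f^{(b)}$, $g^{(c)}$ are chosen as in the definition of $T_{abc}$. Genericity of $(E,F,G)$ is precisely the statement that every $D_{abc}$ is non-zero, and rewriting the definition of the triple ratio shows that, for each interior vertex $(a,b,c)$ (one with $a,b,c\geq1$),
$$
T_{abc}(E,F,G)=\frac{D_{a+1,b,c-1}\,D_{a,b-1,c+1}\,D_{a-1,b+1,c}}{D_{a-1,b,c+1}\,D_{a,b+1,c-1}\,D_{a+1,b-1,c}},
$$
the ``hexagonal ratio'' around $(a,b,c)$. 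This makes transparent both that $T_{abc}$ is independent of the chosen $e^{(\cdot)}$, $f^{(\cdot)}$, $g^{(\cdot)}$ and that it is $\PGL$--invariant, since a linear map acts on $\Lambda^n(\R^n)$ through its determinant, which cancels between the three numerators and the three denominators.

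The first real step is to recover the flag triple from its array. Because $\PGL$ acts transitively on generic flag pairs, I may assume that $E$ and $F$ are the standard and anti-standard flags of a fixed basis $(v_1,\dots,v_n)$, and then take $e^{(a)}=v_1\wedge\dots\wedge v_a$ and $f^{(b)}=v_{n-b+1}\wedge\dots\wedge v_n$; the remaining ambiguity is the diagonal torus $T\cong(\R^*)^{n-1}$, and $D_{abc}$ becomes, up to a sign depending only on $(a,b,c)$, the ``block minor'' of $G^{(c)}$ equal to the coefficient of $v_{a+1}\wedge\dots\wedge v_{a+c}$ in $g^{(c)}$. Since all block minors are non-zero, a triangular recursion reconstructs $G$: the line $G^{(1)}$ is read off from the entries with $c=1$, and, knowing $G^{(c-1)}$, one computes the coordinates of a new basis vector of $G^{(c)}$ by a linear recursion whose successive pivots are the (non-zero) block minors of $g^{(c-1)}$. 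Run in reverse, the same recursion builds a bona fide generic flag $G$ out of \emph{any} array of non-zero numbers. Thus the assignment $(E,F,G)\mapsto(D_{abc})$ is a bijection between, on one side, generic flags $G$ equipped with the scaling choices of $g^{(1)},\dots,g^{(n-1)}$, and on the other side arrays of non-zero numbers; moreover it carries the $T$--action on $G$ to the transformation $D_{abc}\mapsto t_{a+1}\cdots t_{a+c}\,D_{abc}$ of the array. Consequently the $\PGL$--orbit space of generic flag triples is identified with the space of arrays of non-zero numbers modulo two families of transformations: rescaling each level $\{c=\mathrm{const}\}$ of the array, and the torus action just described.

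It remains to show that, on this quotient, the interior hexagonal ratios $\{T_{abc}:a,b,c\geq1\}$ form a complete and free system of coordinates. Restricting first to the locus where every $D_{abc}>0$ and passing to logarithms $x_{abc}=\log D_{abc}$, the two gauge families --- rescaling each level $\{c=\mathrm{const}\}$, and the torus directions $x_{abc}\mapsto x_{abc}+u_{a+1}+\dots+u_{a+c}$ --- span a subspace of dimension $2(n-1)$ (they overlap in exactly one line), so that the number $M$ of block minors satisfies $M-2(n-1)=\frac12(n-1)(n-2)$, which is the number of interior vertices. One checks directly that the $\frac12(n-1)(n-2)$ hexagonal linear forms $x\mapsto\log T_{abc}$ vanish on the gauge subspace, so everything reduces to their \emph{linear independence}. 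I would establish this by listing the interior vertices in an order for which each of them is charged to a block minor that appears in its own hexagonal relation but in none of the relations listed earlier --- for instance the block minor $D_{0,n-1,1}$, which is the $v_1$-coefficient of $g^{(1)}$, occurs only in the single relation $T_{1,n-2,1}$, so one starts from this corner and peels the array inward --- so that the corresponding integer matrix is triangular with diagonal entries $\pm1$. Such a matrix is invertible both over $\Q$ and over $\mathbb F_2$; the second fact disposes of signs, since with the $\pm1$--valued level rescalings and torus elements one can adjust the signs of the $D_{abc}$ to realise any prescribed signs of the $T_{abc}$. Combining this with the reconstruction of the second paragraph then gives the surjectivity onto $(\R^*)^{\frac12(n-1)(n-2)}$ of the second assertion (choose any array realising the prescribed triple ratios, then reconstruct a generic flag triple from it) and the separation of $\PGL$--orbits of the first (if two generic triples have the same triple ratios, their arrays differ by a gauge transformation, hence the flags $G$, $G'$ differ by an element of $T$).

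The main obstacle is exactly this last independence statement together with its $\mathbb F_2$ counterpart --- that the triple ratios obey no relation beyond the trivial ones, and that every pattern of signs occurs. Once the ordering of the interior vertices and the block minors charged to them have been pinned down this is a finite, if somewhat fiddly, combinatorial verification; by contrast the reduction to the standard flag pair, the identification of $T_{abc}$ with a hexagonal ratio, and the triangular reconstruction of $G$ from its block minors are all routine.
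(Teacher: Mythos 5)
The paper offers no in-house argument for this proposition: it simply cites \S 9 of Fock--Goncharov, so there is nothing to compare line by line, and your proposal amounts to a self-contained reconstruction in the same spirit as that source. Its structure is correct: reducing to the standard pair $(E,F)$, encoding $G$ by the consecutive block minors $D_{abc}$, recognizing $T_{abc}$ as the hexagonal Laurent monomial, and identifying the gauge group (level rescalings together with the diagonal torus, overlapping exactly in the scalars, total dimension $2(n-1)$) are all right, and the count $M-2(n-1)=\frac12(n-1)(n-2)$ matches the number of triple ratios. The two steps you flag as the crux do go through. The level-by-level reconstruction is genuinely triangular: normalizing the new vector $w$ to have vanishing first $c-1$ coordinates (possible because the minor of $g^{(c-1)}$ on $\{1,\dots,c-1\}$ is nonzero), the equation for the block $\{a+1,\dots,a+c\}$ has the block minor of $g^{(c-1)}$ on $\{a+1,\dots,a+c-1\}$ as the pivot of the new unknown $w_{a+c}$, so any nonzero array is realized by a unique flag-with-scalings. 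A valid peeling order also exists: list the interior vertices by decreasing $c$, and within equal $c$ by decreasing $b$, charging $(a,b,c)$ to the minor $D_{a+1,b,c-1}$; one checks that the only interior vertices whose hexagons contain this minor, other than $(a,b,c)$ itself, come later in this order, so the resulting integer matrix is triangular with unit diagonal, hence of full rank over $\Q$ and over $\mathbb{F}_2$ (your corner example $D_{0,n-1,1}$, appearing only in $T_{1,n-2,1}$, is the first instance of this). The one sentence that needs repair is the sign discussion: gauge transformations do not change the $T_{abc}$ at all, so one cannot ``adjust the signs of the $D_{abc}$ to realise any prescribed signs of the $T_{abc}$'' by level rescalings and torus elements. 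What your $\mathbb{F}_2$ rank statement actually gives, and what you need, is (i) surjectivity of the induced homomorphism on sign vectors $(\Z/2)^M\to(\Z/2)^{\frac12(n-1)(n-2)}$, which yields arbitrary signs of the $t_{abc}$ for the existence half, and (ii) for the separation of orbits, the supplementary count that the subgroup of sign patterns produced by $\pm1$ level rescalings and $\pm1$ torus elements has $\mathbb{F}_2$--dimension exactly $2(n-1)=M-\frac12(n-1)(n-2)$ (the only coincidence being the scalar $-\mathrm{Id}$), hence exhausts the kernel of the sign map; combined with the identity-component argument in log coordinates this shows the kernel of the monomial map is precisely the gauge subgroup, and the proposition follows as you describe.
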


\begin{proof}
See \cite[\S9]{FoG1}.
\end{proof}

Note the elementary property of triple ratios under permutation of the flags. 

\begin{lem}
\label{lem:PermuteTripleRatios}
$$
 T_{abc} (E,F,G ) =  T_{bca} (F,G, E)= T_{bac} (F,E,G)^{-1}. 
 $$
\vskip-\belowdisplayskip
 \vskip-\baselineskip
 \qed
\end{lem}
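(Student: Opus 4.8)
The plan is to verify the two claimed identities directly from the defining formula for $T_{abc}$, exploiting the symmetry of the six-term expression under cyclic and transposition relabelings of the roles of $E$, $F$, $G$. First I would fix non-zero representatives $e^{(a)} \in \Lambda^a(E^{(a)})$, $f^{(b)} \in \Lambda^b(F^{(b)})$, $g^{(c)} \in \Lambda^c(G^{(c)})$; since the triple ratio is independent of these choices, any convenient normalization is harmless.

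For the cyclic identity $T_{abc}(E,F,G) = T_{bca}(F,G,E)$, I would write out $T_{bca}(F,G,E)$ from the definition, which is the product of three ratios of wedge products of the form (roughly) $f^{(b\pm 1)} \wedge g^{(c)} \wedge e^{(a)}$ and the two cyclic shifts thereof. The key observation is that the six wedge products appearing in $T_{bca}(F,G,E)$ are, term by term, exactly the six appearing in $T_{abc}(E,F,G)$ after reordering the three wedge factors back into the order $E,F,G$: a cyclic reordering of three factors in $\Lambda^n(\R^n)$ multiplies the element by the sign of a $3$-cycle, which is $+1$. Hence each of the six wedge products is unchanged, and since they are matched up in the same numerator/denominator positions (the defining expression is manifestly invariant under simultaneously cycling $(a,b,c)$ and $(E,F,G)$), the two triple ratios agree. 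This is essentially a bookkeeping check that the pattern of shifts $\pm 1$ in the exponents is itself cyclically symmetric.

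For the transposition identity $T_{abc}(E,F,G) = T_{bac}(F,E,G)^{-1}$, I would similarly expand $T_{bac}(F,E,G)$ and reorder each wedge factor into $E,F,G$ order. Now each wedge product involves a single transposition of two of the three factors, which introduces a sign $-1$ in $\Lambda^n(\R^n) \cong \R$; but every wedge product is a factor in both a numerator and a denominator somewhere in the full expression, so all these signs cancel and may be ignored at the level of the ratio. What remains is to see that the six wedge products of $T_{bac}(F,E,G)$, rewritten in $E,F,G$ order, are exactly the six wedge products of $T_{abc}(E,F,G)$ but with numerators and denominators interchanged; tracking the exponent shifts under $(a,b,c) \mapsto (b,a,c)$ and the swap of the first two flags shows precisely this inversion, giving $T_{bac}(F,E,G) = T_{abc}(E,F,G)^{-1}$.

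The main obstacle — though it is more a matter of care than of difficulty — is the indexing bookkeeping: one must keep straight which of the three ratios in the defining product maps to which under each relabeling, and confirm that the $\pm 1$ shifts on the exponents transform correctly. Once the matching of the six wedge products is set up carefully (and one notes that sign issues from reordering wedge factors are irrelevant because every term occurs once up and once down), both identities fall out immediately. No further input — not even Proposition 1.6 — is needed; this is purely a symmetry property of the formula itself.
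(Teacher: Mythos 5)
Your proposal is correct and is essentially the argument the paper has in mind: the lemma carries no proof precisely because it is this direct, term-by-term comparison of the six wedge products in the defining formula under the relabelings. One small caveat: cyclically reordering wedge factors of degrees $p$, $q$, $r$ is governed by the Koszul rule (a sign $(-1)^{pq}$ for each adjacent swap of blocks), not by the sign of the $3$--cycle, so that step is not automatically sign-free; however, the cancellation you invoke for the transposition case applies here too, since the multiset of signs picked up in the three numerators equals that in the three denominators, so both identities hold as claimed.
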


\subsection{Quadruple ratios of generic flag triples}
\label{subsect:QuadRatios}

In addition to triple ratios, a similar type of  invariants of generic flag triples will play an important r\^ole in our analysis of Hitchin representations. 

For an integer $a$ with $1\leq a \leq n-1$, the \emph{$a$--th quadruple ratio} of the generic flag triple $(E,F,G)$ is the number
\begin{align*}
Q_a(E, F, G)
&= 
\frac
{e^{(a-1)}\wedge f^{(n-a)} \wedge g^{(1)}}
{e^{(a)}\wedge f^{(n-a-1)} \wedge g^{(1)} }\
\frac
{e^{(a)}\wedge f^{(1)} \wedge g^{(n-a-1)}}
{e^{(a-1)}\wedge f^{(1)} \wedge g^{(n-a)}}
  \\
  &\qquad\qquad\qquad\qquad \qquad \qquad\qquad
  \frac
{ e^{(a+1)}\wedge f^{(n-a-1)} }
{ e^{(a+1)}\wedge g^{(n-a-1)} }
\
\frac
{e^{(a)}\wedge g^{(n-a)} }
{e^{(a)}\wedge f^{(n-a)}  }
 \end{align*}
 where, as before, we consider arbitrary non-zero elements $ e^{(a')}  \in \Lambda^{a'}\bigl(E^{(a')}\bigr)$,  $ f^{(b')}  \in \Lambda^{b'}\bigr(F^{(b')}\bigr)$ and  $ g^{(c')} \in \Lambda^{c'}\bigl(G^{(c')}\bigr)$, and where the ratios are  computed in $\Lambda^n (\R^n) \cong \R$. As with triple ratios, the number $Q_a(E, F, G)\in \R^*$ is well-defined, independent of choices, and invariant under the action of $\PGL$ on the set of generic flag triples.

 Note that $
 Q_a(E,G,F)) = Q_a(E,F,G)^{-1}
 $, but that this quadruple ratio usually does not behave well under the other permutations of the flags $E$, $F$ and $G$, as  $E$ plays a special r\^ole in $Q_a(E,F,G)$.  
 
 By Proposition~\ref{prop:TripRatiosDetermineFlagTriples}, a generic flag triple $(E,F,G)$ is completely determined by its triple ratios modulo the action of the linear group $\GL$. It is therefore natural to expect that the quadruple ratio can be expressed in terms of the triple ratios of $(E,F,G)$. This is indeed the case, and the corresponding expression is particularly simple.

\begin{lem}
\label{lem:QuadTripleRatios}
For $a=1$, $2$, \dots, $n-1$,
$$
Q_a(E,F,G) = \prod_{b+c=n-a} T_{abc}(E,F,G)
$$
where the product is over all integers $b$, $c\geq 1$ with $b+c=n-a$. In particular, $Q_{n-1}(E,F,G) = 1$ and $Q_{n-2}(E,F,G) = T_{(n-2)11}(E,F,G)$. 
\end{lem}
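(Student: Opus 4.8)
The plan is a direct computation in $\Lambda^n(\R^n)\cong\R$, reducing $Q_a(E,F,G)$ to a telescoping product of triple ratios. First I would fix non-zero elements $e^{(a')}\in\Lambda^{a'}\bigl(E^{(a')}\bigr)$, $f^{(b')}\in\Lambda^{b'}\bigl(F^{(b')}\bigr)$, $g^{(c')}\in\Lambda^{c'}\bigl(G^{(c')}\bigr)$ once and for all, and write everything in terms of these. The key observation is that each factor in the definition of $T_{abc}(E,F,G)$ is a ratio of two wedge products of the form $e^{(i)}\wedge f^{(j)}\wedge g^{(k)}$ with $i+j+k=n$, and that in the product $\prod_{b+c=n-a}T_{abc}$ the exponents $b$, $c$ run over a one-dimensional family; so the product should collapse by cancellation of all intermediate terms, leaving only the ``boundary'' wedge products, which are exactly the six appearing in $Q_a(E,F,G)$ (note in particular that the last four factors of $Q_a$ involve only two flags, which arise as the extreme cases $c=0$ or $b=0$ of the wedge $e^{(a)}\wedge f^{(b)}\wedge g^{(c)}$, since $\Lambda^a(E^{(a)})\wedge\Lambda^{n-a}(F^{(n-a)})$ is a nonzero multiple of $e^{(a)}\wedge f^{(n-a)}$ when $b+c=n-a$ with one of them zero).

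Concretely, I would index the factors of $T_{abc}$ by writing, for fixed $a$ and for $b$, $c\geq 1$ with $b+c=n-a$,
\begin{align*}
T_{abc}(E,F,G)
&=\frac{e^{(a+1)}\wedge f^{(b)}\wedge g^{(c-1)}}{e^{(a-1)}\wedge f^{(b)}\wedge g^{(c+1)}}\cdot
\frac{e^{(a)}\wedge f^{(b-1)}\wedge g^{(c+1)}}{e^{(a)}\wedge f^{(b+1)}\wedge g^{(c-1)}}\cdot
\frac{e^{(a-1)}\wedge f^{(b+1)}\wedge g^{(c)}}{e^{(a+1)}\wedge f^{(b-1)}\wedge g^{(c)}}.
\end{align*}
Taking the product over $b=1,\dots,n-a-1$ (with $c=n-a-b$), the middle ratio telescopes in the variable $b$: the denominator $e^{(a)}\wedge f^{(b+1)}\wedge g^{(c-1)}$ at index $b$ cancels the numerator $e^{(a)}\wedge f^{(b-1)}\wedge g^{(c+1)}$ at index $b+2$ (after re-indexing), leaving only the extreme terms with $f$-degree $0$, $1$, $n-a-1$, $n-a$, which give the factors $\dfrac{e^{(a)}\wedge f^{(1)}\wedge g^{(n-a-1)}}{e^{(a)}\wedge f^{(n-a)}}$ and $\dfrac{e^{(a)}\wedge g^{(n-a)}}{e^{(a)}\wedge f^{(n-a-1)}\wedge g^{(1)}}$ up to grouping. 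Similarly the first ratio (which raises $e$-degree and lowers $g$-degree) telescopes along $b$ against the third ratio (which lowers $e$-degree and raises $f$-degree), and the surviving boundary terms are $e^{(a-1)}\wedge f^{(n-a)}\wedge g^{(1)}$, $e^{(a+1)}\wedge f^{(n-a-1)}\wedge g^{(0)}=e^{(a+1)}\wedge f^{(n-a-1)}$, $e^{(a+1)}\wedge g^{(n-a-1)}$ and $e^{(a-1)}\wedge f^{(1)}\wedge g^{(n-a)}$. Collecting these surviving factors and matching them against the definition of $Q_a(E,F,G)$ term by term gives the identity. The two special cases then follow by inspection: for $a=n-1$ the product is empty, so $Q_{n-1}=1$; for $a=n-2$ the only pair is $b=c=1$, giving $Q_{n-2}=T_{(n-2)11}$.

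The main obstacle is purely bookkeeping: one must set up the indexing of the telescoping carefully so that the correct six boundary wedge products survive with the correct signs — i.e.\ that the surviving product is literally equal to the right-hand side and not merely equal up to sign or up to reordering of the three flags. I would handle this by organizing the six types of wedge products appearing in $Q_a$ into three ``telescoping chains'' (one for each of the three ratios in $T_{abc}$, viewed as a function of $b$), verifying that consecutive terms in each chain cancel, and then reading off the two endpoints of each chain; a small auxiliary check, perhaps for $n=4$ or $n=5$, would confirm that the endpoints are being matched to the right factors of $Q_a$. No genericity issue arises beyond what is already built into the definitions of $T_{abc}$ and $Q_a$, since all wedge products in sight are the ones guaranteed nonzero by genericity of $(E,F,G)$.
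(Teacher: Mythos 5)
Your proposal is correct and is essentially the paper's own proof: the paper simply asserts that in the product $\prod_{b+c=n-a}T_{abc}(E,F,G)$ most wedge-product terms cancel, leaving the eight terms of $Q_a(E,F,G)$, which is exactly the telescoping cancellation you spell out (and since every surviving or cancelling term $e^{(a')}\wedge f^{(b')}\wedge g^{(c')}$ appears with its factors in the same order, no sign issues actually arise).
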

\begin{proof}
When computing the right-hand side of the equation, most terms $e^{(a')} \wedge f^{(b')} \wedge g^{(c')}$  cancel out and we are left with the eight terms of $Q_a(E,F,G)$.
\end{proof}

\subsection{Double ratios of generic flag quadruples}
\label{subsect:DoubleRatios}

We now consider quadruples $(E,F,G,G')$ of flags $E$, $F$, $G$, $G'\in \Flag$.  Such a flag quadruple is \emph{generic} if each quadruple of subspaces $E^{(a)}$, $F^{(b)}$, $G^{(c)}$, $G'{}^{(d)}$ meets transversely. As usual, we can restrict attention to the cases where $a+b+c+d=n$.

For $1\leq a\leq n-1$, the \emph{$a$--th double ratio} of the generic flag quadruple $(E,F,G,G')$ is
$$
D_a (E,F,G,G') = -
\frac
{ e^{(a)} \wedge  f^{(n-a-1)}\wedge  g^{(1)}}
{ e^{(a)} \wedge  f^{(n-a-1)}\wedge  g'^{(1)}}\ 
\frac
{ e^{(a-1)} \wedge  f^{(n-a)}\wedge  g'^{(1)}}
{ e^{(a-1)} \wedge  f^{(n-a)}\wedge  g^{(1)}}
$$
where we choose arbitrary non-zero elements $ e^{(a')} \in \Lambda^{a'}(E^{(a')})$, $ f^{(b')} \in \Lambda^{b'}(F^{(b')})$, $ g^{(1)} \in \Lambda^1(G^{(1)})$ and $g'{}^{(1)}\in \Lambda^1(G'{}^{(1)})$. As usual,  $D_a (E,F,G,G') $ is independent of these choices. The minus sign is motivated by the notion of positivity that is described in \S \ref{subsect:Positivity} and plays a very important r\^ole in this article (see Proposition~\ref{prop:FlagMap}). 

\begin{lem}
\label{lem:RelationsDoubleRatios}
\begin{align*}
 D_a(E,F,G', G) &= D_a(E,F,G,G')^{-1} \\\text { and } D_a(F,E,G,G') &= D_{n-a}(E,F,G,G')^{-1}.
\end{align*}
\vskip-\belowdisplayskip
 \vskip-\baselineskip
 \qed
\end{lem}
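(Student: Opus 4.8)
The plan is to derive both identities directly from the definition of the $a$--th double ratio, exploiting the fact that the same non-zero multivectors $e^{(a')}$, $f^{(b')}$, $g^{(1)}$, $g'^{(1)}$ can be used throughout (since $D_a$ is independent of the choices) and that each wedge product lies in $\Lambda^n(\R^n) \cong \R$, where multiplication is commutative.

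For the first identity, I would simply swap the roles of $G$ and $G'$ in the defining formula
$$
D_a (E,F,G,G') = -
\frac
{ e^{(a)} \wedge  f^{(n-a-1)}\wedge  g^{(1)}}
{ e^{(a)} \wedge  f^{(n-a-1)}\wedge  g'^{(1)}}\
\frac
{ e^{(a-1)} \wedge  f^{(n-a)}\wedge  g'^{(1)}}
{ e^{(a-1)} \wedge  f^{(n-a)}\wedge  g^{(1)}},
$$
and observe that replacing $g^{(1)}$ by $g'^{(1)}$ and vice versa turns each factor into the reciprocal of the corresponding factor of $D_a(E,F,G,G')$; since the overall minus sign satisfies $-1 = (-1)^{-1}$, we get $D_a(E,F,G',G) = D_a(E,F,G,G')^{-1}$.

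For the second identity I would again go back to the definition, now swapping $E$ and $F$. One writes out $D_a(F,E,G,G')$ using the convention that the letters $e$, $f$ attached to a flag name are its associated multivectors: the first flag argument gets multivectors of degrees $a$ and $a-1$, the second gets degrees $n-a-1$ and $n-a$, and $G$, $G'$ contribute degree-$1$ vectors. The key step is to compare the resulting four wedge products with those appearing in $D_{n-a}(E,F,G,G')$: in both, $E$ contributes multivectors of degrees $n-a$ and $n-a-1$ while $F$ contributes degrees $a$ and $a-1$, so after reordering the factors inside each $\Lambda^n(\R^n)$ element (which only affects things by a sign that is the same in every term and therefore cancels in the ratio) the two expressions become reciprocals of each other, again using $-1 = (-1)^{-1}$ for the sign in front.

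The only genuinely delicate point is the bookkeeping of the reordering signs when one permutes the factors $e^{(\cdot)}$, $f^{(\cdot)}$, $g^{(1)}$, $g'^{(1)}$ inside a top-degree wedge product; one must check that the sign picked up depends only on the \emph{degrees} of the multivectors being transposed and not on which specific flag they came from, so that it is constant across the numerator and denominator of the relevant ratio and drops out. Since in each of the four fractions of $D_a(F,E,G,G')$ versus $D_{n-a}(E,F,G,G')$ the multidegrees being permuted are identical, this constancy holds, and no residual sign survives. This is a routine computation, so I would carry it out silently and simply state the result, which is why the lemma is presented without proof.
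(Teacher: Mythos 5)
Your proposal is correct: both identities follow exactly as you describe from the defining formula, with the reordering signs depending only on the degrees of the multivectors and hence cancelling within each ratio, and the front sign satisfying $-1=(-1)^{-1}$. The paper states this lemma without proof precisely because it is this routine verification, so your argument is essentially the intended one.
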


\subsection{Positivity}
\label{subsect:Positivity}

A flag triple $(E,F,G)$ is \emph{positive} if it is generic and if all its triple ratios $T_{abc}(E,F,G)$ are positive. By Proposition~\ref{prop:TripRatiosDetermineFlagTriples}, positive flag triples form a component in the space of all generic flag triples. Lemma~\ref{lem:PermuteTripleRatios} also shows that positivity of the triple $(E,F,G)$ is preserved under all permutations of the flags $E$, $F$ and $G\in \Flag$. 

A generic flag quadruple $(E,F,G,G')$ is \emph{positive} if it is generic, if the two triples $(E,F,G)$ and $(E,F,G')$ are positive, and if all double ratios $D_a (E,F,G,G') $ are positive. 

See \cite{FoG1, Lusz2, Lusz1} for a more conceptual and general definition of positivity, valid for $k$--tuples of  flags.

\section{Invariants of Hitchin representations}
\label{sect:HitchinRepsInvariants}

We now define several invariants of Hitchin representations $\rho \colon \pi_1(S) \to \PSL$. 
These invariants require that we are given a certain topological information on the surface.

\subsection{The topological data}
\label{subsect:TopData}

Since we are going to use the terminology of geodesic laminations, it is convenient to endow the surface $S$ with a riemannian metric of negative curvature. However, it is well-known that geodesic laminations can also be defined in a metric independent way, and in particular are purely topological objects. See for instance \cite{Thu0, PenH, BonLam}.

Let $\lambda$ be a maximal geodesic lamination with finitely many leaves. Namely $\lambda$ is the union of finitely many disjoint simple closed geodesics $c_1$, $c_2$, \dots, $c_s$ and of finitely many disjoint bi-infinite geodesics $ g_1$, $ g_2$, \dots, $ g_t$ in the complement of the $c_i$, in such a way that each end of $ g_j$ spirals along some $c_i$, and that the complement $S-\lambda$ consists of finitely many infinite triangles $T_1$, $T_2$, \dots, $T_u$.

An Euler characteristic argument shows that, if $g$ is the genus of the surface $S$, then the number $u$ of  components of $S-\lambda$ is equal to $4(g-1)$, while the number $t$ of infinite leaves of $\lambda$ is equal to $6(g-1)$. The number $s$ of closed leaves of $\lambda$ can be any integer between $1$ and $3(g-1)$. 

For instance, $\lambda$ can be obtained from a family of disjoint simple closed curves $c_1$, $c_2$, \dots, $c_{3g-3}$ decomposing $S$ into pairs of pants, and then by decomposing each pair of pants into 2 infinite triangles along 3 infinite geodesics spiraling around the boundary. Figure~\ref{fig:GeodLam1} describes one such example associated with a pair of pants decomposition, and Figure~\ref{fig:GeodLam2} shows another example with only one closed leaf.

\begin{figure}[htbp]

\includegraphics{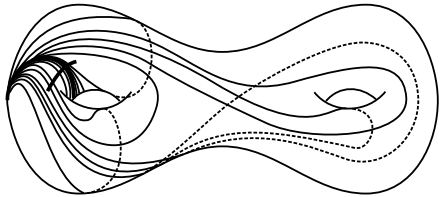}
\caption{A finite geodesic lamination with exactly one closed leaf}
\label{fig:GeodLam2}
\end{figure}

We need more data,  in addition to the finite-leaved maximal geodesic lamination $\lambda$. One is the choice of an orientation on each leaf of $\lambda$. This choice is completely free and arbitrary. In particular, we are not making any assumption of continuity  regarding these orientations, or of a relationship between these orientations and the directions in which infinite leaves spiral around closed leaves. 

Finally, for each closed leaf $c_i$, we choose an arc $k_i$ that is transverse to $\lambda$, cuts $c_i$ in exactly one point, and meets no other closed leaf $c_j$. 

For the reader who is familiar with the case where $n=2$, we can indicate that the choice of orientations for the leaves of $\lambda$ is irrelevant in that case. Regarding the need for the transverse arcs $k_i$, it is analogous to a well-known technical difficulty in the definition of the Fenchel-Nielsen coordinates: the twist parameters are relatively easy to define modulo the length parameters, but require more cumbersome topological information to be well-defined as real numbers.

\subsection{The flag curve of a Hitchin representation}
\label{subsect:FlagMap}

Once we are given this topological data, the key tool for the construction of our invariants is the Anosov structure for Hitchin representations discovered by  F. Labourie \cite{Lab1}.  Another good reference for Labourie's work is \cite{Gui}. 

We begin with what is actually a corollary of these results. 

\begin{prop}[Labourie {\cite{Lab1}}]
\label{prop:HitchinLoxodromic}
Let $\rho \colon \pi_1(S) \to \PSL$ be a Hitchin representation. Then, for every non-trivial $\gamma \in \pi_1(S)$, the element $\rho(\gamma)\in \PSL$ has real eigenvalues and their absolute values are distinct. \qed
\end{prop}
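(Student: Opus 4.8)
The plan is to deduce the statement from Labourie's Anosov property for Hitchin representations \cite{Lab1}; that property is the deep input, and the rest is a translation of it to the periodic orbits of the geodesic flow. Recall that, for a Hitchin representation $\rho$, the Anosov property produces a $\rho$--equivariant continuous flag curve $\xi\colon\partial_\infty\pi_1(S)\to\Flag$ such that $\bigl(\xi(x),\xi(y)\bigr)$ is a generic flag pair whenever $x\neq y$, together with an exponential contraction statement for the geodesic flow acting on the associated flat $\R^n$--bundle $\mathcal E_\rho$ over the unit tangent bundle of $S$; in particular $\mathcal E_\rho$ splits, flow--invariantly and continuously, as a sum of $n$ line subbundles, and the flow contracts the $a$--th one relative to the $(a+1)$--th one at a uniform exponential rate.

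First I would fix a non-trivial $\gamma\in\pi_1(S)$ and exploit the North--South dynamics of its action on $\partial_\infty\pi_1(S)\cong S^1$: there are an attracting fixed point $\gamma^+$ and a repelling fixed point $\gamma^-$. By equivariance of $\xi$, the flags $E=\xi(\gamma^+)$ and $F=\xi(\gamma^-)$ are fixed by $\rho(\gamma)$, and the pair $(E,F)$ is generic, so $E^{(a)}\cap F^{(n-a)}=0$ for every $a$. Hence the lines $\ell_a=E^{(a)}\cap F^{(n-a+1)}$ give a direct sum decomposition $\R^n=\ell_1\oplus\dots\oplus\ell_n$, with $E^{(a)}=\ell_1\oplus\dots\oplus\ell_a$ and $F^{(b)}=\ell_{n-b+1}\oplus\dots\oplus\ell_n$. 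Since $\rho(\gamma)$ preserves both flags, it preserves every line $\ell_a$; choosing a lift to $\SL$, the element $\rho(\gamma)$ is therefore diagonalizable over $\R$, with a real eigenvalue $\lambda_a\in\R^*$ on each $\ell_a$. The absolute values $|\lambda_a|$ do not depend on the choice of lift, so this already makes sense for $\rho(\gamma)\in\PSL$.

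Next I would show that the $|\lambda_a|$ are pairwise distinct. The closed geodesic of $S$ freely homotopic to $\gamma$ is a periodic orbit of the geodesic flow, of period the translation length $\ell(\gamma)$, and the holonomy of $\mathcal E_\rho$ around it is conjugate to $\rho(\gamma)$, with the $n$ line subbundles of the Anosov splitting restricting over the orbit to conjugates of the lines $\ell_a$. Applying the contraction estimate to this periodic orbit gives a uniform $c>0$ with $|\lambda_{a+1}/\lambda_a|\leq\E^{-c\,\ell(\gamma)}<1$ for $1\leq a\leq n-1$, whence $|\lambda_1|>|\lambda_2|>\dots>|\lambda_n|$. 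Equivalently, and perhaps more concretely, one can argue that $\rho(\gamma)$ must be proximal on the projective space of $\R^n$: since $\rho(\gamma)^m z$ converges to the line $\xi^{(1)}(\gamma^+)$ for $z$ outside a proper subspace, $\rho(\gamma)$ has a simple real eigenvalue of strictly maximal modulus; running the same argument in each exterior power $\Lambda^a(\R^n)$, with the flag curve induced there, forces all the partial products $|\lambda_1\cdots\lambda_a|$ to be simple, hence all the $|\lambda_a|$ distinct.

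The only serious obstacle is Labourie's theorem itself, which I am treating as a black box; granting it, the work lies entirely in relating the abstract contraction property to periodic orbits and, in particular, in squeezing out a \emph{strict} separation of eigenvalue moduli rather than mere real diagonalizability. I would also mention a softer packaging that does not quite work on its own: the set of $\rho$ for which every $\rho(\gamma)$, $\gamma\neq 1$, has real eigenvalues of distinct moduli is open in the character variety and contains the Fuchsian representations coming from the irreducible $\mathrm{PSL}_2(\R)\to\PSL$ (for which $\rho(\gamma)$ has eigenvalues $\mu^{n-1},\mu^{n-3},\dots,\mu^{-(n-1)}$ with $\mu=\E^{\ell(\gamma)/2}>1$), so connectedness of $\Hit(S)$ would finish the argument --- except that the missing closedness statement is exactly as deep as Labourie's result.
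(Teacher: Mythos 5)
Your proposal is correct, and there is essentially nothing in the paper to compare it against: the paper states this proposition as Labourie's theorem, with a citation to \cite{Lab1} and no proof, exactly as you treat it (``the deep input''). Your deduction from the Anosov property is the standard one and is sound: transversality of the fixed flags $E=\xi(\gamma^+)$ and $F=\xi(\gamma^-)$ gives the $\rho(\gamma)$--invariant line decomposition $\ell_a=E^{(a)}\cap F^{(n-a+1)}$, hence real eigenvalues, and the uniform contraction over the periodic orbit of the geodesic flow associated with $\gamma$ gives the strict inequalities $|\lambda_{a+1}|<|\lambda_a|$. One small caution on your alternative ``proximality'' packaging: the convergence $\rho(\gamma)^m z\to\xi^{(1)}(\gamma^+)$ is not a consequence of the mere existence and equivariance of the flag curve --- it is itself equivalent to the dominated/contraction property you are trying to establish --- so that variant still rests on the same dynamical input and should not be presented as a shortcut; likewise, your closing remark correctly identifies why the openness-plus-connectedness argument cannot replace Labourie's theorem (the paper uses that softer argument only for the weaker sign statement in its Lemma on positive eigenvalues).
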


When $n$ is even, the eigenvalues of $\rho(\gamma) \in \PSL=\SL/\{\pm\Id\}$ are only defined up to sign. However, we can be a little more specific. 

\begin{lem}
\label{lem:HitchinLoxodromicPositiveEigenvalues}
Let $\rho \colon \pi_1(S) \to \PSL$ be a Hitchin representation. Then, for every non-trivial $\gamma \in \pi_1(S)$, the element $\rho(\gamma)\in \PSL$ admits a lift $\rho(\gamma)'\in \SL$ whose eigenvalues are distinct and all positive. 
\end{lem}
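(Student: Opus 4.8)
The plan is to reduce the statement to a connectedness argument along the Hitchin component, using the fact that the conclusion is visibly true at a Fuchsian point and is an open-and-closed condition. First I would fix a non-trivial $\gamma \in \pi_1(S)$ and consider the function on $\Hit(S)$ that records the signs of the eigenvalues of $\rho(\gamma)$. By Proposition~\ref{prop:HitchinLoxodromic}, for every $\rho$ in the Hitchin component the element $\rho(\gamma) \in \PSL$ has real eigenvalues of distinct absolute value; lifting $\rho(\gamma)$ to $\widetilde{\rho(\gamma)} \in \SL$ (the lift is unique up to sign when $n$ is even, and canonical when $n$ is odd), these eigenvalues $\mu_1, \dots, \mu_n$ are real, nonzero, and pairwise distinct in absolute value, so they can be unambiguously ordered as $|\mu_1| > |\mu_2| > \dots > |\mu_n|$ and their signs $\epsilon_i = \mathrm{sign}(\mu_i) \in \{\pm 1\}$ are well-defined up to simultaneously flipping all of them (the ambiguity coming from the choice of lift when $n$ is even). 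Because the $|\mu_i|$ stay distinct throughout $\Hit(S)$, these eigenvalues depend continuously — indeed real-analytically — on $\rho$, and hence the sign vector $(\epsilon_1, \dots, \epsilon_n)$, taken up to global flip, is a locally constant function on the connected space $\Hit(S)$.

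Next I would evaluate this invariant at a preferred point of $\Hit(S)$, namely one coming from the Teichmüller component via the irreducible representation $\mathrm{PSL}_2(\R) \to \PSL$. If $\rho_0 \colon \pi_1(S) \to \mathrm{PSL}_2(\R)$ is a Fuchsian representation, then $\rho_0(\gamma)$ is hyperbolic, so it lifts to an element of $\mathrm{SL}_2(\R)$ conjugate to $\mathrm{diag}(\ell, \ell^{-1})$ with $\ell > 1$ (one always chooses the lift with \emph{positive} eigenvalues). Under the $n$-dimensional irreducible representation — the action on degree-$(n-1)$ homogeneous polynomials in two variables — this diagonal element acts with eigenvalues $\ell^{n-1}, \ell^{n-3}, \dots, \ell^{-(n-1)}$, all of which are positive. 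Therefore at this Fuchsian point the sign vector is $(+1, +1, \dots, +1)$. Since the invariant is locally constant on the connected set $\Hit(S)$ and equals (up to global flip) the all-$+$ vector at one point, it equals the all-$+$ vector at every point of $\Hit(S)$; in particular, at every $\rho \in \Hit(S)$ the loxodromic element $\rho(\gamma)$ admits a lift $\rho(\gamma)' \in \SL$ all of whose eigenvalues are positive (and distinct, by Proposition~\ref{prop:HitchinLoxodromic}).

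The main obstacle I anticipate is the bookkeeping around the two-to-one cover $\SL \to \PSL$ when $n$ is even: one must check that "the set of signs, up to global sign flip" genuinely descends to a well-defined locally constant function on $\Hit(S)$, i.e. that there is no monodromy obstruction to choosing the signs continuously in a punctured neighborhood of a point, and that the identification "up to global flip" is exactly the indeterminacy introduced by the choice of lift. This is handled by noting that the map $\rho \mapsto \widetilde{\rho(\gamma)}$ can be made continuous on any simply connected open subset of $\Hit(S)$ (choose a continuous lift of the $\PSL$-valued map, possible since $\SL \to \PSL$ is a covering and the domain is simply connected), and on overlaps two such local lifts differ by $\pm\Id$, which either preserves all signs or flips them all; hence the flip-class of the sign vector is globally well-defined and continuous, therefore constant. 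A secondary point is to confirm that distinctness of the $|\mu_i|$ over all of $\Hit(S)$ really does force the ordered eigenvalues to vary continuously — this is the standard fact that simple roots of a polynomial with continuously varying coefficients vary continuously, applied to the characteristic polynomial of $\widetilde{\rho(\gamma)}$.
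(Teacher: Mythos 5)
Your proposal is correct and follows essentially the same route as the paper: a locally constant (open-and-closed) sign condition on the eigenvalues, combined with connectedness of $\Hit(S)$ and the explicit computation at a Fuchsian point, where the irreducible representation gives eigenvalues $a^{n-2k+1}>0$. The only cosmetic difference is that you track the full sign vector up to global flip, while the paper tracks the coarser property ``all eigenvalues have the same sign'' and then replaces the lift $\rho(\gamma)'$ by $-\rho(\gamma)'$ when $n$ is even; the two bookkeeping schemes are equivalent.
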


\begin{proof}
Let the non-trivial element $\gamma\in \pi_1(S)$ be fixed. 

The property ``all eigenvalues of a lift $\rho(\gamma)'\in \SL$ of $\rho(\gamma)\in \PSL$ have the same sign'' is open and closed in the space of Hitchin representations, since these eigenvalues are real and non-zero. This property holds in the special case where $\rho$ is the composition of a Teichm\"uller representation $\rho_2 \colon \pi_1(S) \to \mathrm{PSL}_2(\R)$ with the natural embedding $\mathrm{PSL}_2(\R) \to \PSL$; indeed, if $\rho_2(\gamma)$ has a lift $\rho_2(\gamma)'\in \mathrm{SL}_2(\R)$ with eigenvalues $a$ and $a^{-1}$, then $\rho(\gamma)$ has a lift $\rho(\gamma)'\in \SL$ with eigenvalues $a^{n-2k+1}$ as $k$ ranges over all integers with $1\leq k \leq n$. Therefore, the property  holds for every Hitchin representation by connectedness of the  Hitchin component.

This proves that  the eigenvalues of any lift  $\rho(\gamma)'\in \SL$ of $\rho(\gamma)\in \PSL$ have the same sign. If these eigenvalues are all negative, note that   $n$ is even since $\rho(\gamma)'$ has determinant $+1$. Then $-\rho(\gamma)'\in \SL$ is another lift of $\rho(\gamma)\in \PSL$, whose eigenvalues are all positive (and distinct by Proposition~\ref{prop:HitchinLoxodromic}).  
\end{proof}

If $\rho$ is a Hitchin representation and if $\gamma \in \pi_1(S)$ is non-trivial, let $\rho(\gamma)'\in \SL$ be the lift of $\rho(\gamma) \in \PSL$ given by Lemma~\ref{lem:HitchinLoxodromicPositiveEigenvalues}. Let
$$
m_1^\rho(\gamma) > m_2^\rho(\gamma) > \dots > m_n^\rho(\gamma) >0
$$
be the eigenvalues of $\rho(\gamma)'$, indexed in decreasing order. Since these eigenvalues are distinct, $\rho(\gamma)'$ is diagonalizable. Let $L_a$ be the (1--dimensional) eigenspace corresponding to the eigenvalue $m_a^\rho(\gamma) $. 

This associates to $\rho(\gamma)$ two preferred flags $E$, $F\in \Flag$ defined by the property that
$$
E^{(a)} = \bigoplus_{b=1}^a L_b \ \text{ and }\  F^{(a)} = \kern -5pt \bigoplus_{b=n-a+1}^n \kern -5pt L_b.
$$
By definition, $E$ is the \emph{stable flag} of $\rho(\gamma)\in \PSL$, and $F$ is its \emph{unstable flag}. 

Let $\widetilde S$ be the universal covering of the surface $S$, and let $\partial_\infty \widetilde S$ be its circle at infinity. Recall that every non-trivial $\gamma \in \pi_1(S)$ fixes two points of $\partial_\infty \widetilde S$, one of them attracting and the other one repelling. 

\begin{prop}[Labourie, Fock-Goncharov]
\label{prop:FlagMap}

Given a Hitchin representation $\rho \colon \pi_1(S) \to \PSL$, there exists 
 a unique continuous map $\mathcal F_\rho \colon \partial_\infty \widetilde S \to \Flag$ such that:

 \begin{enumerate}
 
 \item if $x\in \partial_\infty \widetilde S$ is the attracting fixed point of $\gamma\in \pi_1(S)$, then $\mathcal F_\rho(x) \in \Flag$ is the stable flag of $\rho(\gamma) \in \PSL$; 
 
 \item $\mathcal F_\rho$ is equivariant with respect to the Hitchin homomorphism $\rho\colon \pi_1(S) \to \PSL$, in the sense that $\mathcal F_\rho(\gamma x)= \rho(\gamma)(x)$ for every $\gamma \in \pi_1(S)$ and every $x \in \partial _\infty \widetilde S$;
 
\item for any two distinct points $x$, $y \in  \partial_\infty \widetilde S$, the flag pair $\bigl( \mathcal F_\rho(x),  \mathcal F_\rho(y) \bigr)$ is generic;

\item for any three distinct points $x$, $y$, $z \in  \partial_\infty \widetilde S$, the flag triple $\bigl( \mathcal F_\rho(x), \mathcal F_\rho(y) , \mathcal F_\rho(z)\bigr)$ is positive;

\item for any four distinct points $x$, $y$, $z $, $z'$ occurring in this order around the circle at infinity $  \partial_\infty \widetilde S$, the flag quadruple $\bigl( \mathcal F_\rho(x), \mathcal F_\rho(y) , \mathcal F_\rho(z), \mathcal F_\rho(z') \bigr)$ is positive. \qed

\end{enumerate}

\end{prop}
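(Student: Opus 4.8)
The plan is to construct the flag curve $\mathcal F_\rho$ by first defining it on the dense subset of $\partial_\infty\widetilde S$ consisting of fixed points of non-trivial elements of $\pi_1(S)$, and then showing that this partially-defined map extends continuously to all of $\partial_\infty\widetilde S$. On the dense subset, property (1) forces the definition: send the attracting fixed point of $\gamma$ to the stable flag of $\rho(\gamma)$. One first checks this is consistent (if $x$ is simultaneously the attracting fixed point of $\gamma$ and of $\gamma^k$, or of conjugates, the stable flags agree because $\rho(\gamma)$ and $\rho(\gamma^k)$ share eigenspaces) and that it automatically satisfies the equivariance (2), since $\gamma'\gamma\gamma'^{-1}$ has attracting fixed point $\gamma' x$ and stable flag $\rho(\gamma')\bigl(\text{stable flag of }\rho(\gamma)\bigr)$. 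Uniqueness of $\mathcal F_\rho$ is then immediate: any continuous map satisfying (1) is determined on a dense set, hence everywhere.

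The substance is the continuous extension, and this is where the Anosov property of Labourie \cite{Lab1} (see also \cite{Gui}) does the real work. The Anosov structure provides, along the geodesic flow on the unit tangent bundle $T^1S$ lifted to $T^1\widetilde S$, a continuous $\rho$-equivariant splitting of the associated flat bundle into line subbundles with exponential contraction/expansion rates that are uniformly separated. Concretely, this yields a continuous $\rho$-equivariant family of flags parametrized by $T^1\widetilde S$ — equivalently, since the relevant data only depends on the endpoints of the geodesic, a continuous equivariant map from the space of ordered pairs of distinct points in $\partial_\infty\widetilde S$ to pairs of transverse flags. Taking the ``stable'' member of this pair and checking it depends only on the forward endpoint (the uniform contraction forces the stable flag to be insensitive to the backward endpoint) produces the desired continuous map $\mathcal F_\rho\colon\partial_\infty\widetilde S\to\Flag$, and by construction it agrees with the eigenflag prescription at fixed points of group elements, so it satisfies (1) and (2). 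I would either cite this packaging of Labourie's theorem directly or spend a paragraph extracting it from the Anosov condition; the honest statement is that this is the one genuinely nontrivial input.

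It then remains to establish the positivity/genericity statements (3), (4), (5). Here the strategy is again density plus a limiting argument, following Fock–Goncharov \cite{FoG1}. For finitely many points that are all fixed points of group elements, one can find a single element or a configuration whose dynamics let one compute the triple ratios and double ratios of the associated eigenflags explicitly — in the base case of a Fuchsian (Teichm\"uller) representation composed with the principal $\mathrm{SL}_2(\R)\to\PSL$, these ratios are positive by a direct computation using the weights $a^{n-2k+1}$ appearing in the proof of Lemma~\ref{lem:HitchinLoxodromicPositiveEigenvalues}. Positivity is an open condition (a nonvanishing continuous function keeping its sign), and by connectedness of the Hitchin component it persists for all Hitchin representations, so (3)–(5) hold whenever $x,y,z,z'$ are fixed points. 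Finally one passes to arbitrary distinct points in $\partial_\infty\widetilde S$ by density and continuity of $\mathcal F_\rho$: genericity is closed-complement (the non-generic locus is a proper subvariety), and one must rule out that a limit of positive configurations degenerates to a non-generic one — this is prevented by the uniform transversality built into the Anosov splitting (distinct endpoints stay uniformly far apart under the flow, so the limiting flags cannot collapse). I expect this last non-degeneracy point — upgrading genericity from the dense set to all distinct triples/quadruples without it degenerating — to be the main obstacle, and it is precisely what the uniform (as opposed to merely pointwise) nature of the Anosov estimates is there to handle.
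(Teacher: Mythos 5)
The paper does not actually prove this proposition: it is quoted from Labourie and Fock--Goncharov, with properties (1)--(3) attributed to the Anosov structure of \cite{Lab1} and the positivity properties (4)--(5) to \cite{FoG1} (see also the hyperconvexity property of \cite{Lab1, Gui}). Your outline for (1)--(3) --- define $\mathcal F_\rho$ on the dense set of attracting fixed points, obtain the continuous equivariant extension from the Anosov splitting, get uniqueness from density --- is consistent with that attribution and is acceptable as a packaging/citation of Labourie's theorem.

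The gap is in your treatment of (4)--(5), at exactly the spot you yourself flag as the main obstacle. You claim that the non-degeneracy needed to pass from fixed-point configurations to arbitrary ones ``is prevented by the uniform transversality built into the Anosov splitting.'' But the Anosov splitting only gives transversality of the stable and unstable flags at the two endpoints of a geodesic, i.e.\ property (3) (pairwise genericity). Genericity of triples and quadruples --- the condition $E^{(a)}\cap F^{(b)}\cap G^{(c)}=0$ for $a+b+c=n$, which is needed even to define the triple and double ratios --- is the hyperconvexity (Frenet) property, a substantive theorem of Labourie (and Guichard), not a formal consequence of uniformity of the Anosov estimates. Your connectedness argument also uses it silently in two places: to know that, for fixed points $x,y,z$, the triple ratios are nonvanishing continuous functions of $\rho$ on the whole Hitchin component (so that their sign is constant and can be propagated from the Fuchsian locus), and to conclude strict positivity rather than mere non-negativity after the limiting argument from the dense set. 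Without this input the argument fails; with it (or with a direct citation of \cite{FoG1}, as the paper makes) it can be completed, but hyperconvexity must be acknowledged as a second genuinely nontrivial input alongside the Anosov property rather than folded into it.
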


By definition, this curve  $\mathcal F_\rho \colon \partial_\infty \widetilde S \to \Flag$ is the \emph{flag curve} of the Hitchin representation $\rho \colon \pi_1(S) \to \PSL$. 

The first three properties  of Proposition~\ref{prop:FlagMap} are immediate consequences of the Anosov structure of \cite{Lab1}. The  positivity properties of the last two conditions of Proposition~\ref{prop:FlagMap} were proved by Fock and Goncharov \cite{FoG1}; see also the hyperconvexity property of \cite{Lab1, Gui}. 

\subsection{Invariants of triangles}
\label{subsect:TriangleInvariants}

Given a finite maximal geodesic lamination $\lambda$ as in \S \ref{subsect:TopData} and a Hitchin representation $\rho \colon \pi_1(S) \to \PSL$, the  first set of invariants of $\rho$ is associated with the components of the complement $S-\lambda$. Recall that each of these components is an ideal triangle.

Consider such a triangle  $T_j$, and select one of its vertices $v_j$.  (Such a vertex is of course not an actual point of the surface $S$; we let the reader devise a formal definition for a vertex of the ideal triangle $T_j \subset S$.) Lift $T_j$ to  an ideal triangle $\widetilde T_j$ in the universal covering $\widetilde S$, and let $\widetilde v_j \in \partial_\infty \widetilde S$ be the vertex of $\widetilde T_j$ corresponding to the vertex $v_j$ of $T_j$.  Label the vertices of $\widetilde T_j$ as $\widetilde v_j$, $\widetilde v_j'$ and $\widetilde v_j'' \in \partial_\infty \widetilde S$ in clockwise order around $\widetilde T_j$. Then, by Proposition~\ref{prop:FlagMap},  the  flag triple $\bigl( \mathcal F_\rho(\widetilde v_j), \mathcal F_\rho(\widetilde v_j') , \mathcal F_\rho(\widetilde v_j'')\bigr)$ is positive, and we can  consider the  logarithms
$$
\tau_{abc}^\rho (T_j, v_j) = \log T_{abc}
\bigl( \mathcal F_\rho(\widetilde v_j), \mathcal F_\rho(\widetilde v_j') , \mathcal F_\rho(\widetilde v_j'')\bigr)
$$
to the triple ratios of this flag triple, defined for  every $a$, $b$, $c\geq 1$ with $a+b+c=n$. By $\rho$--equivariance of the flag curve $\mathcal F_\rho$, these triple ratio   logarithms depend only on the triangle $T_j$ and on the vertex $v_j$ of $T_j$, and not on  the choice of the lift $\widetilde T_j$.

Lemma~\ref{lem:PermuteTripleRatios} indicates how the invariant $\tau_{abc}^\rho (T_i, v_j)  \in \R$ changes if we choose a different vertex of the triangle $T_j$. 

\begin{lem}
\label{lem:RotateTirangleInvariants}
If $v_j$, $v_j'$ and $v_j''$ are the vertices of $T_j$, indexed clockwise around $T_j$, then
$$
\tau_{abc}^\rho (T_i, v_j)  = \tau_{bca}^\rho (T_i, v_j')  =\tau_{cab}^\rho (T_i, v_j'') .
$$
\vskip-\belowdisplayskip
\vskip - \baselineskip
\qed
\end{lem}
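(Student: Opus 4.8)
The plan is to reduce the statement entirely to the already-recorded permutation behaviour of triple ratios in Lemma~\ref{lem:PermuteTripleRatios}, together with the fact that cyclically relabelling the vertices of the ideal triangle $T_j$ induces a cyclic permutation of the associated flag triple. First I would fix the clockwise labelling $v_j$, $v_j'$, $v_j''$ of the vertices of $T_j$ and lift to $\widetilde T_j \subset \widetilde S$ with vertices $\widetilde v_j$, $\widetilde v_j'$, $\widetilde v_j''$ in clockwise order, so that by definition
\[
\tau_{abc}^\rho(T_j, v_j) = \log T_{abc}\bigl(\mathcal F_\rho(\widetilde v_j), \mathcal F_\rho(\widetilde v_j'), \mathcal F_\rho(\widetilde v_j'')\bigr).
\]
The point is that the \emph{same} lift $\widetilde T_j$ serves to compute $\tau^\rho(T_j, v_j')$: when we choose $v_j'$ as the distinguished vertex, the clockwise labelling starting at $v_j'$ is $v_j'$, $v_j''$, $v_j$, so $\tau_{abc}^\rho(T_j, v_j') = \log T_{abc}\bigl(\mathcal F_\rho(\widetilde v_j'), \mathcal F_\rho(\widetilde v_j''), \mathcal F_\rho(\widetilde v_j)\bigr)$, and similarly $\tau_{abc}^\rho(T_j, v_j'') = \log T_{abc}\bigl(\mathcal F_\rho(\widetilde v_j''), \mathcal F_\rho(\widetilde v_j), \mathcal F_\rho(\widetilde v_j')\bigr)$. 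This uses the $\rho$--equivariance of $\mathcal F_\rho$ from Proposition~\ref{prop:FlagMap}(2) to see that the invariant does not depend on the chosen lift, which was already observed in the text just before Lemma~\ref{lem:RotateTirangleInvariants}.

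Next I would apply the cyclic identity $T_{abc}(E,F,G) = T_{bca}(F,G,E)$ from Lemma~\ref{lem:PermuteTripleRatios}, with $(E,F,G) = \bigl(\mathcal F_\rho(\widetilde v_j), \mathcal F_\rho(\widetilde v_j'), \mathcal F_\rho(\widetilde v_j'')\bigr)$. This gives
\[
T_{abc}\bigl(\mathcal F_\rho(\widetilde v_j), \mathcal F_\rho(\widetilde v_j'), \mathcal F_\rho(\widetilde v_j'')\bigr) = T_{bca}\bigl(\mathcal F_\rho(\widetilde v_j'), \mathcal F_\rho(\widetilde v_j''), \mathcal F_\rho(\widetilde v_j)\bigr),
\]
and the right-hand side is exactly $\exp\bigl(\tau_{bca}^\rho(T_j, v_j')\bigr)$ by the computation in the previous paragraph. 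Taking logarithms yields $\tau_{abc}^\rho(T_j, v_j) = \tau_{bca}^\rho(T_j, v_j')$. Applying the cyclic identity once more (or applying it to the triple starting at $\widetilde v_j'$) gives $\tau_{bca}^\rho(T_j, v_j') = \tau_{cab}^\rho(T_j, v_j'')$, completing the chain of equalities. The positivity of the relevant flag triples, guaranteed by Proposition~\ref{prop:FlagMap}(4), ensures the logarithms are all defined.

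There is essentially no obstacle here: the only thing requiring care is bookkeeping of the clockwise cyclic order and matching the index shift $(a,b,c) \mapsto (b,c,a)$ in Lemma~\ref{lem:PermuteTripleRatios} with the vertex shift $v_j \mapsto v_j'$ in the correct direction. One should double-check that the orientation conventions are consistent — i.e. that advancing the distinguished vertex one step clockwise corresponds to the permutation sending $(E,F,G)$ to $(F,G,E)$ rather than $(G,E,F)$ — but this is forced by the definition of $\tau^\rho$ given above Lemma~\ref{lem:RotateTirangleInvariants}, since that definition always reads the vertices clockwise starting from the chosen one. No use is made of the transversality arcs $k_i$, the orientations of the leaves, or any global feature of $\lambda$; the statement is purely local to a single triangle.
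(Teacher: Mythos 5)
Your proof is correct and is exactly the paper's (essentially unwritten) argument: the paper states the lemma with no proof beyond the remark that it follows from Lemma~\ref{lem:PermuteTripleRatios}, i.e.\ the cyclic identity $T_{abc}(E,F,G)=T_{bca}(F,G,E)$ applied to the flag triple read clockwise from the distinguished vertex, which is precisely what you do.
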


\subsection{Shear invariants of infinite leaves}
\label{subsect:InfiniteLeafInvariants}

Let $ g_j$ be an infinite leaf of $\lambda$. 

Lift $ g_j$ to a leaf $\widetilde  g_j$ of the preimage $\widetilde\lambda$ of $\lambda$ in the universal covering $\widetilde S$. This leaf is isolated in $\widetilde\lambda$, and is adjacent to two components $\widetilde T$ and $\widetilde T'$ of the complement $\widetilde S - \widetilde\lambda$. Choose the notation so that $\widetilde T$ and $\widetilde T'$ are respectively to the left and to the right of $\widetilde g_j$ for the orientation of $\widetilde g_j$ coming from the orientation  of $ g_j$ that is part of the topological data of \S \ref{subsect:TopData}. 

Let $x$ and $y\in \partial_\infty \widetilde S$ be the positive and negative end points of $\widetilde  g_j$. Let $z$, $z'\in \partial_\infty \widetilde S$ be the third vertices of $\widetilde T$ and $\widetilde T'$, respectively, namely the vertices of these triangles that are neither $x$ nor $y$. See Figure~\ref{fig:ShearInfinite}. Consider the flags $E=\mathcal F_\rho(x)$, $F=\mathcal F_\rho(y)$, $G=\mathcal F_\rho(z)$ and $G'=\mathcal F_\rho(z')$ associated to these vertices by the flag curve $\mathcal F_\rho \colon \partial_\infty \widetilde S \to \Flag$. 

For $1\leq a \leq n-1$, we can now consider the double ratio $D_a(E, F, G, G')$ as in \S \ref{subsect:DoubleRatios}. This double ratio is positive by Proposition~\ref{prop:FlagMap}. The  \emph{$a$--th shear invariant} of the Hitchin homomorphism $\rho$ along the oriented leaf $ g_j$ is then defined as
the logarithm
$$
\sigma_a^\rho( g_j) = \log D_a(E, F, G, G').
$$
This invariant $\sigma_a( g_j) $ is clearly independent of the choice of the lift $\widetilde  g_j$ of the leaf $ g_j$ to $\widetilde S$, by $\rho$--equivariance of the flag curve $\mathcal F_\rho$. 

By Lemma~\ref{lem:RelationsDoubleRatios},  reversing the orientation of $ g_j$ replaces $\sigma_a^\rho( g_j) $ by $\sigma_{n-a}^\rho( g_j) $.

\begin{figure}[htbp]

\SetLabels
( .6*.58 ) $\widetilde g_j $ \\
(.53 *.67 ) $\widetilde T $ \\
( .63* .37) $\widetilde T'$ \\
( 1*.63 ) $x $ \\
( 0* .46) $ y$ \\
( .42*1 ) $ z$ \\
\T( .63*0.02 ) $z' $ \\
\endSetLabels
\centerline{\AffixLabels{ \includegraphics{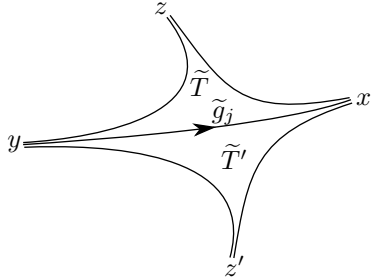} }}

\caption{The construction of shear invariants of infinite leaves}
\label{fig:ShearInfinite}
\end{figure}

\subsection{Shear invariants of closed leaves}
\label{subsect:ClosedLeafInvariant}

The shear invariants of a closed leaf $c_i$ are defined in very much the same way as for infinite leaves, except that we need to use the transverse arc $k_i$ that is part of the topological data to single out two triangles $T$ and $T'$ that are located on either side of $c_i$.

More precisely, let $\widetilde c_i$ be a component of the pre-image of $c_i$ in the universal cover $\widetilde S$, and orient it by the orientation of $c_i$. Lift the arc $k_i$ to an arc $\widetilde k_i$ that meets $\widetilde c_i$ in one point. Let $\widetilde T$ and $\widetilde T'$ be the two triangle components of $\widetilde S - \widetilde \lambda$ that contain the end points of $\widetilde k_i$, in such a way that $\widetilde T$ and $\widetilde T'$ are respectively to the left and to the right of $\widetilde c_i$ for the orientation of $\widetilde c_i$ lifting the orientation of $c_i$. 

Let $x$ and $y\in \partial_\infty \widetilde S$ be the positive and negative end points of $\widetilde  c_i$, respectively. Among the vertices of $\widetilde T$, let $z \in \partial_\infty \widetilde S$ be the one that is farthest away from $\widetilde c_i$; namely, $z$ is adjacent to the two components of $\widetilde S - \widetilde T$ that do not contain $\widetilde c_i$. Similarly, let  $z'$ be the vertex of $T'$  that is farthest away from $\widetilde c_i$. See Figures~\ref{fig:ShearClosed}(a) and (b) for two of the four possible configurations, according to the directions of the spiraling of infinite leaves around $ c_i$. 

\begin{figure}[htbp]

\SetLabels
( .235*.35 ) $\widetilde k_i $ \\
(.2 *.69 ) $\widetilde T $ \\
( .26* .25) $\widetilde T'$ \\
( .46*.5 ) $x $ \\
( 0* .5) $ y$ \\
( .16*1.01 ) $ z$ \\
\T( .26*0.02 ) $z' $ \\
( .775*.35 ) $\widetilde k_i $ \\
(.735 *.69 ) $\widetilde T $ \\
( .75* .25) $\widetilde T'$ \\
( 1.005*.5 ) $x $ \\
( .53* .5) $ y$ \\
( .695*1.01 ) $ z$ \\
\T( .74*0.02 ) $z' $ \\
( .22*-.2 ) (a) \\
( .8*-.2 ) (b) \\
\endSetLabels
\vskip 5pt
\centerline{\AffixLabels{ \includegraphics{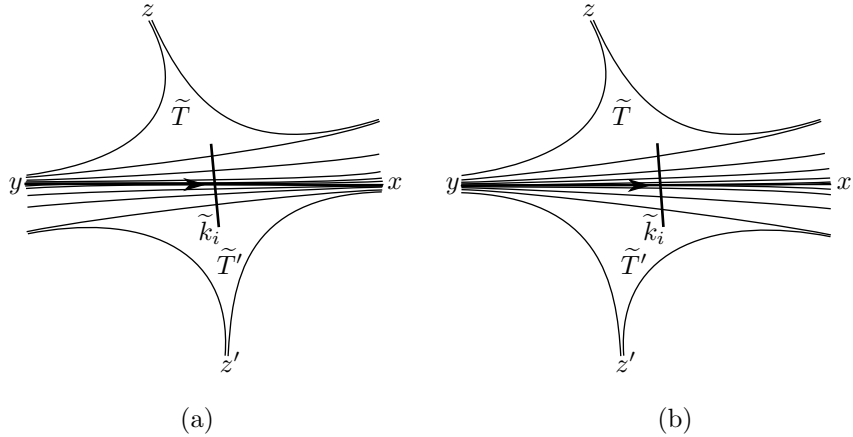} }}
\vskip 20pt

\caption{The construction of shear invariants of closed leaves}
\label{fig:ShearClosed}
\end{figure}

Finally, let $E=\mathcal F_\rho(x)$, $F=\mathcal F_\rho(y)$, $G=\mathcal F_\rho(z)$ and $G'=\mathcal F_\rho(z')$ be the flags associated to these vertices by the flag curve $\mathcal F_\rho \colon \partial_\infty \widetilde S \to \Flag$. 

With this data, we again consider for $1\leq a \leq n-1$  the double ratio $D_a(E, F, G, G')>0$ as in \S \ref{subsect:DoubleRatios} and  Proposition~\ref{prop:FlagMap}. The  \emph{$a$--th shear invariant} of the Hitchin homomorphism $\rho$ along the oriented closed leaf $ c_i$ is defined as
the logarithm
$$
\sigma_a^\rho( c_i) = \log D_a(E, F, G, G').
$$
By $\rho$--equivariance of the flag curve $\mathcal F_\rho$, this shear invariant $\sigma_a( c_i) $ is clearly independent of the choice of the component $\widetilde c_i$ of the preimage of $ c_i$, and of the lift $\widetilde k_i$ of the arc $k_i$. 

Again, Lemma~\ref{lem:RelationsDoubleRatios} shows that  reversing the orientation of $ c_i$ replaces $\sigma_a^\rho( c_i) $ by $\sigma_{n-a}^\rho( c_i) $. 

\subsection{Lengths of closed leaves}
\label{subsect:Lengths}

There are simpler invariants that we could have considered. 

Let $ c$ be an oriented closed curve in $S$ that is not homotopic to 0, and let $[c]\in \pi_1(S)$ be represented by $c$ after connecting this curve to the base point by an arbitrary path. 

If $\rho$ is a Hitchin representation, Lemma~\ref{lem:HitchinLoxodromicPositiveEigenvalues} asserts that $\rho\bigl([c]\bigr)\in \PSL$ admits a lift $\rho\bigl([c]\bigr)'\in \SL$ with distinct real eigenvalues 
$$
m_1^\rho(c) > m_2^\rho(c) > \dots > m_n^\rho(c) >0.
$$

For $1\leq a \leq n-1$, we define the \emph{$a$--th $\rho$--length} $\ell_a^\rho( c)$ of the closed curve $ c$ for the Hitchin representation $\rho \colon \pi_1(S) \to \PSL$ as
$$
\ell_a^\rho( c) =  \log \frac{m_a^\rho( c) }{m_{a+1}^\rho( c)}>0.
$$

Note that this quantity is independent of the class $[c]\in \pi_1(S)$ represented by $c$, and that reversing the orientation of $ c$ replaces $\ell_a^\rho( c) $ by $\ell_{n-a}^\rho( c) $. 

The $\rho$--lengths  $\ell_a^\rho( c_i) $ of the closed leaves $c_i$ of the geodesic lamination $\lambda$ will play an important r\^ole in the next sections. 

\begin{rem}
The reader should beware of a discrepancy between the conventions of this paper and those of  \cite{Dre}: What we call here $\ell_a^\rho( c) $ is called $\ell_a^\rho( c) -\ell_{a+1}^\rho( c) $ in \cite{Dre}. 
There are two reasons for this change in conventions. The main one is that, as we will see in \S \ref{sect:RelationsInvariants}, the $\rho$--lengths $\ell_a^\rho( c_i) $ of the closed leaves of $\lambda$ are related to the triangle and shear invariants of $\rho$, and the expression of this connection is simpler with the current definitions. The second reason comes from the case $n=2$, where the representation $\rho$ defines a hyperbolic metric $m$ on the surface $S$; then there is exactly one  length function and this $\rho$--length $\ell_1^\rho( c) $ is exactly the classical length of the $m$--geodesic that is homotopic to $ c$, which plays a fundamental r\^ole in much of hyperbolic geometry. 

\end{rem}

\section{Relations between invariants}
\label{sect:RelationsInvariants}

Let $ c_i$ be a closed leaf of the geodesic lamination $\lambda$. We will express the $\rho$--lengths $\ell_a( c_i)$ in terms of the triangle invariants $\tau_{abc}^\rho (T_j, v_j)$ and of the shear invariants $\sigma_{n-a}^\rho( g_j) $ of the infinite leaves $ g_j$. 

The closed leaf $ c_i$ has two sides  $ c_i^\Right$ and  $ c_i^\Left$, respectively located to the  right and to the left of $ c_i$ for the chosen orientation of this curve. 

Select one of these sides $ c_i^\Right$ or  $ c_i^\Left$. 
Let   $ g_{i_1}$, $ g_{i_2}$, \dots, $ g_{i_k}$ be the infinite leaves of $\lambda$ that spiral on this side of $ c_i$. An infinite leaf $ g_j$ will appear twice in this list if its two ends spiral on the selected side of $ c_i$. 
We can then consider the shear invariants $\sigma_a^\rho( g_{i_l})\in \R$. 

 Similarly, let $T_{j_1}$, $T_{j_2}$, \dots, $T_{j_k}$ be the components of the complement $S-\lambda$ that spiral on the selected  side $ c_i^\Right$ or  $ c_i^\Left$  of $ c_i$. The spiraling of $T_{j_l}$ around this side occurs in the direction of a vertex $v_l$ of   $T_{j_l}$. We can then consider the triangle invariants $\tau_{abc}(T_{j_l}, v_l)$, as in \S \ref{subsect:TriangleInvariants}.

 We have to worry about orientations, and more precisely about two types of orientation. One is the orientation of each spiraling leaf $ g_{i_l}$. The other is whether the spiraling occurs in the direction of the orientation of $ c_i$ or not.

\begin{prop}
\label{prop:LengthFunctionOtherInvariants}
Select a side $ c_i^\Right$ or  $ c_i^\Left$  of the closed leaf $ c_i$ of $\lambda$. 
Let  $ g_{i_l}$ and $T_{j_l}$, $l=1$, $2$, \dots, $k$ be the infinite leaves and triangles that spiral on this side of $c_i$. Let $v_l$ be the vertex of the  triangle $T_{j_l}$  in the direction of which the spiraling occurs, and consider the triangle invariants $\tau_a^\rho(T_{j_l}, v_l)$ and the shear invariants $ \sigma_a^\rho( g_{i_l})$. Set 
$\overline\sigma_a^\rho( g_{i_l}) = \sigma_a^\rho( g_{i_l})$ if the leaf $ g_{i_l}$ is oriented towards $ c_i$, and $\overline\sigma_a^\rho( g_{i_l}) = \sigma_{n-a}^\rho( g_{i_l})$ if it is oriented away from $ c_i$. Then:

 \begin{enumerate}
\item  if the selected side is the right-hand side $c_i^\Right$  and if the spiraling occurs in the direction of the orientation of~$ c_i$, 
$$\ell_a^\rho( c_i) = \sum_{l=1}^k \overline\sigma_{ a}^\rho( g_{i_l}) +   \sum_{l=1}^k \sum_{b+c=n-a} \tau_{abc}^\rho (T_{j_l}, v_l);$$

\item  if the selected side is the right-hand side $c_i^\Right$  and if the spiraling occurs in the direction opposite to the orientation of~$ c_i$,
$$
 \ell_a^\rho( c_i) 
 =- \sum_{l=1}^k
\overline
\sigma_{n-a}^\rho ( g_{i_l})
- \sum_{l=1}^k \sum_{b+c=a} \tau_{(n-a)bc}^\rho (T_{j_l}, v_l);
$$

\item  if the selected side is the left-hand side $c_i^\Left$ and if the spiraling occurs in the direction of the orientation of~$ c_i$,
$$\ell_a^\rho( c_i) = -\sum_{l=1}^k \overline\sigma_{ a}^\rho( g_{i_l}) -   \sum_{l=1}^k \sum_{b+c=n-a} \tau_{abc}^\rho (T_{j_l}, v_l);$$

\item  if the selected side is the left-hand side $c_i^\Left$  and if the spiraling occurs in the direction opposite to the orientation of $ c_i$,
$$
 \ell_a^\rho( c_i) 
 = \sum_{l=1}^k
\overline
\sigma_{n-a}^\rho ( g_{i_l})
+ \sum_{l=1}^k \sum_{b+c=a} \tau_{(n-a)bc}^\rho (T_{j_l}, v_l).
$$

\end{enumerate}

\end{prop}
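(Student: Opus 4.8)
The plan is to reduce the identity to a computation inside a single "fan" of triangles spiraling onto the closed leaf $c_i$, using the algebraic relationship between double ratios, triple ratios, and the eigenvalue ratios $m_a^\rho(c_i)/m_{a+1}^\rho(c_i)$. First I would set up the picture in the universal cover: fix a lift $\widetilde c_i$ of $c_i$ oriented by the chosen orientation, let $\gamma\in\pi_1(S)$ be the element translating along $\widetilde c_i$ in the positive direction, and note that $\rho(\gamma)'\in\SL$ has the eigenvalues $m_1^\rho(c_i)>\dots>m_n^\rho(c_i)>0$ with stable flag $E=\mathcal F_\rho(x)$ at the attracting endpoint $x$ and unstable flag $F=\mathcal F_\rho(y)$ at the repelling endpoint $y$. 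The key observation is that the infinite leaves spiraling onto the selected side of $\widetilde c_i$, together with the triangles they bound, form a $\gamma$-equivariant family of vertices $\dots, z_{-1}, z_0, z_1, \dots$ on the arc of $\partial_\infty\widetilde S$ on that side, accumulating on $x$ and $y$, with $\gamma z_l = z_{l+m}$ for a full period $m$ (the number $k$ of the statement, counting an infinite leaf twice when both ends spiral on that side). A fundamental domain for the $\gamma$-action on this family picks out exactly the leaves $\widetilde g_{i_1},\dots,\widetilde g_{i_k}$ and triangles $\widetilde T_{j_1},\dots,\widetilde T_{j_k}$.

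Next I would establish the "telescoping" lemma: for consecutive vertices in this fan, a double ratio $D_a$ of the form $D_a(E,F,z_l,z_{l+1})$ computes the ratio of the $a$-dimensional components of two bases adapted to $E$ and $F$, and its logarithm equals $\log\bigl(m_a^\rho(c_i)/m_{a+1}^\rho(c_i)\bigr)$ when summed over a full period, since $\rho(\gamma)'$ scales the $E^{(a)}$-part by $m_1\cdots m_a$ and $\gamma$ shifts the fan by one period. Concretely, I would show
$$
\ell_a^\rho(c_i) = \log\frac{m_a^\rho(c_i)}{m_{a+1}^\rho(c_i)} = \sum_{l} \bigl(\text{contribution of the step from }z_l\text{ to }z_{l+1}\bigr),
$$
where each step's contribution is either a shear invariant $\sigma_a^\rho(\widetilde g_{i_l})$ of an infinite leaf crossed, or a sum of triangle invariants $\sum_{b+c=n-a}\tau_{abc}^\rho(T_{j_l},v_l)$ picked up when passing through a triangle (using Lemma~\ref{lem:QuadTripleRatios}, which expresses exactly such a sum of triple ratios as a quadruple ratio $Q_a$, and relating $Q_a$ to the double ratios of the two edges of the triangle as seen from the fan). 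This is the combinatorial heart: one walks along the side of $\widetilde c_i$, alternately crossing an infinite leaf (contributing a double ratio, i.e.\ a shear invariant of that leaf, once the flags $G,G'$ in the definition of $\sigma_a$ are matched with the relevant fan vertices) and traversing a triangle (contributing, via $Q_a$, the sum $\sum_{b+c=n-a}\tau_{abc}$).

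The bookkeeping of signs and index reversals is the main obstacle, and it splits into the three orientation issues flagged before the statement. First, whether a spiraling leaf $\widetilde g_{i_l}$ is oriented toward or away from $\widetilde c_i$ determines whether the double ratio in the definition of $\sigma_a^\rho(g_{i_l})$ uses the pair $(G,G')$ in the same cyclic order as it appears in the fan or the reversed order; by Lemma~\ref{lem:RelationsDoubleRatios} the reversal turns $\sigma_a$ into $\sigma_{n-a}$, which is precisely the content of the definition of $\overline\sigma_a^\rho(g_{i_l})$. Second, whether the spiraling wraps in the direction of the orientation of $c_i$ or against it determines whether $\gamma$ or $\gamma^{-1}$ does the period-shift, hence whether the telescoping sum computes $\log(m_a/m_{a+1})$ or its negative, and simultaneously whether the stable/unstable roles of $E,F$ are swapped — which by the second relation in Lemma~\ref{lem:RelationsDoubleRatios} ($D_a(F,E,G,G')=D_{n-a}(E,F,G,G')^{-1}$) and the analogous symmetry of triple ratios (Lemma~\ref{lem:PermuteTripleRatios}) converts every index $a$ into $n-a$ and inserts an overall minus sign. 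Third, the choice of side $c_i^\Right$ versus $c_i^\Left$ flips the ambient orientation of the arc of $\partial_\infty\widetilde S$ carrying the fan, contributing one more sign. Checking that these three $\mathbb Z/2$ choices combine exactly as in cases (1)--(4) — e.g.\ that right-side plus orientation-direction gives the clean positive formula (1), while switching to the left side negates it to give (3), and switching the spiraling direction both negates and sends $a\mapsto n-a$ to give (2) — is a finite but delicate verification. I would do it once carefully in case (1) and then derive (2), (3), (4) from (1) purely formally using Lemmas~\ref{lem:PermuteTripleRatios}, \ref{lem:RelationsDoubleRatios}, the reversal rule $\ell_a^\rho(c)=\ell_{n-a}^\rho(c)$ under reversing orientation, and the fact that reversing the orientation of $c_i$ exchanges its two sides.
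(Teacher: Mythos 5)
Your proposal is essentially the paper's own argument: the paper likewise works in the universal cover with the $\rho([c_i])$--equivariant fan of triangles spiraling to the common endpoint of $\widetilde c_i$, telescopes the double-ratio wedge terms using the eigenvalue action of a lift $\rho([c_i])'$ on the stable (or unstable) flag at that endpoint, identifies the leftover per-triangle factors as quadruple ratios and converts them into $\sum_{b+c=n-a}\tau_{abc}$ via Lemma~\ref{lem:QuadTripleRatios}, and then obtains the left-hand-side cases by reversing the orientation of $c_i$. The one small overstatement is that cases (2)--(4) are not all purely formal consequences of (1): reversing the orientation of $c_i$ only exchanges (1) with (4) and (2) with (3), so one opposite-spiraling case must be recomputed with the unstable flag (replacing the scaling factor $\prod_{b\leq a} m_b^\rho(c_i)$ by $\prod_{b> n-a} m_b^\rho(c_i)$), which is precisely the stable/unstable swap you describe and is how the paper handles it.
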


\begin{proof} First consider the case where the selected side is the right-hand side $c_i^\Right$,  and where  the spiraling occurs in the direction of the orientation of $ c_i$.

Without loss of generality, we can assume that the infinite leaves $ g_{i_1}$, $ g_{i_2}$, \dots, $ g_{i_k}$, $ g_{i_{k+1}}= g_{i_1}$ and triangles $T_{j_1}$, $T_{j_2}$, \dots, $T_{j_k}$, $T_{j_{k+1}}=T_{j_1}$ are indexed so that the spiraling part of $T_{j_l}$ is bounded on the left by $ g_{i_{l-1}}$ and on the right by $ g_{i_{l}}$.

In the universal covering $\widetilde S$, lift  the leaves $  g_{i_l}$ to leaves $\widetilde  g_{i_l}$ of the preimage $\widetilde \lambda$ of $\lambda$,  and the triangles $T_{j_l}$ to triangles $\widetilde T_{j_l}$, in such a way that $\widetilde g_{i_{l-1}}$  and $\widetilde g_{i_l}$ are two of the sides of $\widetilde T_{j_l}$. See Figure~\ref{fig:Spiraling}.  Then, because of the orientation choice for $ c_i$, we have that $ [c_i](\widetilde  g_{i_1}) = \widetilde  g_{i_{k+1}}$ and  $[ c_i](\widetilde T_{j_1}) = \widetilde T_{j_{k+1}}$ for a suitable class $[ c_i] \in \pi_1(S)$ represented by the curve $ c_i$. 

\begin{figure}[htbp]

\SetLabels
( .5*.85 ) $\widetilde c_i $ \\
( .19*.46) $\widetilde T_{j_1} $ \\
( .35*.52) $\widetilde g_{i_1} $ \\
(.25 * .28) $\widetilde T_{j_2} $ \\
( .38*.36 ) $\widetilde g_{i_2} $ \\
( .775* .36) $\widetilde g_{i_k} $ \\
( .715* .18) $\widetilde T_{j_k} $ \\
( .67* .38) $\widetilde g_{i_{k-1}} $ \\
( .835* .19) $\widetilde T_{j_{k+1}} $ \\
( .86* .34) $\widetilde g_{i_{k+1}} $ \\
( 1* .82) $ x$ \\
(.06 * .3) $x_1 $ \\
(.18 * .06) $x_2 $ \\
\T(.62 * 0) $ x_{k-1}$ \\
\T( .745* 0) $ x_k$ \\
\T(.89 * 0) $ x_{k+1}$ \\
( * ) $ $ \\
( * ) $ $ \\
( * ) $ $ \\
( * ) $ $ \\
( * ) $ $ \\
( * ) $ $ \\
( * ) $ $ \\
\endSetLabels
\centerline{\AffixLabels{ \includegraphics{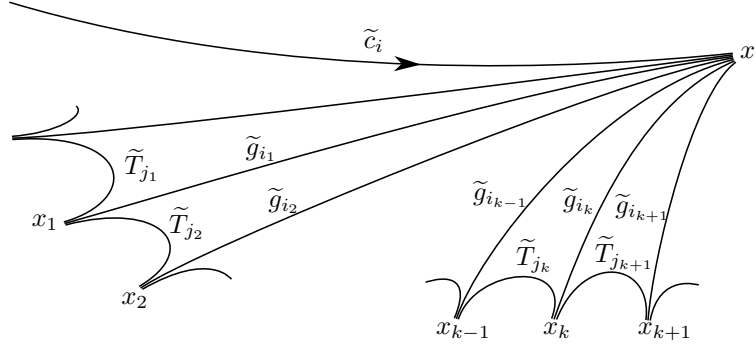} }}

\caption{Spiraling in the universal cover $\widetilde S$}
\label{fig:Spiraling}
\end{figure}

Let $E = \mathcal F_\rho(x) \in \Flag$ be the flag associated by the flag curve $\mathcal F_\rho$  to the common end point $x \in \partial_\infty \widetilde S$ of the $\widetilde g_{i_l}$. Similarly, let $F_l = \mathcal F_\rho(x_l)$ be the flag associated with the other end point $x_l \not = x$ of $\widetilde g_{i_l}$. It is also convenient to set $G_l = F_{l-1} \in \Flag$.  

Pick non-zero elements $e^{(a')} \in \Lambda^{a'} \bigl( E^{(a')} \bigr)$, $f_{l'}^{(a')} \in \Lambda^{a'} \bigl( F_{l'}^{(a')} \bigr)$ and $g_{l'}^{(a')} \in \Lambda^{a'} \bigl( G_{l'}^{(a')} \bigr)  =  \Lambda^{a'} \bigl( F_{l'-1}^{(a')} \bigr)$. 

\begin{lem}
\label{lem:RewriteShear}
\begin{align*}
\overline\sigma_a^\rho ( g_{i_l}) &= \log 
\Biggl|
\frac
{e^{(a)}\wedge f_l^{(n-a-1)} \wedge g_l^{(1)} }
 {e^{(a-1)}\wedge f_l^{(n-a)} \wedge g_l^{(1)}}
 \ 
 \frac
 {e^{(a-1)}\wedge f_{l+1}^{(1)} \wedge g_{l+1}^{(n-a)}}
{e^{(a)}\wedge f_{l+1}^{(1)} \wedge g_{l+1}^{(n-a-1)} }\\
&\qquad\qquad\qquad\qquad\qquad\qquad\qquad\qquad
 \frac
 {e^{(a)}\wedge f_{l}^{(n-a)} }
  {e^{(a)}\wedge g_{l+1}^{(n-a)} }\ 
\frac{ e^{(a+1)}\wedge g_{l+1}^{(n-a-1)} }
{e^{(a+1)}\wedge f_{l}^{(n-a-1)} }
\Biggr|
\end{align*}

\end{lem}
\begin{proof}
The right-hand side of the equation is clearly independent of the choices of elements $e^{(a')} \in \Lambda^{a'} \bigl( E^{(a')} \bigr)$, $f_{l'}^{(a')} \in \Lambda^{a'} \bigl( F_{l'}^{(a')} \bigr)$ and $g_{l'}^{(a')} \in \Lambda^{a'} \bigl( F_{l'-1}^{(a')} \bigr)$. It is equal to the logarithm of the double ratio $ D_a(E, F_l, F_{l-1}, F_{l+1})$ of \S \ref{subsect:DoubleRatios} in the special case where $g_{l'}^{(a')} = f_{l'-1}^{(a')}$, and therefore in all cases. 

(The absolute value is introduced so that, here and later in the arguments,  we do not have to worry about sign changes when we split ratios or permute terms in the wedge products.)

By definition of $\sigma_a^\rho ( g_{i_l})$, this logarithm $\log D_a(E, F_l, F_{l-1}, F_{l+1})$  is equal to $\sigma_a^\rho ( g_{i_l})$ when  the orientation of $\widetilde g_{i_l}$ points towards $x \in \partial_\infty \widetilde S$, and is equal to  $\sigma_{n-a}^\rho( g_{i_l})$ when $ g_{i_l}$ is oriented away from $x$. It is therefore equal to  $\overline \sigma_a^\rho( g_{i_l})$ in all cases. 
\end{proof}

Lemma~\ref{lem:RewriteShear} enables us to split $\overline\sigma_a^\rho ( g_{i_l})$ according to the respective contributions of the triangles $\widetilde T_{i_l}$ and $\widetilde T_{i_{l+1}}$:
\begin{align*}
\overline\sigma_a^\rho ( g_{i_l}) &= \log \,
\Biggl|
\frac
{e^{(a)}\wedge f_l^{(n-a-1)} \wedge g_l^{(1)} }
{e^{(a-1)}\wedge f_l^{(n-a)} \wedge g_l^{(1)}}
\frac
{e^{(a)}\wedge f_{l}^{(n-a)} }
{e^{(a+1)}\wedge f_{l}^{(n-a-1)} }
 \Biggr|
 \\
 &\qquad\qquad\qquad\qquad
 + \log \,
\Biggl|
\frac
{e^{(a-1)}\wedge f_{l+1}^{(1)} \wedge g_{l+1}^{(n-a)}}
{e^{(a)}\wedge f_{l+1}^{(1)} \wedge g_{l+1}^{(n-a-1)}}
  \frac
  {e^{(a+1)}\wedge g_{l+1}^{(n-a-1)} }
{e^{(a)}\wedge g_{l+1}^{(n-a)} }
 \Biggr|
\end{align*}

Summing over $l$ and grouping terms according to the contribution of each triangle gives
\begin{align*}
\sum_{l=1}^k
\overline\sigma_a^\rho ( g_{i_l})&= \log \,
\Biggl|
\frac
{e^{(a)}\wedge f_1^{(n-a-1)} \wedge g_1^{(1)} }
  {e^{(a-1)}\wedge f_1^{(n-a)} \wedge g_1^{(1)}}
  \,
  \frac
  {e^{(a)}\wedge f_{1}^{(n-a)} }
{e^{(a+1)}\wedge f_{1}^{(n-a-1)}  }
 \Biggr|
\\
 &\qquad+
 \sum_{l=2}^k
  \log \,
\Biggl|
\frac
{e^{(a)}\wedge f_l^{(n-a-1)} \wedge g_l^{(1)}}
  {e^{(a-1)}\wedge f_l^{(n-a)} \wedge g_l^{(1)}}
  \,
  \frac
  {e^{(a)}\wedge f_{l}^{(n-a)} }
{e^{(a+1)}\wedge f_{l}^{(n-a-1)} }
  \\
  &\qquad\qquad\qquad\qquad\qquad\qquad
\frac
{e^{(a-1)}\wedge f_{l}^{(1)} \wedge g_{l}^{(n-a)}}
{e^{(a)}\wedge f_{l}^{(1)} \wedge g_{l}^{(n-a-1)}}
  \,
  \frac
  { e^{(a+1)}\wedge g_{l}^{(n-a-1)} }
{e^{(a)}\wedge g_{l}^{(n-a)} }
   \Biggr|
  \\
 &\qquad+ \log \,
\Biggl|
\frac
{e^{(a-1)}\wedge f_{k+1}^{(1)} \wedge g_{k+1}^{(n-a)}}
  {e^{(a)}\wedge g_{k+1}^{(n-a)} }
  \,
  \frac
  { e^{(a+1)}\wedge g_{k+1}^{(n-a-1)} }
{e^{(a)}\wedge f_{k+1}^{(1)} \wedge g_{k+1}^{(n-a-1)}}
 \Biggr|
\end{align*}

Consider the last term. Lift $\rho\bigl( [c_i] \bigr)\in \PSL$ to $\rho\bigl( [c_i] \bigr)'\in \SL$. Remembering that $\widetilde T_{j_{k+1}}$ is equal to $ c_i(\widetilde T_{j_1})$, we can choose $f_{k+1}^{(1)}= \rho\bigl( [c_i] \bigr)_*' \bigl (f_{1}^{(1)}\bigr)$ and $g_{k+1}^{(a')}= \rho\bigl( [c_i] \bigr)_*' \bigl(g_{1}^{(a')}\bigr)$ for every $a'$, where $\rho\bigl( [c_i] \bigr)_*'\colon \Lambda^{a'}(\R^n) \to  \Lambda^{a'}(\R^n)$ is the map induced by $\rho\bigl( [c_i] \bigr)'\colon \R^n \to \R^n$. Also, $\rho\bigl( [c_i] \bigr)'$ respects the flag $E$. 

We now use the fact that  the spiraling occurs in the direction of the orientation of $ c_i$.   As in \S \ref{subsect:FlagMap}, $\rho\bigl( [c_i] \bigr)'$  has  eigenvalues $m_1^\rho( c_i)$, $m_2^\rho( c_i)$, \dots, $m_n^\rho( c_i)$ with corresponding 1-dimensional  eigenspaces $L_1$, $L_2$, \dots, $L_n$. Because the flag $E=\mathcal F_\rho(x)$ is associated with the  positive end point $x$ of a component $\widetilde c_i$ of the preimage of $c_i$, it is the stable flag of $\rho\bigl( [c_i] \bigr)$ by Part~(1) of Proposition~\ref{prop:FlagMap} and 
$$
E^{(a)} = \bigoplus_{b=1}^a L_b. 
$$
Therefore
$$
 \rho\bigl( [c_i] \bigr)'_*\bigl(e^{(a)}\bigr)=\biggl( \prod_{b=1}^{a} m_b^\rho( c_i)\biggr)   e^{(a)}
$$
for every  $ e^{(a)} \in \Lambda^{a}(E^{(a)})$.

Then, using the fact that $\rho\bigl( [c_i] \bigr)'\in \SL$ acts by the identity on $\Lambda^n(\R^n)$ for the second equation,
\begin{align*}
 \log \,&
\Biggl|
\frac
{e^{(a-1)}\wedge f_{k+1}^{(1)} \wedge g_{k+1}^{(n-a)}}
  {e^{(a)}\wedge f_{k+1}^{(1)} \wedge g_{k+1}^{(n-a-1)}}
  \,
  \frac
  {e^{(a+1)}\wedge g_{k+1}^{(n-a-1)} }
{e^{(a)}\wedge g_{k+1}^{(n-a)} }
 \Biggr|
 \\
 &=
 \log \,
\Biggl|
\frac
{e^{(a-1)}\wedge  \rho\bigl( [c_i] \bigr)_*'\bigl(f_{1}^{(1)} \bigr) \wedge  \rho\bigl( [c_i] \bigr)'_*\bigl(g_{1}^{(n-a)} \bigr) 
  }
  {e^{(a)}\wedge \rho\bigl( [c_i] \bigr)'_*\bigl( f_{1}^{(1)} \bigr) \wedge  \rho\bigl( [c_i] \bigr)'_* \bigl(g_{1}^{(n-a-1)} \bigr)}
  \,
  \frac
  { e^{(a+1)}\wedge  \rho\bigl( [c_i] \bigr)'_* \bigl(g_{1}^{(n-a-1)} \bigr)}
{e^{(a)}\wedge \rho\bigl( [c_i] \bigr)'_* \bigl( g_{1}^{(n-a)} \bigr) }
 \Biggr|
 \\
 &=
 \log \,
\Biggl|
\frac
{ \rho\bigl( [c_i] \bigr)'_*{}^{-1}\bigl(e^{(a-1)} \bigr)\wedge f_{1}^{(1)} \wedge g_{1}^{(n-a)}
  }
  { \rho\bigl( [c_i] \bigr)'_*{}^{-1}\bigl(e^{(a)} \bigr) \wedge  f_{1}^{(1)} \wedge g_{1}^{(n-a-1)}}
  \,
  \frac
  { \rho\bigl( [c_i] \bigr)'_*{}^{-1} \bigl(e^{(a+1)} \bigr) \wedge  g_{1}^{(n-a-1)} }
{\rho\bigl( [c_i] \bigr)'_*{}^{-1} \bigl(e^{(a)} \bigr) \wedge g_{1}^{(n-a)}   }
 \Biggr|
 \\
 &=
 \log \,
\Biggl|
\frac{m_a^\rho( c_i)}{m_{a+1}^\rho( c_i)}\ 
\frac
{e^{(a-1)}\wedge f_{1}^{(1)} \wedge g_{1}^{(n-a)}}
  {  e^{(a)} \wedge  f_{1}^{(1)} \wedge g_{1}^{(n-a-1)}}
  \
\frac  { e^{(a+1)} \wedge  g_{1}^{(n-a-1)} }
{ e^{(a)} \wedge g_{1}^{(n-a)} }
 \Biggr|
 \\
 &=
 \ell_a^\rho( c_i)  +
 \log \,
\Biggl|
\frac
{e^{(a-1)}\wedge f_{1}^{(1)} \wedge g_{1}^{(n-a)}}
  {  e^{(a)} \wedge  f_{1}^{(1)} \wedge g_{1}^{(n-a-1)}}
  \,
\frac  { e^{(a+1)} \wedge  g_{1}^{(n-a-1)} }
{ e^{(a)} \wedge g_{1}^{(n-a)} }
 \Biggr|.
\end{align*}

Combining with our earlier computation, this yields
\begin{align*}
\sum_{l=1}^k
\overline
\sigma_a^\rho ( g_{i_l})
&=  \ell_a^\rho( c_i) 
 +
 \sum_{l=1}^k
  \log \,
\Biggl|
\frac
{ e^{(a)}\wedge f_l^{(n-a-1)} \wedge g_l^{(1)}   }
{ e^{(a-1)}\wedge f_l^{(n-a)} \wedge g_l^{(1)} }
\,\frac
{ e^{(a)}\wedge f_{l}^{(n-a)} }
{ e^{(a+1)}\wedge f_{l}^{(n-a-1)} }
  \\
  &\qquad\qquad\qquad\qquad \qquad\qquad
\frac
{  e^{(a-1)}\wedge f_{l}^{(1)} \wedge g_{l}^{(n-a)} }
  {  e^{(a)}\wedge f_{l}^{(1)} \wedge g_{l}^{(n-a-1)} }
 \, \frac
  { e^{(a+1)}\wedge g_{l}^{(n-a-1)} }
{ e^{(a)}\wedge g_{l}^{(n-a)}  }
   \Biggr|
  \\
 &= \ell_a^\rho( c_i)  - \sum_{l=1}^k \log Q_a (E, F_l, G_l)
 = \ell_a^\rho( c_i)  - \sum_{l=1}^k \sum_{b+c=n-a} \kern -4pt \log T_{abc} (E, F_l, G_l)\\
 &= \ell_a^\rho( c_i)  - \sum_{l=1}^k \sum_{b+c=n-a} \tau_{abc}^\rho (T_{j_l}, v_l)
 \end{align*}
 where $Q_a(E, F_l, G_l)$ is the quadruple ratio of \S \ref{subsect:QuadRatios}, and where we use Lemma~\ref{lem:QuadTripleRatios} for the third equality. 
 
 This concludes the proof of Proposition~\ref{prop:LengthFunctionOtherInvariants} in the first case, when the side of $ c_i$ considered is the right-hand side $ c_i^\Right$ and where the  $ g_{i_l}$ spiral on this side in the direction of the orientation of $ c_i$.
 
 Let us now consider the second case, where we are still considering the right-hand side $ c_i^\Right$ but where the spiraling occurs in the direction opposite to the orientation of $ c_i$. The arguments are the same except that
 $$
E^{(a)} = \bigoplus_{b=n-a+1}^n L_b. 
$$
 because the flag $E= \mathcal F_\rho(x)$ is now associated with the negative end point $x$ of $\widetilde c_i$, and is therefore the unstable flag of $ \rho\bigl( [c_i] \bigr)$. It follows that
 $$
 \rho\bigl( [c_i] \bigr)'_*\bigl(e^{(a)}\bigr)=\biggl( \prod_{b=n-a+1}^{n} m_b^\rho( c_i)\biggr)   e^{(a)}
$$
for every  $ e^{(a)} \in \Lambda^{a}(E^{(a)})$.  
Adapting the earlier computation to this case now  leads to the conclusion that 
$$
\sum_{l=1}^k
\overline
\sigma_a^\rho ( g_{i_l})
=
- \ell_{n-a}^\rho( c_i) - \sum_{l=1}^k \sum_{b+c=n-a} \tau_{abc}^\rho (T_{j_l}, v_l).
$$
Replacing $a$ by $n-a$ then gives 
$$
 \ell_a^\rho( c_i) 
 =- \sum_{l=1}^k
\overline
\sigma_{n-a}^\rho ( g_{i_l})
- \sum_{l=1}^k \sum_{b+c=a} \tau_{(n-a)bc}^\rho (T_{j_l}, v_l)
$$
as desired.

The remaining two cases of Proposition~\ref{prop:LengthFunctionOtherInvariants}, where the side of $ c_i$ considered is the left-hand side $ c_i^\Left$ are  obtained from these first two by reversing the orientation of $ c_i$ and using the fact that this replaces $\ell_a^\rho( c_i)$ by $\ell_{n-a}^\rho( c_i)$. 
 \end{proof}

\begin{rem}
\label{rem:ComputeEigenvalues}
In the above proof of Proposition~\ref{prop:LengthFunctionOtherInvariants}, it was convenient to use absolute values everywhere so that we did not have to worry about the signs of the quantities inside logarithms. Another method would have been to take the exponential of the two sides of each equation. This provides a slightly stronger result. For instance, in the first case considered in the proof, this sequence of equations involving exponentials gives
$$
\frac{m_a^\rho(c_i)}{m_{a+1}^\rho(c_i)}
= \ 
\prod_{l=1}^k \exp \overline\sigma_a(g_{i_l}) \ 
\prod_{l=1}^k \prod_{b+c=n-a} \exp \tau_{abc}^\rho(T_{j_l}, v_l)
$$
and directly proves that the quotient on the left-hand side of the equation is positive, without having to rely on Lemma~\ref{lem:HitchinLoxodromicPositiveEigenvalues}. Similar positivity conclusions hold in the other three cases of the proof. 

We will take advantage of this observation in 
\S \ref{subsect:PhiHomeo}.
\end{rem}

\section{Parametrizing the Hitchin component}
\label{sect:ParametrizingHitchin}

\subsection{The space of possible invariants}
\label{subsect:PossibleInvariants}


Recall that we are given a maximal geodesic lamination $\lambda$ with finitely many leaves, consisting of closed leaves $ c_i$ and of infinite leaves $ g_j$ whose ends spiral around the $ c_i$. In addition, each leaf $ c_i$ or $ g_j$ carries an orientation, and each closed leaf $ c_i$ is endowed with an arc $k_i$ transverse to $\lambda$, cutting $ c_i$ in exactly one point, and meeting no other closed leaf $ c_j$ with $j\ne i$. 

We have associated to a Hitchin representation $\rho \colon \pi_1(S) \to \PSL$ triangle invariants $\tau_{abc}^\rho(T_j, v_j)\in \R$ and shear invariants $\sigma_a^\rho( g_j)$ and $\sigma_a^\rho( c_i)$. Lemma~\ref{lem:RotateTirangleInvariants} and Proposition~\ref{prop:LengthFunctionOtherInvariants} provide relations between these invariants.  Let $\mathcal P$ be the set of all possible such invariants. 

More precisely, let $\mathcal P$ be the space of functions  $\tau_{abc}$ and $\sigma_a$  such that:
\begin{enumerate}
\item for every triple of integers $a$, $b$, $c\geq1$ with $a+b+c=n$, $\tau_{abc}$ associates a number $\tau_{abc}(T_j, v_j)\in \R$ to each component $T_j$ of the complement $S-\lambda$ and to each vertex $v_j$ of the ideal triangle $T_j$;
\item for each integer $a=1$, $2$, \dots, $n-1$, $\sigma_a$ associates a number $\sigma_a( c_i)$ or $\sigma_a( g_j)\in \R$ to each leaf $ c_i$ or $ g_j$ of $\lambda$; 
 
\item  for each triangle $T_j$ in the complement $S-\lambda$, the functions $\tau_{abc}$ satisfy the \emph{Rotation Condition} stated below;

\item for every closed leaf $ c_i$ and every index $1\leq a \leq n-1$, the functions $\tau_{abc}$ and $\sigma_a$ satisfy the  \emph{Closed Leaf Equality} condition stated below;

\item for every closed leaf $ c_i$ and every index $1\leq a \leq n-1$, the functions $\tau_{abc}$ and $\sigma_a$ satisfy the  \emph{Closed Leaf Inequality} condition stated below.

\end{enumerate}

The Rotation Condition just comes from Lemma~\ref{lem:RotateTirangleInvariants}. 

\medskip
\noindent\textsc{Rotation Condition.}
If the vertices $v_j$ and $v_j'$ of the triangle component $T_j$ of $S-\lambda$ are such that $v_j'$ immediately follows $v_j$ when going clockwise around the boundary of $T_j$, then 
$$
\tau_{abc} (T_j, v_j) = \tau_{bca}(T_j, v_j').
$$
\medskip

The Closed Leaf Equality and Closed Leaf Inequality are directly inspired by the formulas of Proposition~\ref{prop:LengthFunctionOtherInvariants} computing the $\rho$--lengths $\ell_a^\rho(c_i)$ in terms of the triangle and shear invariants.  Let $ g_{i_1}$, $ g_{i_2}$, \dots, $ g_{i_k}$ be the infinite leaves of $\lambda$ that spiral on the right-hand side $ c_i^\Right$ of  $ c_i$. Let $T_{j_1}$, $T_{j_2}$, \dots, $T_{j_k}$ be the components of the complement $S-\lambda$ that spiral on this  side $ c_i^\Right$ and, for each $j_l$, let $v_{l}$ be the vertex of the ideal triangle $T_{j_l}$ towards which the spiraling occurs.  Similarly, let $ g_{i_1'}$, $ g_{i_2'}$, \dots, $ g_{i_{k'}'}$ be the infinite leaves of $\lambda$ that spiral on the left-hand side $ c_i^\Left$, let $T_{j_1'}$, $T_{j_2'}$, \dots, $T_{j_{k'}'}$ be the components of the complement $S-\lambda$ that spiral on $ c_i^\Left$, and let $v_{l}'$ be the vertex of the ideal triangle $T_{j_l'}$ towards which the spiraling occurs. Following Proposition~\ref{prop:LengthFunctionOtherInvariants}, set $\overline\sigma_a( g_{i_l}) = \sigma_a( g_{i_l})$ if the leaf $ g_{i_l}$ is oriented towards $ c_i$ and $\overline \sigma_a( g_{i_l}) = \sigma_{n-a}( g_{i_l})$ otherwise, and define
$$
L_a^ \Right( c_i) =  \sum_{l=1}^k \overline\sigma_{ a}( g_{i_l}) +   \sum_{l=1}^k \sum_{b+c=n-a} \tau_{abc} (T_{j_l}, v_l)
$$ 
 if the spiraling occurs in the direction of the orientation of $c_i$, and 
 $$
L_a^\Right( c_i)= - \sum_{l=1}^k
\overline
\sigma_{n-a} ( g_{i_l})- \sum_{l=1}^k \sum_{b+c=a} \tau_{(n-a)bc} (T_{j_l}, v_l)
$$
if the spiraling occurs in the direction opposite to the orientation of $c_i$.

Similarly define
$$
L_a^\Left( c_i) =   -\sum_{l=1}^{k'} \overline\sigma_{ a}( g_{i_l'}) -   \sum_{l=1}^{k'} \sum_{b+c=n-a} \tau_{abc} (T_{j_l'}, v_l')
$$
 if the spiraling occurs in the direction of the orientation of $c_i$, and 
$$
L_a^\Left( c_i) =
 \sum_{l=1}^{k'}
\overline
\sigma_{n-a} ( g_{i_l'})
+ \sum_{l=1}^{k'} \sum_{b+c=a} \tau_{(n-a)bc} (T_{j_l'}, v_l')
$$
otherwise. 

The numbers $L_a^\Right(c_i)$ and $L_a^\Right(c_i)$ are completely determined by the functions $\tau_{a'b'c'}$ and $\sigma_{a'}$. Proposition~\ref{prop:LengthFunctionOtherInvariants} says that, when $\tau_{a'b'c'} = \tau_{a'b'c'}^\rho$ and $\sigma_{a'}=\sigma_{a'}^\rho$ are associated to a Hitchin representation $\rho \colon \pi_1(S) \to \PSL$ as in \S \ref{sect:HitchinRepsInvariants}, then 
$$
L_a^\Right( c_i) = L_a^\Left( c_i) = \ell_a^\rho(c_i)>0.
$$
This motivates the following conditions. 

\medskip

\noindent\textsc{Closed Leaf Equality.}
With the above notation,
$$
L_a^\Right( c_i) = L_a^\Left( c_i)
$$
for every closed leaf $ c_i$ of $\lambda$, and for every index $1\leq a\leq n-1$. 

\medskip
\noindent\textsc{Closed Leaf Inequality.}
With the above notation,
$$
L_a^\Right( c_i) >0
$$
for every closed leaf $ c_i$ of $\lambda$, and for every index $1\leq a\leq n-1$. 
\medskip

The geodesic lamination $\lambda$ has $s$ closed leaves and $t$ infinite leaves, and its complement $S-\lambda$ consists of $u$ triangles. Also, there are $\frac{(n-1)(n-2)}2 $ triples of integers $a$, $b$, $c\geq1$ with $a+b+c=n$. Therefore, the   functions $\tau_{abc}$ and $\sigma_a$ satisfying Conditions (1) and (2)  form a vector space of dimension
$$
N= 3u{\textstyle\frac{(n-1)(n-2)}2} + (s+t)(n-1) .
$$

\begin{prop}
\label{prop:Polytope}
The set $\mathcal P$ of functions $\tau_{abc}$ and $\sigma_a$ satisfying the above conditions {\upshape (1--5)} is the interior of a convex polytope $\overline {\mathcal P}\subset \R^N$. 
\end{prop}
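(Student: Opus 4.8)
The plan is to observe that, after fixing the topological data, each of the five conditions defining $\mathcal P$ is a linear equation or a strict linear inequality on the coordinates, so that $\mathcal P$ is the set of points of a linear subspace $V\subseteq\R^N$ satisfying finitely many strict linear inequalities; the statement then reduces to the elementary fact that a non-empty such set is the interior, relative to $V$, of the polytope obtained by relaxing those inequalities to non-strict ones. In detail: by Conditions (1)--(2) the ambient space is the $N$--dimensional vector space $\R^N$ whose coordinates are the numbers $\tau_{abc}(T_j,v_j)$, $\sigma_a(g_j)$ and $\sigma_a(c_i)$. The Rotation Condition (3) is patently a finite system of linear equations among the $\tau_{abc}(T_j,v_j)$. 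The Closed Leaf Equality (4) is also a finite system of linear equations: once the topological data of \S\ref{subsect:TopData} (the orientations of the leaves and the directions of spiraling around each $c_i$) is fixed, each of the quantities $L^\Right_a(c_i)$ and $L^\Left_a(c_i)$ is, by its very definition, a fixed linear form in the coordinates --- the term $\overline\sigma_a(g_{i_l})$ is in each instance equal to one of the coordinates $\sigma_a(g_{i_l})$ or $\sigma_{n-a}(g_{i_l})$, and the signs occurring in the formulas are determined by the fixed data. Let $V\subseteq\R^N$ be the linear subspace defined by Conditions (3) and (4). Finally the Closed Leaf Inequality (5) asserts precisely that each of the finitely many linear forms $L^\Right_a(c_i)$, restricted to $V$, takes strictly positive values; thus $\mathcal P$ is the set of $x\in V$ with $L^\Right_a(c_i)(x)>0$ for every closed leaf $c_i$ and every $1\leq a\leq n-1$.

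Next I would produce a point of $\mathcal P$ and identify the polytope. Composing a Fuchsian representation $\pi_1(S)\to\mathrm{PSL}_2(\R)$ with the natural embedding $\mathrm{PSL}_2(\R)\to\PSL$ yields a Hitchin representation $\rho$, and its invariants $\tau^\rho_{abc}$, $\sigma^\rho_a$ satisfy the Rotation Condition by Lemma~\ref{lem:RotateTirangleInvariants}, the Closed Leaf Equality because Proposition~\ref{prop:LengthFunctionOtherInvariants} gives $L^\Right_a(c_i)=L^\Left_a(c_i)=\ell^\rho_a(c_i)$, and the Closed Leaf Inequality because $\ell^\rho_a(c_i)=\log\bigl(m^\rho_a(c_i)/m^\rho_{a+1}(c_i)\bigr)>0$ by Lemma~\ref{lem:HitchinLoxodromicPositiveEigenvalues}; so $\mathcal P\neq\emptyset$, and I fix $x_0\in\mathcal P$. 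Let $\overline{\mathcal P}$ be the subset of $V$ defined by the non-strict inequalities $L^\Right_a(c_i)(x)\geq0$; it is the intersection of the subspace $V$ with finitely many closed half-spaces, hence a convex polytope in $\R^N$, and $\mathcal P\subseteq\overline{\mathcal P}$. On one hand $\mathcal P$ is the intersection of $V$ with the corresponding open half-spaces, so it is open in $V$ and therefore contained in the interior of $\overline{\mathcal P}$ relative to $V$. On the other hand, if some point $x$ in the relative interior of $\overline{\mathcal P}$ had $L^\Right_a(c_i)(x)=0$ for some $c_i$ and $a$, then the point $x+\epsilon(x-x_0)=(1+\epsilon)x-\epsilon x_0$ would lie in $V$ and satisfy $L^\Right_a(c_i)\bigl(x+\epsilon(x-x_0)\bigr)=-\epsilon\,L^\Right_a(c_i)(x_0)<0$, contradicting membership in $\overline{\mathcal P}$ for all small $\epsilon>0$; hence every relative-interior point of $\overline{\mathcal P}$ lies in $\mathcal P$. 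Therefore $\mathcal P$ is exactly the interior of $\overline{\mathcal P}$ (taken, as usual for a polytope lying in a proper subspace of $\R^N$, relative to its affine span $V$).

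I do not expect any serious obstacle: the content is entirely the linearity of Conditions (1)--(5), after which the conclusion is the standard description of a non-empty open polyhedral set as the relative interior of its closure. The one step that genuinely deserves care is the claim, in the first paragraph, that $L^\Right_a(c_i)$ and $L^\Left_a(c_i)$ are honest linear forms on $\R^N$ --- i.e.\ that the apparent case distinctions in their definition (which coordinate $\overline\sigma$ abbreviates, where the signs go) are completely frozen once the orientations of the leaves and the spiraling directions have been chosen --- together with the soft but necessary point that $\mathcal P$ be shown non-empty, for which the Fuchsian representations of the second paragraph suffice.
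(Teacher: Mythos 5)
Your proposal is correct and follows essentially the same route as the paper, whose entire proof is the observation that $\mathcal P$ is cut out in $\R^N$ by finitely many linear equalities and strict linear inequalities. The extra material you supply (checking that the case distinctions in $L_a^{\Right}(c_i)$ and $L_a^{\Left}(c_i)$ are frozen by the fixed topological data, exhibiting a point of $\mathcal P$ via a Fuchsian representation, and the standard relative-interior argument identifying $\mathcal P$ with the interior of its closure in its affine span) simply makes explicit the routine steps the paper leaves implicit.
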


\begin{proof}
As a subset of $\R^N$, $\mathcal P$ is defined by a finite collection of linear equalities and strict inequalities. 
\end{proof}

Note that the dimension $d$ of $\overline {\mathcal P}$ is strictly less than $N$. When we refer to the interior of this polytope, we of course mean the topological interior of $\overline {\mathcal P}$ in the $d$--dimensional linear subspace that contains it.

We can formally estimate this dimension $d$. The Rotation Condition enables us to avoid the reference to vertices in the functions $\tau_{abc}$. The space of functions $\tau_{abc}$ satisfying the Rotation Condition is therefore $u\frac{(n-1)(n-2)}2 = 2(g-1)(n-1)(n-2)$, as $u=4(g-1)$.
As before, the space of functions $\sigma_a$ has dimension $(s+t)(n-1)$. 
Since there are $s(n-1)$ Closed Leaf Equalities, the expected dimension of $\mathcal P$ should therefore be
$$
2(g-1)(n-1)(n-2) + (s+t)(n-1) - s(n-1) = 2(g-1)(n^2-1)
$$
if we remember that $t=6(g-1)$. This formal computation can be justified by an explicit argument showing the independence of the relations.  However,  the combinatorics involved are somewhat  complicated. We will be content with the observation that this dimension $2(g-1)(n^2-1)$ is also the dimension of the Hitchin component $\Hit(S)$, and prove that $\mathcal P$ is homeomorphic to $\Hit(S)$. 

In \S \ref{sect:HitchinRepsInvariants}, we associated with each Hitchin representation $\rho \in \Hit(S)$ functions  $\tau_{abc}^\rho$ and $\alpha_a^\rho$ as above, and showed that these functions satisfy the Rotation Condition, the Closed Leaf Equalities  and the Closed Leaf Inequalities (in \S \ref{subsect:Lengths} and  \S \ref{sect:RelationsInvariants}). In other words, we constructed a map
$
\Phi\colon \Hit(S) \to \mathcal P
$.

\begin{thm}
\label{thm:Parametrization}
The above map
$$
\Phi\colon \Hit(S) \to \mathcal P 
$$
is a homeomorphism. 
\end{thm}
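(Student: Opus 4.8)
The plan is to prove that $\Phi\colon \Hit(S)\to\mathcal P$ is a homeomorphism by showing it is injective, proper, and open (or surjective onto a contractible/connected target), and then patching together local pieces. Since $\mathcal P$ is the interior of a convex polytope, hence connected, and $\Hit(S)$ is connected, the cleanest route is: (i) construct a continuous inverse on the level of the Fock--Goncharov-type data, (ii) verify it lands in $\Hit(S)$, and (iii) check the two maps are mutually inverse.

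\textbf{Step 1: Reconstruction of a flag curve from the invariants.} Given a point of $\mathcal P$, I would first forget the Closed Leaf Equalities and Inequalities and work on the universal cover. The triangle invariants $\tau_{abc}$ (satisfying the Rotation Condition) and the shear invariants $\sigma_a$ together constitute exactly the Fock--Goncharov coordinates one would use to build an equivariant map $\mathcal F\colon V\to\Flag$, where $V\subset\partial_\infty\widetilde S$ is the set of vertices of the lift $\widetilde\lambda$. Concretely: pick a base triangle $\widetilde T_0$ of $\widetilde S-\widetilde\lambda$; Proposition~\ref{prop:TripRatiosDetermineFlagTriples} lets us place a generic (indeed positive) flag triple on its three vertices, well-defined up to $\PGL$; then, crossing each edge of the ideal triangulation $\widetilde\lambda$, the double ratios $D_a$ (equivalently the $\sigma_a$, read with the correct orientation conventions of \S\ref{subsect:InfiniteLeafInvariants}--\S\ref{subsect:ClosedLeafInvariant}) prescribe uniquely how to extend the flag to the fourth vertex. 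This is the standard "developing map" construction for shear/Fock--Goncharov coordinates, and positivity of all the $T_{abc}$ and $D_a$ (which is automatic here since the $\tau_{abc}$ and $\sigma_a$ are \emph{real} logarithms, so the underlying ratios are positive) guarantees, via the positivity machinery cited after Proposition~\ref{prop:FlagMap}, that $\mathcal F$ extends continuously to all of $\partial_\infty\widetilde S$ and satisfies the genericity and positivity conditions (3)--(5) of that proposition.

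\textbf{Step 2: Equivariance and the role of the Closed Leaf conditions.} The construction of Step 1 produces, for each $\gamma\in\pi_1(S)$, a $\PGL$-element $\rho(\gamma)$ such that $\mathcal F(\gamma x)=\rho(\gamma)\mathcal F(x)$; the subtlety is that $\rho$ is a priori only defined up to the ambiguity in the flag reconstruction, and one must check it is a genuine homomorphism and lands in $\PSL$. The key point is that going around a closed leaf $c_i$ corresponds to a cycle in the ideal triangulation, and the composition of the elementary flag-extension moves around that cycle must close up; this is precisely where the \textsc{Closed Leaf Equality} intervenes. Indeed, the computation in the proof of Proposition~\ref{prop:LengthFunctionOtherInvariants} shows that the product of shears and triangle invariants around one side of $c_i$ computes $\log(m_a/m_{a+1})$ for the holonomy $\rho([c_i])$; the Equality $L_a^\Right(c_i)=L_a^\Left(c_i)$ is exactly the consistency condition that the two sides give the same eigenvalue data, which is what makes $\rho([c_i])$ well-defined, and the \textsc{Closed Leaf Inequality} $L_a^\Right(c_i)>0$ forces $\rho([c_i])$ to be loxodromic with the correct eigenvalue ordering (cf. Remark~\ref{rem:ComputeEigenvalues}). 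Using Remark~\ref{rem:ComputeEigenvalues} one also gets the lift to $\SL$ with positive eigenvalues, so $\rho$ canonically lifts to $\PSL$. Standard arguments (the leaves of $\lambda$, together with arcs crossing the triangles, generate $\pi_1(S)$, and the only relations are the ones visible in the triangulation) then show $\rho$ is a homomorphism.

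\textbf{Step 3: $\rho$ is Hitchin, and the two maps are inverse.} To see $\rho\in\Hit(S)$: the map $\mathcal F$ we built is positive in the sense of Proposition~\ref{prop:FlagMap}, and a positive equivariant flag curve characterizes Hitchin representations (by Labourie--Fock--Goncharov); alternatively, since $\mathcal P$ is connected and contains the point coming from a Fuchsian/Hitchin representation via the embedding $\mathrm{PSL}_2(\R)\to\PSL$, and the assignment $(\tau,\sigma)\mapsto\rho$ is continuous into the representation variety with image in the open-and-closed set of discrete faithful-type representations cut out by the Anosov condition, the image lies in the single component $\Hit(S)$. This gives a continuous map $\Psi\colon\mathcal P\to\Hit(S)$. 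By construction $\Psi$ undoes the recipe by which $\Phi$ was defined in \S\ref{sect:HitchinRepsInvariants} — the triple ratios of the reconstructed flag triples are the prescribed $\tau_{abc}$ and the double ratios are the prescribed $\sigma_a$ — so $\Phi\circ\Psi=\mathrm{id}_{\mathcal P}$. Conversely $\Psi\circ\Phi=\mathrm{id}_{\Hit(S)}$ because a Hitchin representation is determined by its flag curve (uniqueness in Proposition~\ref{prop:FlagMap}), and the flag curve is in turn determined by the triple and double ratios along the triangulation via Proposition~\ref{prop:TripRatiosDetermineFlagTriples} and the inductive edge-crossing reconstruction. Continuity of $\Phi$ is clear from the definitions (eigenvalues, eigenspaces, and the flag curve all depend continuously on $\rho$, by \cite{Lab1}); continuity of $\Psi$ follows because the flag reconstruction of Step 1 is manifestly continuous in the coordinates, and the topology on $\Hit(S)$ can be detected through, say, the holonomies of finitely many elements.

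\textbf{Main obstacle.} The delicate point is Step 2: proving that the reconstructed holonomy is a well-defined homomorphism into $\PSL$ and that this uses \emph{exactly} the Closed Leaf Equalities and no more. One must choose a careful bookkeeping of the ideal triangulation of $\widetilde S$ — in particular handling the spiraling of infinite leaves into closed leaves, which is what makes the combinatorics "somewhat complicated" as noted after Proposition~\ref{prop:Polytope} — and track the four orientation cases of Proposition~\ref{prop:LengthFunctionOtherInvariants} so that the cocycle condition around each $c_i$ reduces precisely to $L_a^\Right(c_i)=L_a^\Left(c_i)$. A secondary technical nuisance is the passage from $\PGL$ (where flag reconstruction naturally lives) to $\PSL$, which is handled by the positive-eigenvalue lift of Lemma~\ref{lem:HitchinLoxodromicPositiveEigenvalues} and Remark~\ref{rem:ComputeEigenvalues}, together with the cohomological remark in the introduction that the $\PGL$/$\PSL$ distinction is controlled by $H^1(S;\Z_2)$.
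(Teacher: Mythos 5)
Your overall strategy coincides with the paper's: reconstruct from a point of $\mathcal P$ an equivariant family of flags indexed by endpoints of $\widetilde\lambda$, extract a monodromy homomorphism, and use continuity of the construction together with connectedness of $\mathcal P$, anchored at one known Hitchin point, to conclude that the output is a Hitchin representation valued in $\PSL$; the mutual-inverse check is then as in Lemma~\ref{lem:ParametrizationInverse}. But there is an overreach in Step 1 that you should not rely on: the assertion that positivity of the reconstructed triple and double ratios ``guarantees that $\mathcal F$ extends continuously to all of $\partial_\infty\widetilde S$ and satisfies conditions (3)--(5) of Proposition~\ref{prop:FlagMap}'' is not justified by anything you set up --- you control finitely many ratios per triangle and edge, and passing from a map on the countable set $\partial_\infty\widetilde\lambda$ to a continuous positive curve on the whole circle is a substantial theorem, not a formality. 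The paper never proves or needs this: it works throughout with a flag \emph{decoration} on $\partial_\infty\widetilde\lambda$ only, and obtains membership in $\Hit(S)$ (and in $\PSL$ rather than $\PGL$) precisely by the normalization-plus-connectedness argument that you mention only as an alternative. You must commit to that route, or supply the missing extension argument.

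The more serious gap is in Step 2, which you rightly identify as the crux but describe with the wrong mechanism. In the universal cover there is no cycle to close up: the construction is standard Fock--Goncharov inside each component $\widetilde U$ of $\widetilde S-\widetilde\lambda^{\mathrm{closed}}$ (Lemmas~\ref{lem:FlagMapAdjacentTriangles}--\ref{lem:MonodromyFlagMapAwayFromClosedLeaves}), and the issue is gluing the two one-sided constructions along a lifted closed leaf $\widetilde c_i$, onto which infinitely many triangles spiral. The Closed Leaf Inequality is what makes the eigenvalue ratios of each one-sided holonomy of $[c_i]$ greater than $1$, hence the eigenvalues distinct and the holonomy diagonalizable, so that well-defined attracting/repelling flags can be attached to the endpoints of $\widetilde c_i$; the Closed Leaf Equality is what makes the two one-sided holonomies have the same eigenvalues, hence conjugate by some $A\in\GL$ matching the flag data. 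Your outline omits entirely how the residual ambiguity --- post-composition by an element commuting with the holonomy and stabilizing the flag pair --- is killed: this is done with the closed-leaf shear invariants $\sigma_a(c_i)$ read along the lifted arcs $\widetilde k_i$, and one must verify independence of the chosen lift $\widetilde k_i$ using equivariance under $[c_i]$ (Lemma~\ref{lem:FlagMapGluedClosedLeaves}). Without this step the coordinates $\sigma_a(c_i)$ play no role in your $\Psi$, so $\Phi\circ\Psi=\mathrm{id}_{\mathcal P}$ cannot hold. Finally, ``the leaves generate $\pi_1(S)$ and the only relations are visible in the triangulation'' is not an argument; the homomorphism property and equivariance of $\rho$ follow instead from the uniqueness clauses of the gluing lemmas, as in Lemmas~\ref{lem:MonodromyFlagMapAwayFromClosedLeaves} and~\ref{lem:FlagMapMonodromy}.
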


The proof of Theorem~\ref{thm:Parametrization} will occupy all of the next section \S \ref{subsect:PhiHomeo}.

\subsection{Proof that the map $\Phi\colon \Hit(S) \to \mathcal P$ is a homeomorphism}
\label{subsect:PhiHomeo}

We begin with a small step. 

\begin{lem}
The map $\Phi\colon \Hit(S) \to \mathcal P$ is continuous. 
\end{lem}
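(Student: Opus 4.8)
The plan is to prove continuity of $\Phi$ by showing that each of its coordinate functions — the triangle invariants $\tau_{abc}^\rho(T_j, v_j)$ and the shear invariants $\sigma_a^\rho(g_j)$ and $\sigma_a^\rho(c_i)$ — depends continuously on the Hitchin representation $\rho$. Since each of these invariants is defined as the logarithm of a triple ratio $T_{abc}$ or a double ratio $D_a$ of flags of the form $\mathcal F_\rho(x)$ for suitable points $x \in \partial_\infty \widetilde S$, and since these triple and double ratios are manifestly continuous (in fact real-analytic) functions on the space of generic flag tuples (they are rational expressions in the coordinates of wedge products, with non-vanishing denominators on the generic locus), it suffices to establish that the relevant finite families of flags vary continuously with $\rho$. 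In other words, the whole statement reduces to the continuity, for each fixed finite set of points of $\partial_\infty \widetilde S$, of the evaluation map $\rho \mapsto \bigl(\mathcal F_\rho(x_1), \dots, \mathcal F_\rho(x_m)\bigr) \in \Flag^m$.

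The key step is therefore: \emph{the flag curve $\mathcal F_\rho$ depends continuously on $\rho$}, in the sense that the map $\Hit(S) \times \partial_\infty \widetilde S \to \Flag$, $(\rho, x) \mapsto \mathcal F_\rho(x)$, is continuous. First I would treat the easy case of the fixed points of group elements: if $x \in \partial_\infty \widetilde S$ is the attracting fixed point of some $\gamma \in \pi_1(S)$, then by Proposition~\ref{prop:FlagMap}(1), $\mathcal F_\rho(x)$ is the stable flag of $\rho(\gamma)$, which is built from the eigenspaces of the lift $\rho(\gamma)' \in \SL$; since $\rho(\gamma)$ has real eigenvalues of distinct absolute value by Proposition~\ref{prop:HitchinLoxodromic}, these eigenspaces — hence the stable flag — depend continuously (indeed real-analytically) on $\rho$, by standard perturbation theory for matrices with simple spectrum. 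The fixed points of elements of $\pi_1(S)$ are dense in $\partial_\infty \widetilde S$. To pass from this dense set to all of $\partial_\infty \widetilde S$, I would use an equicontinuity/uniformity argument: one shows that the family $\{\mathcal F_\rho\}_{\rho \in K}$ is equicontinuous for $\rho$ ranging in a compact neighbourhood $K$ of a given $\rho_0$ — this follows from the uniform version of the Anosov contraction property of Labourie \cite{Lab1} (the hyperconvexity and the attracting dynamics are uniform over compact families of Anosov representations) — and then continuity on the dense set plus equicontinuity yields continuity on the whole circle, jointly in $(\rho, x)$.

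The main obstacle I anticipate is precisely this last passage: upgrading continuity of $\rho \mapsto \mathcal F_\rho(x)$ from the dense set of fixed points to arbitrary $x$, uniformly in $\rho$. One clean way to do it is to invoke the structural stability / continuity of Anosov representations: the Anosov property is open in the representation variety and the associated boundary map varies continuously, a fact established in Labourie's work and made very explicit in \cite{Gui}; citing this directly would let one bypass the hands-on equicontinuity estimate. Alternatively, one can argue directly: for $x$ near a fixed point $x_0 = \gamma^+$, the flag $\mathcal F_\rho(x)$ lies near $\mathcal F_\rho(x_0)$ by a rate controlled uniformly in $\rho$ over $K$ (using that $\rho(\gamma^N)$ maps a whole interval of the circle close to $x_0$ into a small neighbourhood, with contraction constants uniform over $K$), and then a three-$\epsilon$ argument combining this with continuity at $x_0$ finishes it. Once joint continuity of the flag curve is in hand, continuity of $\Phi$ is immediate: each coordinate of $\Phi(\rho)$ is the composition of $\rho \mapsto (\mathcal F_\rho(x_1), \dots, \mathcal F_\rho(x_m))$ with a fixed continuous (log of a) cross-ratio-type function on the generic locus of $\Flag^m$, where genericity and positivity of the relevant tuples are guaranteed by Proposition~\ref{prop:FlagMap}(3)--(5).
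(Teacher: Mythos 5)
Your proposal is correct, and its core is the same as the paper's proof, which consists precisely of citing the fact that the Anosov property makes the flag curve $\mathcal F_\rho$ depend continuously on $\rho$ (with references to Labourie \cite{Lab1}, Guichard \cite{Gui} and Guichard--Wienhard \cite{GuiW}); your second option, invoking the continuity of the Anosov boundary map directly, is exactly what the paper does. One remark that simplifies your direct argument: the step you single out as the main obstacle --- upgrading continuity from the dense set of fixed points to all of $\partial_\infty \widetilde S$ by an equicontinuity/uniform contraction estimate --- is not needed for this lemma. The map $\Phi$ only involves the flags $\mathcal F_\rho(x)$ at the finitely many points $x$ arising as endpoints of leaves of $\widetilde\lambda$ (vertices of the triangles $\widetilde T_j$ and endpoints of the lifts $\widetilde c_i$, $\widetilde g_j$), and every such point is the attracting or repelling fixed point of a conjugate of some $[c_i]\in\pi_1(S)$, since all ends of the infinite leaves spiral around the closed leaves. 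Hence your first, elementary step --- the stable/unstable flag of $\rho(\gamma)$ varies continuously (indeed analytically) with $\rho$ because the spectrum is simple --- already covers every point that occurs, and combined with the continuity of the triple and double ratios on the generic locus (guaranteed by Proposition~\ref{prop:FlagMap}) it gives the lemma. This is in fact the observation the paper exploits later when proving that $\Phi$ is real analytic.
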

\begin{proof} This is an immediate consequence of the Anosov property, which implies that the flag curve $\mathcal F_\rho \colon \partial_\infty \widetilde S \to \Flag$ depends continuously on the Hitchin representation $\rho$. See \cite[\S2]{Lab1}. The reader can also consult \cite[\S2.6]{Gui} and  \cite[\S5.3]{GuiW}.
\end{proof}

We will construct a right inverse $\mathcal P \to \Hit(S)$ for $\Phi$. For this, suppose that we are given functions $\tau_{abc}$ and $\sigma_a$ that satisfy the Rotation Condition, the Closed Leaf Equalities and the Closed Leaf Inequalities, namely that define a point of the polytope $\mathcal P$. We will construct a Hitchin representation $\rho \in \Hit(S)$ whose invariants are exactly these functions $\tau_{abc}$ and $\sigma_a$.

Our strategy will be to reconstruct the flag curve $\mathcal F_\rho$ of \S \ref{subsect:FlagMap}. However, because we do not yet have a Hitchin representation, we will use a weaker version of this flag curve. 

Let $\partial_\infty \widetilde \lambda$ be the subset of the circle of infinity $\partial_\infty \widetilde S $ that consists of the end points of the leaves of the preimage $\widetilde \lambda \subset \widetilde S$ of the maximal geodesic lamination $\lambda$. More generally, if $\widetilde \lambda' \subset \widetilde \lambda$ is a family of leaves of $\widetilde \lambda$, then  $\partial_\infty \widetilde \lambda' \subset \partial_\infty \widetilde \lambda$ consists of the end points of leaves of $\widetilde\lambda'$. A \emph{flag decoration} for $\widetilde \lambda' $  is a (not necessarily continuous) map $\mathcal F \colon \partial_\infty \widetilde \lambda' \to \Flag$. 

A fundamental example of such a flag decoration of course comes from the restriction to $\partial_\infty \widetilde \lambda$ of the flag curve $\mathcal F_\rho\colon \partial_\infty \widetilde S \to \Flag$ of a Hitchin representation $\rho \colon \pi_1(S) \to \PSL$. 
Note that the definition of the triangle invariants $\tau_{abc}^\rho(T,v)$ and the shear invariants  $\sigma_a^\rho( g_j)$ and $\sigma_a^\rho(c_i)$  only uses the flag decoration $\mathcal F \colon \partial_\infty \widetilde \lambda \to \Flag$, and not the full flag curve $\mathcal F_\rho$.

We can copy these constructions for a general flag decoration $\mathcal F \colon \partial_\infty \widetilde \lambda' \to \Flag$. For instance, if $\widetilde T_j$ is a triangle component of $\widetilde S - \widetilde \lambda$ whose three sides are in the sublamination $\widetilde \lambda'$, and if $\widetilde v_j$ is a vertex of $\widetilde T_j$, we can often define triangle invariants
$$
\tau_{abc}^{\mathcal F} (\widetilde T_j, \widetilde  v_j) = \log T_{abc}
\bigl( \mathcal F(\widetilde v_j), \mathcal F(\widetilde v_j') , \mathcal F(\widetilde v_j'')\bigr)
$$ 
as in \S \ref{subsect:TriangleInvariants}, where the vertices $\widetilde v_j$, $\widetilde v_j'$,  $\widetilde v_j''$ of $\widetilde T$ are indexed in clockwise order. Of course, this triangle invariant only makes sense under the assumption that the triple ratio $T_{abc}\bigl( \mathcal F(\widetilde v_j), \mathcal F(\widetilde v_j') , \mathcal F(\widetilde v_j'')\bigr)$ is positive. 

Similarly, suppose that we are given an isolated leaf $\widetilde  g_i$ of $\widetilde \lambda$ such that  the two triangle components of $\widetilde S - \widetilde \lambda$ that are adjacent to $\widetilde \lambda$ have all their sides contained in $\widetilde\lambda'$. We can then define 
$$
\sigma_a^{\mathcal F}(\widetilde  g_i) = \log D_a(E, F, G, G') = \log D_a \bigl( \mathcal F(x), \mathcal F(y), \mathcal F(z), \mathcal F(z') \bigr)
$$
with the conventions of \S \ref{subsect:InfiniteLeafInvariants}. Again, this requires the double ratio $D_a(E, F, G, G')$ to be positive. 

Finally, let $\widetilde k_i$ be an arc lifting one of the arcs $k_i$ that are part of the topological data of \S \ref{subsect:TopData}. We then define
$$
\sigma_a^{\mathcal F}(\widetilde k_i) = \log D_a(E, F, G, G') = \log D_a \bigl( \mathcal F(x), \mathcal F(y), \mathcal F(z), \mathcal F(z') \bigr)
$$
with the conventions of \S \ref{subsect:ClosedLeafInvariant}, when the points $x$, $y$, $z$, $z'$ are in $\partial _\infty \widetilde \lambda'$ and when the double product $D_a(E, F, G, G')$ is positive.

Note that, without any equivariance condition for the flag decoration $\mathcal F$, there is no reason for the invariants $\tau_{abc}^{\mathcal F} (\widetilde T_j, \widetilde  v_j) $, $\sigma_a^{\mathcal F}(\widetilde  g_i)$ and $\sigma_a^{\mathcal F}(\widetilde k_i)$ to be invariant under the action of $\pi_1(S)$ on $\widetilde S$. 

After these preliminaries on flag decorations, we return to our construction of an inverse for the map $\Phi\colon \Hit(S) \to \mathcal P$. Consider functions $\tau_{abc}$ and $\sigma_a$ that define a point of the polytope $\mathcal P$, namely that satisfy the Rotation Conditions, the Closed Leaf Equalities, and the Closed Leaf Inequalities. 

These functions lift to the universal covering $\widetilde S$, and define numbers $\tau_{abc}(\widetilde T_j, \widetilde v_j)$, $\sigma_a(\widetilde  g_i)$ and $\sigma_a(\widetilde k_i)\in \R$ for every triangle component $\widetilde T_j$ of $\widetilde S-\widetilde \lambda$, every vertex $\widetilde v_j$ of $\widetilde T_j$, every isolated leaf $\widetilde g_i$ of $\widetilde \lambda$, and every arc $\widetilde k_i$ lifting one of the transverse arcs $k_i$. 

\begin{lem}
\label{lem:FlagMapTriangles}
For every component $\widetilde T_j$ of $\widetilde S - \widetilde \lambda$, there exists a flag decoration $\mathcal F\colon \partial _\infty \widetilde T_j \to \Flag$ for the boundary of $\widetilde T_j$ such that
$$
\tau_{abc}^{\mathcal F} (\widetilde T_j, \widetilde v_j) = \tau_{abc}(\widetilde T_j, \widetilde v_j)
$$
for every vertex $\widetilde v_j$ of $\widetilde T_j$ and every integers $a$, $b$, $c\geq1$ such that $a+b+c=n$ (and where $\partial _\infty \widetilde T_j$ consists of the three vertices of $\widetilde T_j$). 

In addition, $\mathcal F$ is unique up to composition with a map $\Flag \to \Flag$ induced by an element of $\PGL$. 
\end{lem}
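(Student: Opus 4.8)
The plan is to reduce the lemma to the existence-and-uniqueness statement for generic (indeed positive) flag triples, namely Proposition~\ref{prop:TripRatiosDetermineFlagTriples}, by transporting the data on $\widetilde T_j$ to a fixed model triangle. First I would fix, once and for all, an identification of the ideal triangle $\widetilde T_j$ with a standard ideal triangle whose vertices are labelled $0$, $1$, $\infty$, chosen so that the cyclic (clockwise) order on the boundary agrees with the clockwise order used to define the triangle invariants. Since $\partial_\infty\widetilde T_j$ is just the three vertices, a flag decoration $\mathcal F\colon\partial_\infty\widetilde T_j\to\Flag$ is literally a choice of an ordered triple of flags $(E,F,G)=\bigl(\mathcal F(\widetilde v_j),\mathcal F(\widetilde v_j'),\mathcal F(\widetilde v_j'')\bigr)$, with $\widetilde v_j,\widetilde v_j',\widetilde v_j''$ clockwise. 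So the lemma is exactly the assertion: given the numbers $\tau_{abc}(\widetilde T_j,\widetilde v_j)$ (for all $a,b,c\ge 1$, $a+b+c=n$) there is an ordered flag triple $(E,F,G)$, unique up to $\PGL$, with $\log T_{abc}(E,F,G)=\tau_{abc}(\widetilde T_j,\widetilde v_j)$, provided the assignment of $\tau$'s to the other two vertices is the one forced by the Rotation Condition.

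The next step is the bookkeeping: one must check that the three possible starting vertices give \emph{consistent} constraints on the single triple $(E,F,G)$. If we start at $\widetilde v_j$ we require $T_{abc}(E,F,G)=\exp\tau_{abc}(\widetilde T_j,\widetilde v_j)$; if we start at $\widetilde v_j'$ we require $T_{abc}(F,G,E)=\exp\tau_{abc}(\widetilde T_j,\widetilde v_j')$; if at $\widetilde v_j''$, $T_{abc}(G,E,F)=\exp\tau_{abc}(\widetilde T_j,\widetilde v_j'')$. By Lemma~\ref{lem:PermuteTripleRatios} we have $T_{bca}(F,G,E)=T_{abc}(E,F,G)$, so the second condition reads $T_{abc}(E,F,G)=\exp\tau_{bca}(\widetilde T_j,\widetilde v_j')$, and the Rotation Condition $\tau_{abc}(\widetilde T_j,\widetilde v_j)=\tau_{bca}(\widetilde T_j,\widetilde v_j')$ (and its iterate) says this is \emph{the same} system of equations; similarly for the third vertex. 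Hence the three vertex-indexed families of constraints collapse to one consistent prescription of all triple ratios $T_{abc}(E,F,G)=t_{abc}:=\exp\tau_{abc}(\widetilde T_j,\widetilde v_j)>0$.

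Now I would invoke Proposition~\ref{prop:TripRatiosDetermineFlagTriples} directly. Its second half produces a generic flag triple $(E,F,G)$ with $T_{abc}(E,F,G)=t_{abc}$ for all admissible $a,b,c$; since each $t_{abc}>0$, this triple is moreover positive in the sense of \S\ref{subsect:Positivity}. Defining $\mathcal F$ on the three vertices of $\widetilde T_j$ by $\mathcal F(\widetilde v_j)=E$, $\mathcal F(\widetilde v_j')=F$, $\mathcal F(\widetilde v_j'')=G$ gives the required flag decoration, and by the consistency check above it realizes $\tau_{abc}^{\mathcal F}(\widetilde T_j,\widetilde w)=\tau_{abc}(\widetilde T_j,\widetilde w)$ simultaneously for all three vertices $\widetilde w$. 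For uniqueness: if $\mathcal F'$ is another such decoration, the corresponding triple $(E',F',G')$ has the same triple ratios $t_{abc}$, so the first half of Proposition~\ref{prop:TripRatiosDetermineFlagTriples} gives $g\in\PGL$ with $g\cdot(E',F',G')=(E,F,G)$, i.e. $\mathcal F=g\circ\mathcal F'$; this is exactly uniqueness up to the $\PGL$-action on $\Flag$.

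I do not expect a serious obstacle here: the only real content is Proposition~\ref{prop:TripRatiosDetermineFlagTriples}, which is quoted from \cite{FoG1}, and the rest is the permutation bookkeeping via Lemma~\ref{lem:PermuteTripleRatios} together with the Rotation Condition. The one point to state carefully is the orientation convention — that ``clockwise around $\widetilde T_j$'' matches the cyclic order implicit in $T_{abc}$, $T_{bca}$, $T_{cab}$ — so that the three vertex choices really do yield cyclically permuted (hence, by Lemma~\ref{lem:PermuteTripleRatios}, identical) systems rather than the inverted system coming from a transposition. Once that is pinned down, the proof is a two-line reduction.
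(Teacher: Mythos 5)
Your proposal is correct and follows essentially the same route as the paper: existence and uniqueness come directly from Proposition~\ref{prop:TripRatiosDetermineFlagTriples} applied to the prescribed triple ratios $\exp\tau_{abc}(\widetilde T_j,\widetilde v_j)$, and the Rotation Condition together with Lemma~\ref{lem:PermuteTripleRatios} ensures the three vertex-based prescriptions are consistent. Your write-up simply spells out the permutation bookkeeping that the paper leaves implicit.
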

\begin{proof}
For a single vertex $\widetilde v_j$, this is just another way of saying that there exists a flag triple whose triple ratios are $\exp \tau_{abc}(\widetilde T_j, \widetilde v_j)$, as asserted by Proposition~\ref{prop:TripRatiosDetermineFlagTriples}. The property for all three vertices of $\widetilde T_j$ then follows from the Rotation Condition. The uniqueness property is also a consequence of Proposition~\ref{prop:TripRatiosDetermineFlagTriples}.
\end{proof}

We now put two adjacent triangles together. 

\begin{lem}
\label{lem:FlagMapAdjacentTriangles}
Let $\widetilde T_j$ and $\widetilde T_{j'}'$ be two adjacent components of  $\widetilde S - \widetilde \lambda$, separated by a leaf $\widetilde  g_i$ of $\widetilde\lambda$. Then, if $\widetilde\lambda'$ denotes the union of the sides of $\widetilde T_j$ and $\widetilde T_{j'}'$, there exists a flag decoration 
$\mathcal F\colon \partial _\infty \widetilde \lambda' \to \Flag$  such that
\begin{enumerate}
\item $
\tau_{abc}^{\mathcal F} (\widetilde T_j, \widetilde v) = \tau_{abc}(\widetilde T_j, \widetilde v)
$
for every vertex $\widetilde v$ of $\widetilde T_j$ and every integers $a$, $b$, $c\geq1$ such that $a+b+c=n$;
\item
$
\tau_{abc}^{\mathcal F} (\widetilde T_{j'}', \widetilde v') = \tau_{abc}(\widetilde T_{j'}', \widetilde v')
$
for every vertex $\widetilde v'$ of $\widetilde T_{j'}'$ and every integers $a$, $b$, $c\geq1$ such that $a+b+c=n$;
\item $\sigma_a^{\mathcal F}(\widetilde  g_i) =\sigma_a(\widetilde  g_i) $ for every integer $a$ with $1\leq a \leq n-1$. 
\end{enumerate}

In addition, the flag decoration $\mathcal F\colon \partial _\infty \widetilde \lambda' \to \Flag$  is unique up to post-composition with the action of an element of $\PGL$ on $\Flag$. 
\end{lem}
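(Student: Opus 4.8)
The plan is to build $\mathcal F$ in two stages: first decorate the three vertices of $\widetilde T_j$ using Lemma~\ref{lem:FlagMapTriangles}, then adjoin a fourth flag at the remaining vertex of $\widetilde T_{j'}'$, chosen so as to realize simultaneously the triangle invariants of $\widetilde T_{j'}'$ and the shear invariants $\sigma_a(\widetilde g_i)$. With the conventions of \S\ref{subsect:InfiniteLeafInvariants}, label the vertices so that $x$, $y$ are the endpoints of $\widetilde g_i$, with $\widetilde T_j$ to the left of $\widetilde g_i$ and $z$ its third vertex, and $z'$ the third vertex of $\widetilde T_{j'}'$; thus $\partial_\infty\widetilde\lambda' = \{x,y,z,z'\}$. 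Lemma~\ref{lem:FlagMapTriangles} provides a flag decoration $\mathcal F$ on $\{x,y,z\}$ realizing the triangle invariants of $\widetilde T_j$, unique up to post-composition with $\PGL$; fix one, and set $E=\mathcal F(x)$, $F=\mathcal F(y)$, $G=\mathcal F(z)$, which is positive, hence generic. Using the existence clause of Proposition~\ref{prop:TripRatiosDetermineFlagTriples}, the transitivity of $\PGL$ on generic flag pairs recalled in \S\ref{subsect:TripleRatios}, and Lemma~\ref{lem:PermuteTripleRatios} to account for the clockwise ordering of the vertices of $\widetilde T_{j'}'$, I would then choose a flag $G'_0$ so that $E$, $F$, $G'_0$ realize the triangle invariants of $\widetilde T_{j'}'$ and $(E,F,G'_0)$ is positive. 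It remains to correct $G'_0$, without disturbing $E$, $F$, $G$ or any triangle invariant, so that the double ratios become $\exp\sigma_a(\widetilde g_i)$.

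The heart of the argument is the residual gauge freedom. The stabilizer of the generic pair $(E,F)$ in $\PGL$ is the maximal torus $T$ of projectivized diagonal matrices $D=\mathrm{diag}(d_1,\dots,d_n)$ in a basis $\epsilon_1,\dots,\epsilon_n$ adapted to $(E,F)$, so that $E^{(a)}=\langle\epsilon_1,\dots,\epsilon_a\rangle$ and $F^{(a)}=\langle\epsilon_{n-a+1},\dots,\epsilon_n\rangle$; replacing $G'_0$ by $D\cdot G'_0$ fixes $E$, $F$ and $G$ and, by $\PGL$--invariance, all triangle invariants of $(E,F,D\cdot G'_0)$. On the other hand, writing $G^{(1)}=\langle\sum_i g_i\epsilon_i\rangle$ and $G'_0{}^{(1)}=\langle\sum_i g'_i\epsilon_i\rangle$, a short computation in $\Lambda^n(\R^n)$ from the definition in \S\ref{subsect:DoubleRatios} should give
$$
D_a(E,F,G,G'_0)=\pm\,\frac{g_{a+1}\,g'_a}{g_a\,g'_{a+1}},
$$
the sign depending only on $a$ and $n$, and $g_a$, $g_{a+1}$, $g'_a$, $g'_{a+1}$ being nonzero by genericity of $(E,F,G)$ and $(E,F,G'_0)$. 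Hence passing from $G'_0$ to $D\cdot G'_0$ multiplies $D_a$ by $d_a/d_{a+1}$. Since $(d_1,\dots,d_n)\mapsto(d_1/d_2,\dots,d_{n-1}/d_n)$ maps $(\R^*)^n$ onto $(\R^*)^{n-1}$ with kernel the scalar matrices, there is a $D\in T$, unique modulo scalars, with $D_a(E,F,G,D\cdot G'_0)=\exp\sigma_a(\widetilde g_i)$ for every $a$. Setting $\mathcal F(z')=D\cdot G'_0$ then produces a flag decoration satisfying conditions (1)--(3).

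For uniqueness, suppose $\mathcal F''$ also satisfies (1)--(3). By Proposition~\ref{prop:TripRatiosDetermineFlagTriples} applied to the flag triples at the vertices of $\widetilde T_j$, some element of $\PGL$ carries $\mathcal F''$ to a decoration agreeing with $\mathcal F$ on $\{x,y,z\}$; after this normalization $\mathcal F''(z')$ and $\mathcal F(z')$ yield the same triple ratios with the pair $(E,F)$, so $\mathcal F''(z')=h\cdot\mathcal F(z')$ for some $h\in\PGL$ fixing $E$ and $F$, that is, $h\in T$. The computation above shows $h$ multiplies each $D_a(E,F,G,\,\cdot\,)$ by $d_a/d_{a+1}$; since $\mathcal F''$ has the same shear invariants as $\mathcal F$, this forces $d_a/d_{a+1}=1$ for all $a$, so $h$ is a scalar and $\mathcal F''=\mathcal F$. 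Thus $\mathcal F$ is unique up to post-composition with $\PGL$.

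I expect the main obstacle to be precisely this gauge computation: one must check that the double ratio, read in the adapted basis, really isolates the ratio $g_{a+1}g'_a/(g_ag'_{a+1})$ of the two ``middle'' coordinates of $G^{(1)}$ and $G'_0{}^{(1)}$, and that the relevant wedge products do not vanish (which is where positivity, hence genericity, of $(E,F,G)$ and $(E,F,G'_0)$ enters). The conceptual point is that the $(n-1)$--dimensional torus $T$ acts on the $n-1$ double ratios $D_1,\dots,D_{n-1}$ through the independent scalings $d_a/d_{a+1}$, so the count is tight with no slack --- which is also what makes uniqueness work. The auxiliary choices (clockwise orderings of vertices, which triangle lies to the left of $\widetilde g_i$) are absorbed by Lemma~\ref{lem:PermuteTripleRatios} and the Rotation Condition and do not affect the structure of the argument.
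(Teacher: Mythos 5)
Your proposal is correct and follows essentially the same route as the paper: realize the triangle invariants first, then exploit the residual action of the stabilizer of the generic pair $(E,F)$ (the projectivized diagonal torus in an adapted basis), which scales each double ratio $D_a$ by $d_a/d_{a+1}$, to hit the prescribed shear invariants, with uniqueness following from the same tight count. The only cosmetic differences are that the paper builds the second triangle's decoration via Lemma~\ref{lem:FlagMapTriangles} and glues with an element $A\in\PGL$ matching the pairs rather than choosing $G'_0$ directly, and that the sign in your double-ratio formula is exactly $-1$ (neither of which affects the argument).
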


\begin{proof} Let $\mathcal F\colon \partial _\infty \widetilde T_j \to \Flag$ and $\mathcal F'\colon \partial _\infty \widetilde T_{j'}' \to \Flag$ be given by Lemma~\ref{lem:FlagMapTriangles}. We only need to arrange that $\mathcal F$ and $\mathcal F'$ coincide on the end points of $\widetilde  g_i$, and that Condition~(3) is satisfied. 

Let $x$ and $y$ be the positive and negative end points of $\widetilde g_i$ and, without loss of generality, assume that $\widetilde T_j$ is on the left of $\widetilde g_i$ for the orientation of this leaf. Consider the flags $E= \mathcal F(x)$, $F= \mathcal F(y)$, $E'= \mathcal F'(x)$, $F'= \mathcal F'(y)\in \Flag$.

Because the triangle invariants $\tau_{abc}^{\mathcal F} (\widetilde T_{j'}, \widetilde v)$ are defined, the flag triple $(E,F,G)$ associated by $\mathcal F$ to the three vertices of $\widetilde T_j$ is positive. In particular, the flag pair $(E,F)$ is generic. Similarly, the flag pair $(E', F')$ is also generic. 

Therefore, by elementary linear algebra, there exists an element $A\in \PGL$ that sends $E'$ to $E$ and $F'$ to $F$. As a first approximation, we can then define the flag decoration $\mathcal F\colon \partial _\infty \widetilde \lambda' \to \Flag$ to coincide with the original $\mathcal F$ on $ \partial _\infty \widetilde T_j$, and with $A\circ \mathcal F'$ on $ \partial _\infty \widetilde T_{j'}' $. 

If $z$ and $z'$ are the third vertices of $\widetilde T_j$ and $\widetilde T_{j'}'$, respectively, and if we consider the flags $G= \mathcal F(z)$ and $G' = \mathcal F(z')= A \circ \mathcal F'(z')$, then
$$
\sigma_a^{\mathcal F}(\widetilde  g_i) = \log D_a(E,F,G,G').$$
Of course, at this point, there is no guarantee that the double ratio $D_a(E,F,G,G')$  is positive, so that $\sigma_a^{\mathcal F}(\widetilde  g_i) $ does not necessarily make sense. 

We first compute the double ratio $D_a(E,F,G,G')$ more explicitly. Choose a basis $e_1$, $e_2$, \dots, $e_n$ for $\R^n$ such that each $e_a$ generates the line $E^{(a)} \cap F^{(n-a+1)}$. Express generators for the lines  $G^{(1)}$ and $G'{}^{(1)}$ as  $g_1 = \sum_{a=1}^n \gamma_a e_a$ and $g_1' = \sum_{a=1}^n \gamma_a' e_a$, respectively. Then, if we use the non-zero elements $e^{(b)} = e_1\wedge e_2 \wedge \dots \wedge  e_b \in \Lambda^b(E^{(b)})$ and $f^{(b)} = e_{n-b+1}\wedge e_{n-b+2} \wedge \dots \wedge  e_n \in \Lambda^b(F^{(b)})$ in our computation,
\begin{align*}
D_a(E,F,G,G') &=  - \frac
{ e^{(a)} \wedge  f^{(n-a-1)}\wedge  g_1}
{ e^{(a)} \wedge  f^{(n-a-1)}\wedge  g_1'}\ 
\frac
{ e^{(a-1)} \wedge  f^{(n-a)}\wedge  g_1'}
{ e^{(a-1)} \wedge  f^{(n-a)}\wedge  g_1}\\
&= -\frac{\gamma_{a+1}}{\gamma_{a+1}'} \frac{\gamma_{a}'}{\gamma_{a}}.
\end{align*}

We will now take advantage of the fact that there were many possible choices for $A\in \PGL$. Indeed, we can always post-compose $A$ with an element $B\in \PGL$ respecting the generic flag pair $(E,F)$. Such a $B\in \PGL$ respects each of the lines $ E^{(a)} \cap F^{(n-a+1)}$, and consequently can be represented by a matrix of $\GL$ which acts on each $L_a$ by multiplication by $\beta_a \in \R^*$. Replacing $A$ by $B\circ A$ then replaces each $\gamma_b'$ by $\beta_b \gamma_b'$. We can therefore adjust the matrix $A\in \PGL$ so that 
$$
D_a(E,F,G,G') = \exp \sigma_a(\widetilde  g_i) 
$$
for every integer $a$ with $1\leq a \leq n-1$, and therefore so that $\sigma_a^{\mathcal F}(\widetilde  g_i)  = \sigma_a(\widetilde  g_i) $ for all $a$. 

For such a choice of $A$, the flag decoration $\mathcal F\colon \partial _\infty \widetilde \lambda' \to \Flag$ that coincides with the original $\mathcal F$ on $ \partial _\infty \widetilde T_j$ and with $A\circ \mathcal F'$ on $ \partial _\infty \widetilde T_{j'}' $ then satisfies the conclusions of Lemma~\ref{lem:FlagMapAdjacentTriangles}. 

In fact, in the above argument, the adjustment factor $B$ is unique as an element of $\PGL$. Using the uniqueness in Lemma~\ref{lem:FlagMapTriangles}, the uniqueness part of Lemma~\ref{lem:FlagMapAdjacentTriangles} easily follows. 
\end{proof}

We now put infinitely many adjacent triangles together. Let  $\lambda^{\mathrm{closed}}$ denote the union of the closed leaves of $\lambda$, and let $\widetilde  \lambda^{\mathrm{closed}}$ be its preimage in the universal covering $\widetilde S$. 

\begin{lem}
\label{lem:FlagMapOutsideClosedLeaves}
Let $\widetilde U$ be a component of $\widetilde S - \widetilde\lambda^{\mathrm{closed}}$. Then, there exists a flag decoration $\mathcal F \colon \partial_\infty (\widetilde \lambda \cap \widetilde U) \to \Flag$ such that
\begin{enumerate}
\item $
\tau_{abc}^{\mathcal F} (\widetilde T_j, \widetilde v_j) = \tau_{abc}(\widetilde T_j, \widetilde v_j)
$
for every component $\widetilde T_j$ of $\widetilde U - \widetilde\lambda$, for every vertex $\widetilde v$ of $\widetilde T_j$, and for every integers $a$, $b$, $c\geq1$ such that $a+b+c=n$;

\item $\sigma_a^{\mathcal F}(\widetilde  g_i) =\sigma_a(\widetilde  g_i) $ for every leaf $\widetilde  g_i$ of $\widetilde\lambda \cap \widetilde U$ and for every integer $a$ with $1\leq a \leq n-1$. 
\end{enumerate}

In addition, the flag decoration $\mathcal F \colon \partial_\infty (\widetilde \lambda \cap \widetilde U) \to \Flag$  is unique up to postcomposition with the action of an element of $\PGL$ on $\Flag$. 
\end{lem}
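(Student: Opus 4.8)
The proof follows the same pattern as Lemmas~\ref{lem:FlagMapTriangles} and \ref{lem:FlagMapAdjacentTriangles}, but now we must assemble infinitely many triangles along the (infinitely many) leaves of $\widetilde\lambda$ contained in the open set $\widetilde U$. First I would fix a base triangle $\widetilde T_{j_0}$ in $\widetilde U$ and equip it with a flag decoration via Lemma~\ref{lem:FlagMapTriangles}, which determines $\mathcal F$ on the three vertices of $\widetilde T_{j_0}$ uniquely up to the action of $\PGL$; normalizing this choice kills the overall $\PGL$-ambiguity and will give the asserted uniqueness at the end. The key observation is that the dual tree of $\widetilde\lambda\cap\widetilde U$ (vertices = triangle components of $\widetilde U-\widetilde\lambda$, edges = leaves of $\widetilde\lambda\cap\widetilde U$) is a \emph{tree}: $\widetilde U$ is simply connected and the leaves cut it into the triangle components, so any two triangle components are joined by a unique finite chain of triangles crossing leaves. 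Thus for any triangle $\widetilde T_j$ in $\widetilde U$ there is a canonical finite path $\widetilde T_{j_0}=\widetilde T^{(0)},\widetilde T^{(1)},\dots,\widetilde T^{(m)}=\widetilde T_j$, and I would define $\mathcal F$ on the vertices of $\widetilde T_j$ by induction on $m$: having defined $\mathcal F$ on $\widetilde T^{(m-1)}$, apply the ``two adjacent triangles'' construction of Lemma~\ref{lem:FlagMapAdjacentTriangles} to the pair $\widetilde T^{(m-1)},\widetilde T^{(m)}$ separated by the leaf $\widetilde g$, using the prescribed number $\sigma_a(\widetilde g)$, to extend $\mathcal F$ to the one new vertex of $\widetilde T^{(m)}$.

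Three things need checking. \emph{Well-definedness:} a vertex $\widetilde v$ of $\widetilde\lambda\cap\widetilde U$ may lie on many triangles, hence be reached by many paths; but since the dual object is a tree, the subtree of triangles having $\widetilde v$ as a vertex is connected, so any two paths to triangles at $\widetilde v$ differ only by moves that fix $\widetilde v$, and the value $\mathcal F(\widetilde v)$ produced by Lemma~\ref{lem:FlagMapTriangles} at each such triangle already agrees once the first one in the subtree is fixed (the triple ratios at $\widetilde v$ from different triangles through $\widetilde v$ are determined by the same $\tau$ data and the Rotation Condition). Hence $\mathcal F(\widetilde v)$ depends only on $\widetilde v$. \emph{Genericity along each leaf:} to even run Lemma~\ref{lem:FlagMapAdjacentTriangles} at each step I need the flag pair at the two endpoints of each leaf $\widetilde g$ to be generic; this holds because the flag triple assigned to each triangle is positive, hence generic, by construction, so each of its three flag pairs is generic. \emph{The invariants come out right:} by the inductive construction each triangle has exactly its prescribed triple ratios (Lemma~\ref{lem:FlagMapTriangles}) and each leaf of $\widetilde\lambda\cap\widetilde U$ has exactly its prescribed double ratio (Lemma~\ref{lem:FlagMapAdjacentTriangles}), which is conditions (1) and (2).

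For the uniqueness clause, suppose $\mathcal F$ and $\mathcal F'$ both satisfy (1) and (2). By the uniqueness in Lemma~\ref{lem:FlagMapTriangles} applied to $\widetilde T_{j_0}$, there is $A\in\PGL$ with $\mathcal F'=A\circ\mathcal F$ on $\partial_\infty\widetilde T_{j_0}$; replacing $\mathcal F'$ by $A^{-1}\circ\mathcal F'$ we may assume they agree on $\widetilde T_{j_0}$. Then the uniqueness statement in Lemma~\ref{lem:FlagMapAdjacentTriangles} propagates equality across each leaf of the tree by induction on the distance $m$ to $\widetilde T_{j_0}$: if $\mathcal F=\mathcal F'$ on $\widetilde T^{(m-1)}$, then since both agree there and both realize the prescribed $\sigma_a(\widetilde g)$ on the separating leaf $\widetilde g$, Lemma~\ref{lem:FlagMapAdjacentTriangles} forces them to agree on $\widetilde T^{(m)}$ as well. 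Since every vertex of $\widetilde\lambda\cap\widetilde U$ lies on some triangle, $\mathcal F=\mathcal F'$ everywhere.

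\emph{Main obstacle.} The only subtle point is the well-definedness check at a vertex $\widetilde v$ shared by infinitely many triangles of $\widetilde\lambda\cap\widetilde U$ (which happens precisely when $\widetilde v$ is an endpoint of a closed-leaf lift on $\partial\widetilde U$, around which the triangles of $\widetilde U$ spiral). One must argue that the inductively propagated value of $\mathcal F(\widetilde v)$ is the same for all these triangles — equivalently, that traveling once around $\widetilde v$ through this infinite fan of triangles does not introduce holonomy. This is where the \emph{tree} structure is essential: although infinitely many triangles meet at $\widetilde v$, they are linearly ordered along an edge-path in the tree (no loop), so there is no monodromy to cancel; the equality of the various $\mathcal F(\widetilde v)$ values then follows from the uniqueness in Lemma~\ref{lem:FlagMapTriangles} at each triangle through $\widetilde v$, anchored at the first such triangle in the tree order. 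I expect the bulk of the writing to be devoted to making this tree/fan picture precise.
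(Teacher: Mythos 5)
Your proposal is correct and follows essentially the same route as the paper: both use the dual tree of $\widetilde\lambda\cap\widetilde U$ to order the triangle components so that each new triangle meets the previously decorated region along exactly one leaf, then glue inductively via Lemma~\ref{lem:FlagMapAdjacentTriangles}, matching on the shared triangle by the uniqueness part of Lemma~\ref{lem:FlagMapTriangles}, with uniqueness of $\mathcal F$ propagated the same way. The well-definedness issue you flag at a vertex shared by infinitely many triangles is handled in the paper implicitly by this enumeration (the flag at such a vertex is assigned once and never re-assigned, since later fan triangles attach along leaves ending at that vertex); your ``no monodromy along the fan'' explanation is the right reason, though the parenthetical appeal to the Rotation Condition across \emph{different} triangles is not what makes it work.
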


\begin{proof}
By construction, all the leaves of $\widetilde\lambda \cap \widetilde U$ are isolated. Looking at the dual tree of the cell decomposition of $\widetilde U$ induced by $\widetilde\lambda \cap \widetilde U$, we can therefore list all the components of $\widetilde U - \widetilde\lambda$ as $\widetilde T_{j_1}$, $\widetilde T_{j_2}$, \dots, $\widetilde T_{j_k}$, \dots{} in such a way that each $\widetilde T_{j_k}$ is adjacent to exactly one $\widetilde T_{j_l}$ with $l<k$. 

We construct $\mathcal F$  on the closure $\widetilde U_k$ of $\widetilde T_{j_1} \cup \widetilde T_{j_2}\cup \dots \cup \widetilde T_{j_k}$, by induction on $k$.  The induction starts with Lemma~\ref{lem:FlagMapTriangles}. 

Suppose that we have constructed a flag decoration $\mathcal F_{k-1} \colon \partial_\infty (\widetilde \lambda \cap \widetilde U_{k-1}) \to \Flag$ with the desired triangle and shear invariants. By hypothesis, the triangle $\widetilde T_{j_k}$ is adjacent to a triangle $\widetilde T_{j_l}$ with $l<k$; namely the closures of  $\widetilde T_{j_k}$ and $\widetilde T_{j_l}$ meet along a leaf $\widetilde g_i$ of $\widetilde\lambda \cap \widetilde U$. 

Apply Lemma~\ref{lem:FlagMapAdjacentTriangles} to the two triangles $\widetilde T_{j_k}$ and $\widetilde T_{j_l}$. This provides a flag decoration $\mathcal F' \colon \partial_\infty \widetilde T_{j_k} \cup \partial_\infty \widetilde T_{j_l} \to \Flag$ whose triangle and shear invariants are as requested. Composing $\mathcal F'$ with an appropriate element of $\PGL$ (using the uniqueness part of Lemma~\ref{lem:FlagMapTriangles}), we can arrange that $\mathcal F'$ coincides with $\mathcal F_{k-1}$ on $ \partial_\infty \widetilde T_{j_l}$. Since $ \partial_\infty (\widetilde \lambda \cap \widetilde U_k) =  \partial_\infty (\widetilde \lambda \cap \widetilde U_{k-1}) \cup \partial_\infty \widetilde T_{j_k}$, we can then define $\mathcal F_k \colon \partial_\infty (\widetilde \lambda \cap \widetilde U_k) \to \Flag$ to coincide with $\mathcal F_{k-1}$ on $\partial_\infty (\widetilde \lambda \cap \widetilde U_{k-1})$ and with $\mathcal F'$ on $\partial_\infty \widetilde T_{j_k}$. Then, this flag decoration for $\widetilde \lambda \cap \widetilde U_k$ has the required triangle and shear invariants, and proves the induction step. 

This provides a family of flag decorations $\mathcal F_k \colon \partial_\infty (\widetilde \lambda \cap \widetilde U_k) \to \Flag$ such that $\mathcal F_k$ coincides with $\mathcal F_l$ on  $\partial_\infty (\widetilde \lambda \cap \widetilde U_l)$ whenever $k>l$. Since $\widetilde U$ is the union of the $\widetilde U_k$, these $\mathcal F_k$ then give a flag decoration $\mathcal F \colon \partial_\infty (\widetilde \lambda \cap \widetilde U) \to \Flag$ with the requested triangle and shear invariants. 

The uniqueness part of the statement easily follows from that of Lemma~\ref{lem:FlagMapAdjacentTriangles}. 
\end{proof}

\begin{lem}
\label{lem:MonodromyFlagMapAwayFromClosedLeaves}
Under the hypotheses and conclusions of Lemma~{\upshape\ref{lem:FlagMapOutsideClosedLeaves}}, let $\pi_1(U)$ be the stabilizer of $\widetilde U$ in $\pi_1(S)$ (corresponding to the fundamental group of the projection $U$ of $\widetilde U$ onto $S$, for appropriate base points). Then the flag decoration $\mathcal F \colon \partial_\infty (\widetilde \lambda \cap \widetilde U) \to \Flag$ is $\rho$--equivariant for a unique homomorphism $\rho \colon \pi_1(U) \to \PGL$.
\end{lem}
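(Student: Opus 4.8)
The plan is to deduce everything from the uniqueness clause at the end of Lemma~\ref{lem:FlagMapOutsideClosedLeaves}, combined with the observation that the numbers $\tau_{abc}(\widetilde T_j,\widetilde v_j)$ and $\sigma_a(\widetilde g_i)$ are, by construction, pulled back from functions on $S$ and are therefore invariant under the action of $\pi_1(S)$ on $\widetilde S$.

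First I would fix $\gamma\in\pi_1(U)$ and note that $\gamma$ preserves $\widetilde U$ by definition of $\pi_1(U)$ and preserves $\widetilde\lambda$ because $\widetilde\lambda$ is $\pi_1(S)$--invariant; hence $\gamma$ maps $\widetilde\lambda\cap\widetilde U$ to itself, and, being orientation preserving, permutes the leaves $\widetilde g_i$ and the triangle components $\widetilde T_j$ of $\widetilde\lambda\cap\widetilde U$ compatibly with the cyclic order at infinity and with the chosen orientations of leaves, inducing a homeomorphism of $\partial_\infty(\widetilde\lambda\cap\widetilde U)$. I would then check that $\mathcal F\circ\gamma$ is again a flag decoration satisfying conclusions (1)--(2) of Lemma~\ref{lem:FlagMapOutsideClosedLeaves}: for every triangle $\widetilde T_j$ and vertex $\widetilde v_j$,
$$
\tau_{abc}^{\mathcal F\circ\gamma}(\widetilde T_j,\widetilde v_j)=\tau_{abc}^{\mathcal F}(\gamma\widetilde T_j,\gamma\widetilde v_j)=\tau_{abc}(\gamma\widetilde T_j,\gamma\widetilde v_j)=\tau_{abc}(\widetilde T_j,\widetilde v_j),
$$
where the middle equality uses that $\mathcal F$ realizes the prescribed invariants and the last uses $\pi_1(S)$--invariance of $\tau_{abc}$; the identical computation gives $\sigma_a^{\mathcal F\circ\gamma}(\widetilde g_i)=\sigma_a(\widetilde g_i)$. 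By the uniqueness part of Lemma~\ref{lem:FlagMapOutsideClosedLeaves} there is then an element $\rho(\gamma)\in\PGL$ with $\mathcal F\circ\gamma=\rho(\gamma)\circ\mathcal F$, which is precisely the equivariance identity $\mathcal F(\gamma x)=\rho(\gamma)\bigl(\mathcal F(x)\bigr)$.

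Finally I would establish that $\rho(\gamma)$ is the unique such element and that $\gamma\mapsto\rho(\gamma)$ is a homomorphism; both reduce to the remark that the only element of $\PGL$ fixing every flag in the image of $\mathcal F$ is the identity. Indeed, the restriction of $\mathcal F$ to the three vertices of a single triangle $\widetilde T_j$ is a generic (in fact positive) flag triple $(E,F,G)$; an element of $\PGL$ fixing $E$ and $F$ is diagonal in a basis adapted to the pair $(E,F)$, and fixing the line $G^{(1)}$ — which meets every coordinate hyperplane transversally by genericity — forces all its diagonal entries to be equal, so the element is scalar and hence trivial in $\PGL$. Granting this, uniqueness of $\rho(\gamma)$ is immediate, and from
\begin{align*}
\rho(\gamma_1\gamma_2)\circ\mathcal F &=\mathcal F\circ(\gamma_1\gamma_2)=\bigl(\mathcal F\circ\gamma_1\bigr)\circ\gamma_2\\
&=\bigl(\rho(\gamma_1)\circ\mathcal F\bigr)\circ\gamma_2=\rho(\gamma_1)\circ\rho(\gamma_2)\circ\mathcal F
\end{align*}
we conclude $\rho(\gamma_1\gamma_2)=\rho(\gamma_1)\rho(\gamma_2)$; uniqueness of the homomorphism $\rho$ follows the same way, since a second $\rho'$ making $\mathcal F$ equivariant would satisfy $\rho(\gamma)^{-1}\rho'(\gamma)\circ\mathcal F=\mathcal F$. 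I do not anticipate a genuine obstacle here: the only point that demands care is this triviality-of-stabilizer remark, which is in any case the mechanism already underlying the uniqueness clause of Lemma~\ref{lem:FlagMapOutsideClosedLeaves} that the argument invokes.
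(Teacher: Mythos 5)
Your proof is correct, and it takes a slightly different (and arguably cleaner) route than the paper's. The paper builds $\rho(\gamma)$ locally: it first produces, for each triangle $\widetilde T_j$, an element of $\PGL$ carrying the flag triple of $\widetilde T_j$ to that of $\gamma\widetilde T_j$ via the uniqueness clause of Lemma~\ref{lem:FlagMapTriangles}, and then shows this element does not depend on $\widetilde T_j$ by re-running the inductive gluing of Lemma~\ref{lem:FlagMapOutsideClosedLeaves} and invoking the uniqueness part of Lemma~\ref{lem:FlagMapAdjacentTriangles} at each step. You instead observe that $\mathcal F\circ\gamma$ is itself a flag decoration of $\partial_\infty(\widetilde\lambda\cap\widetilde U)$ realizing the same prescribed invariants (using that $\gamma$ preserves $\widetilde U$, $\widetilde\lambda$, the lifted leaf orientations and the clockwise order of vertices, and that $\tau_{abc}$, $\sigma_a$ are pulled back from $S$), and then apply the global uniqueness clause of Lemma~\ref{lem:FlagMapOutsideClosedLeaves} in one stroke to get $\mathcal F\circ\gamma=\rho(\gamma)\circ\mathcal F$. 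This packages the paper's independence-of-triangle argument into a uniqueness statement already proved, at the cost of nothing; the paper's version is more hands-on but amounts to the same mechanism. Your treatment of the homomorphism property and of uniqueness is also more explicit than the paper's (which says ``easily seen'' and appeals to freeness of the $\PGL$--action on generic flag triples): your observation that an element of $\PGL$ fixing $E$, $F$ and the line $G^{(1)}$ of a generic triple must be scalar is exactly the uniqueness half of the paper's later Lemma~\ref{lem:IncompleteFlagTripleUnique}, and your verification of it (all coordinates of a generator of $G^{(1)}$ in the basis adapted to $(E,F)$ are non-zero, since the wedge products $e^{(a-1)}\wedge f^{(n-a)}\wedge g^{(1)}$ are non-zero by genericity) is sound.
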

\begin{proof}
Consider an element $\gamma \in \pi_1(U)$.

Let $\widetilde T_j$ be one of the components of $\widetilde U - \widetilde \lambda$, projecting onto a component $T_j$ of $S-\lambda$. By construction, for every vectex $\widetilde v_j$ of $\widetilde T_j$, 
$$\tau_{abc}^{\mathcal F}(\gamma\widetilde T_j, \gamma\widetilde v_j) = \tau_{abc}(\gamma\widetilde T_j, \gamma\widetilde v_j) =  \tau_{abc}( T_j,  v_j) = \tau_{abc}(\widetilde T_j,\widetilde v_j) = \tau_{abc}^{\mathcal F}(\widetilde T_j,\widetilde v_j)
$$
where $v_j$ is the vertex of $T_j$ corresponding to $\widetilde v_j$. By the uniqueness part of Lemma~\ref{lem:FlagMapTriangles}, there consequently exists an element $\rho(\gamma) \in \PGL$ sending the positive flag triple associated with $(\gamma\widetilde T_j, \gamma\widetilde v_j) $ by the flag decoration $\mathcal F$. 

Using the Rotation Condition, this element $\rho(\gamma)$ does not depend on the choice of the vertex $\widetilde v_j$. Also, reconstructing $\widetilde U$ one triangle component at a time as in the proof of Lemma~\ref{lem:FlagMapOutsideClosedLeaves}, and applying each time the uniqueness property of Lemma~\ref{lem:FlagMapAdjacentTriangles}, we see that $\rho(\gamma)$  is also independent of the triangle $\widetilde T_j$. 

Since every point of  $\partial_\infty (\widetilde \lambda \cap \widetilde U)$ is a vertex of some triangle component of $\widetilde S - \widetilde\lambda$, it follows that $\mathcal F(\gamma x) = \rho(\gamma) \mathcal F(x)$ for every $x\in  \partial_\infty (\widetilde \lambda \cap \widetilde U)$. 

This defines a map $\rho\colon \pi_1(U) \to \PGL)$, which is easily seen to be a group homomorphism. The above property shows that the flag decoration  $\mathcal F \colon \partial_\infty (\widetilde \lambda \cap \widetilde U) \to \Flag$ is $\rho$--equivariant. 

The uniqueness of $\rho$ is an immediate consequence of the fact that the action of $\PGL$ on the generic flag triples is free. 
\end{proof}

So far, the arguments were essentially those of Fock and Goncharov in \cite{FoG1}. The next step involves a few new twists, and uses the Closed Leaf Equalities and Inequalities in a critical way. 

\begin{lem}
\label{lem:FlagMapGluedClosedLeaves}
Let $\widetilde U_1$ and $\widetilde U_2$ be two adjacent components of $\widetilde S - \widetilde\lambda^{\mathrm{closed}}$, whose closures meet along a component $\widetilde  c_i$ of $ \widetilde\lambda^{\mathrm{closed}}$.  Then, if $\widetilde V$ denotes the union of $\widetilde U_1$, $\widetilde U_2$ and  $\widetilde c_i$, there exists a flag decoration $\mathcal F \colon \partial_\infty (\widetilde \lambda \cap \widetilde V) \to \Flag$ such that
\begin{enumerate}
\item $
\tau_{abc}^{\mathcal F} (\widetilde T_j, \widetilde v_j) = \tau_{abc}(\widetilde T_j, \widetilde v_j)
$
for every component $\widetilde T_j$ of $\widetilde V - \widetilde\lambda$, for every vertex $\widetilde v$ of $\widetilde T_j$, and for every integers $a$, $b$, $c\geq1$ such that $a+b+c=n$;

\item $\sigma_a^{\mathcal F}(\widetilde  g_i) =\sigma_a(\widetilde  g_i) $ for every leaf $\widetilde  g_i$ of $\widetilde\lambda \cap \widetilde U_1$ or $\widetilde\lambda \cap \widetilde U_2$, and for every integer $a$ with $1\leq a \leq n-1$; 

\item if $ c_i$ is the closed leaf of $\lambda$ that is the image of $\widetilde  c_i$, if $k_i$ is the transverse arc cutting $ c_i$ in one point that is part of the topological data, and if the arc $\widetilde k_i$ lifts $k_i$ to $\widetilde S$ and meets $\widetilde  c_i$ in one point, then $\sigma_a^{\mathcal F}(\widetilde k_i) =\sigma_a(\widetilde k_i)  $ for every  integer $a$ with $1\leq a \leq n-1$. 
\end{enumerate}

In addition, the flag decoration $\mathcal F \colon \partial_\infty (\widetilde \lambda \cap \widetilde V) \to \Flag$  is unique up to post-composition with the action of an element of $\PGL$ on $\Flag$. 
\end{lem}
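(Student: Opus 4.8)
The plan is to assemble the desired flag decoration on $\widetilde V$ from the ones produced by Lemma~\ref{lem:FlagMapOutsideClosedLeaves} on $\widetilde U_1$ and $\widetilde U_2$, after reconstructing the monodromy around $\widetilde c_i$ and using it to glue the two halves along the closed leaf. First I would apply Lemma~\ref{lem:FlagMapOutsideClosedLeaves} to each of $\widetilde U_1$ and $\widetilde U_2$, obtaining flag decorations $\mathcal F_1\colon \partial_\infty(\widetilde\lambda\cap\widetilde U_1)\to\Flag$ and $\mathcal F_2\colon \partial_\infty(\widetilde\lambda\cap\widetilde U_2)\to\Flag$ realizing the prescribed triangle and shear invariants; by Lemma~\ref{lem:MonodromyFlagMapAwayFromClosedLeaves} these are equivariant for homomorphisms $\rho_1\colon\pi_1(U_1)\to\PGL$ and $\rho_2\colon\pi_1(U_2)\to\PGL$. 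Since the element $[c_i]\in\pi_1(S)$ carried by $\widetilde c_i$ stabilizes $\widetilde c_i$ and hence each of the two adjacent regions, it lies in $\pi_1(U_1)\cap\pi_1(U_2)$, and we may form $\phi_1=\rho_1([c_i])$ and $\phi_2=\rho_2([c_i])$ in $\PGL$.

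The crucial step is to analyze $\phi_1$ and $\phi_2$. Let $x$ and $y$ be the endpoints of $\widetilde c_i$. On the $\widetilde U_1$-side the leaves of $\widetilde\lambda$ adjacent to $\widetilde c_i$ all spiral toward one of these, say toward $x$ (the other case being symmetric); then $x\in\partial_\infty(\widetilde\lambda\cap\widetilde U_1)$ and, by equivariance, $\phi_1$ fixes the flag $E_1:=\mathcal F_1(x)$. Running the computation in the proof of Proposition~\ref{prop:LengthFunctionOtherInvariants} verbatim — it uses only the flag decoration and its equivariance, never the fact that $\mathcal F_1$ comes from a Hitchin representation — on the spiraling leaves $\widetilde g_{i_l}$ and triangles $\widetilde T_{j_l}$ of $\widetilde U_1$ accumulating on $\widetilde c_i$ shows that if a lift of $\phi_1$ to $\GL$ acts on $\Lambda^a(E_1^{(a)})$ by multiplication by $\mu_a\in\R^*$, then $\log|\mu_a/\mu_{a+1}|$ is, up to the orientation bookkeeping of the four cases of Proposition~\ref{prop:LengthFunctionOtherInvariants}, equal to $L_a^\Right(c_i)$ or $L_a^\Left(c_i)$. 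The \textsc{Closed Leaf Inequality} then forces $|\mu_1|>|\mu_2|>\dots>|\mu_n|$, so $\phi_1$ is diagonalizable with a full eigenflag $F_1$ transverse to $E_1$; I would extend $\mathcal F_1$ to $y$ by setting $\mathcal F_1(y):=F_1$ (this is consistent with any value already prescribed at $y$, by applying the same computation to any leaves spiraling toward $y$). The identical analysis applied to $\phi_2$ and $\mathcal F_2$ produces a generic eigenflag pair $(E_2,F_2)$.

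Next I would glue. Since $\PGL$ acts transitively on generic flag pairs, choose $A\in\PGL$ carrying $(\mathcal F_2(x),\mathcal F_2(y))$ to $(\mathcal F_1(x),\mathcal F_1(y))$; replacing $\mathcal F_2$ by $A\circ\mathcal F_2$ (which conjugates $\rho_2$ and leaves its invariants unchanged), we may assume $\mathcal F_1$ and $\mathcal F_2$ agree at $x$ and $y$, so that $\phi_1$ and $A\phi_2A^{-1}$ share the flags $E_1$ and $F_1$ and are both diagonal in the associated eigenbasis. The \textsc{Closed Leaf Equality} $L_a^\Right(c_i)=L_a^\Left(c_i)$ says precisely that their graded eigenvalue ratios coincide, so $\phi_1=A\phi_2A^{-1}$ in $\PGL$; hence $\mathcal F_1\cup(A\circ\mathcal F_2)$ is a well-defined flag decoration on $\partial_\infty(\widetilde\lambda\cap\widetilde V)=\partial_\infty(\widetilde\lambda\cap\widetilde U_1)\cup\partial_\infty(\widetilde\lambda\cap\widetilde U_2)\cup\{x,y\}$ satisfying conditions (1) and (2). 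There remains the freedom of post-composing $A$ with any $B\in\PGL$ fixing both $E_1$ and $F_1$, i.e.\ with an element of the maximal torus diagonal in the eigenbasis; exactly as in the proof of Lemma~\ref{lem:FlagMapAdjacentTriangles}, expressing the double ratio $D_a(\mathcal F(x),\mathcal F(y),\mathcal F(z),\mathcal F(z'))$ associated with the arc $\widetilde k_i$ in coordinates adapted to $(E_1,F_1)$ shows that $B$ rescales it by $\beta_a/\beta_{a+1}$, so a unique $B$ arranges $\sigma_a^{\mathcal F}(\widetilde k_i)=\sigma_a(\widetilde k_i)$ for all $a$, giving condition (3). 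Uniqueness up to $\PGL$ then follows from the uniqueness clauses of Lemmas~\ref{lem:FlagMapOutsideClosedLeaves} and~\ref{lem:FlagMapAdjacentTriangles} together with the uniqueness of $B$.

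The step I expect to be the main obstacle is the second paragraph: carefully transposing the long computation from the proof of Proposition~\ref{prop:LengthFunctionOtherInvariants}, together with the exponential refinement noted in Remark~\ref{rem:ComputeEigenvalues}, to a flag decoration not yet known to come from a representation, and tracking which endpoint of $\widetilde c_i$ carries the stable and which the unstable flag of the monodromy — hence which of the four orientation cases applies on each side — so that the \textsc{Closed Leaf Inequality} and \textsc{Closed Leaf Equality} are invoked with the correct indices. A secondary point is checking the genericity needed for the double ratios at $\widetilde k_i$ to be defined, which should follow from the positivity of the triangle data as in Proposition~\ref{prop:FlagMap}.
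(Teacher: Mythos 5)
Your proposal follows essentially the same route as the paper: build $\mathcal F_1$, $\mathcal F_2$ by Lemmas~\ref{lem:FlagMapOutsideClosedLeaves} and \ref{lem:MonodromyFlagMapAwayFromClosedLeaves}, transpose the computation of Proposition~\ref{prop:LengthFunctionOtherInvariants} (via Remark~\ref{rem:ComputeEigenvalues}) to identify the graded eigenvalue ratios of the monodromies along $\widetilde c_i$ with $\exp L_a^{\Right/\Left}(c_i)$, use the Closed Leaf Inequality to get distinct eigenvalues and the eigenflag pair, the Closed Leaf Equality to match the two monodromies up to conjugation, glue by a conjugating element of $\PGL$, and spend the residual diagonal freedom $B$ on the arc invariant, exactly as in the paper (your choice of $A$ via the flag pair plus equal graded ratios is the same matrix as the paper's eigenvalue-matched conjugator). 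One point you gloss over, which the paper flags explicitly: Condition~(3) is quantified over \emph{every} lift $\widetilde k_i$ of $k_i$ meeting $\widetilde c_i$, and your adjustment of $B$ only handles one chosen lift; the missing step is to observe that any other such lift is $[c_i]^k\widetilde k_i$ and that the glued decoration is equivariant under $\rho_1\bigl([c_i]\bigr)=A\,\rho_2\bigl([c_i]\bigr)A^{-1}$ (which your identity $\phi_1=A\phi_2A^{-1}$ provides), so the double ratios at all lifts agree. Also, a small indexing slip: the scalar by which a lift of $\phi_1$ acts on $\Lambda^a\bigl(E_1^{(a)}\bigr)$ is the product of the first $a$ graded eigenvalues, so the quantity whose logarithm equals $L_a^{\Right/\Left}(c_i)$ is the ratio of consecutive graded eigenvalues on $E_1^{(a)}/E_1^{(a-1)}$, not of these exterior-power scalars; with that reading your diagonalizability argument is the paper's.
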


Note that Condition~(3) has to be satisfied for every arc $\widetilde k_i$ lifting $k_i$ as indicated. This condition is much stronger than one could think at first glance. As we will see, it requires that  the Closed Leaf Equalities hold for the functions $\tau_{abc}$ and $\sigma_a$.

\begin{proof}
Let $\mathcal F_1 \colon \partial_\infty (\widetilde \lambda \cap \widetilde U_1) \to \Flag$  and $\mathcal F_2 \colon \partial_\infty (\widetilde \lambda \cap \widetilde U_2) \to \Flag$  be the flag decorations provided by Lemma~\ref{lem:FlagMapOutsideClosedLeaves}, respectively equivariant with respects to homomorphisms $\rho_1 \colon \pi_1(U_1) \to \PGL$ and $\rho_2 \colon \pi_1(U_2) \to \PGL$ as in Lemma~\ref{lem:MonodromyFlagMapAwayFromClosedLeaves}. 

The leaf $\widetilde c_i$ has an infinite cyclic stabilizer in $\pi_1(S)$, generated by the element $ [c_i]\in \pi_1(U_1) \cap \pi_1(U_2)$ defined by the choice of an appropriate path connecting the oriented closed curve $ c_i$ to the base point used in the definition of $\pi_1(S)$.

The computations of \S \ref{sect:RelationsInvariants} determine the eigenvalues of $\rho_1\bigl([ c_i]\bigr)$. More precisely, the leaves of $\widetilde\lambda \cap \widetilde U_1$ that are asymptotic to $\widetilde c_i$ all have one endpoint $u\in\partial_\infty (\widetilde \lambda \cap \widetilde U_1)$  in common, corresponding to the positive or negative end point of $\widetilde c_i$ according to the direction of the spiraling. By construction, $\rho_1( c_i)\in \PSL$ respects the flag $H_1= \mathcal F_1(u)\in \Flag$ associated to $u$ by the flag decoration $\mathcal F_1$. Lift $\rho_1\bigl([ c_i]\bigr)\in \PGL$  to $\rho_1\bigl([ c_i]\bigr)'\in \GL$ . We then consider the eigenvalue $m_a^{\rho_1}( c_i)$ of $\rho_1\bigl([ c_i]\bigr)'$ such that $\rho_1\bigl([ c_i]\bigr)'$ acts by multiplication by $m_a^{\rho_1}( c_i)$ on $H_1^{(a)}/H_1^{(a+1)}\cong \R$ if $u$ is the positive end point  of $\widetilde c_i$, and on $H_1^{(n-a+1)}/H_1^{(n-a)}\cong \R$ if $u$ is the negative end point of $\widetilde c_i$. 

The formulas of  \S \ref{sect:RelationsInvariants} then compute each ratio $\frac{m_a^{\rho_1}( c_i)}{m_{a+1}^{\rho_1}( c_i)}$ in terms of 
the triangle invariants of the components of $\widetilde U_1 - \widetilde \lambda$ and of the shear invariants of the leaves of $\widetilde \lambda \cap \widetilde U_1$. We  cannot quite apply Proposition~\ref{prop:LengthFunctionOtherInvariants} as is, because we do not (yet) know that $\rho_1$ is the restriction of a Hitchin representation. However, as observed in  Remark~\ref{rem:ComputeEigenvalues}, the arguments of the proof of this statement straightforwardly apply to the current case as well. The conclusion is then that, if $L_a^{\Left}(c_i)$ and   $L_a^{\Right}(c_i)$ are defined as for the Closed Leaf Equalities and Inequalities in \S \ref{subsect:PossibleInvariants}, the quotient $\frac{m_a^{\rho_1}( c_i)}{m_{a+1}^{\rho_1}( c_i)}$ is equal to $\exp L_a^\Left(c_i)$ or $\exp L_a^\Right(c_i)$ according to whether $\widetilde U_1$ is to the left or to the right of $\widetilde c_i$ for the orientation of $c_i$.

Since the functions $\tau_{abc}$ and $\sigma_a$ satisfy  the Closed Leaf Inequalities, each of these ratios $\frac{m_a^{\rho_1}( c_i)}{m_{a+1}^{\rho_1}( c_i)} = \exp L_a^{\Left/\Right}(c_i)$ is strictly greater than 1. In particular, the eigenvalues $m_a^{\rho_1}( c_i)$ are all distinct, and $\rho_1( c_i)'$ is diagonalizable. Also,  the eigenspace $L_a$ corresponding to the eigenvalue $m_a^{\rho_1}( c_i)$ is 1--dimensional. Consider the flags $E_1$, $F_1\in \Flag$ defined by the property that $E_1^{(a)} = \sum_{b=1}^a L_a$ and $F_1^{(a)} = \sum_{b=n-a+1}^n L_a$. Note that our original flag $H_1= \mathcal F_1(u)$ is equal to either $E_1$ or $F_1$, according to whether $u$ is the positive or negative end point of $\widetilde  c_i$. 

Switching now to $\widetilde U_2$, the same argument provides two flags $E_2$, $F_2\in \Flag$ invariant under $\rho_2\bigl([ c_i]\bigr)$ and eigenvalues $m_a^{\rho_2}( c_i)>0$ of a lift $\rho_2( c_i)'\in \GL$ of $\rho_2\bigl([ c_i]\bigr)\in \PGL$ such that $\rho_2( c_i)'$ acts by multiplication of $m_a^{\rho_2}( c_i)$ on $E_2^{(a)}/E_2^{(a+1)} \cong F_2^{(n-a+1)}/F_2^{(n-a)} \cong \R$. 

We now use the fact that the functions $\tau_{abc}$ and $\sigma_a$ satisfy the Closed Leaf Equalities associated with the closed leaf $c_i$. This implies that  $\frac{m_a^{\rho_1}( c_i)}{m_{a+1}^{\rho_1}( c_i)} = \frac{m_a^{\rho_2}( c_i)}{m_{a+1}^{\rho_2}( c_i)}$ for every $a$. As a consequence, the lift $\rho_2\bigl([ c_i]\bigr)'\in \GL$ of $\rho_2\bigl([ c_i]\bigr)\in \PGL$ can be chosen so that it has the same eigenvalues as $\rho_1\bigl([ c_i]\bigr)$, and therefore so that it is conjugate to $\rho_1\bigl([ c_i]\bigr)$ by a matrix  $A\in\GL$.

The matrix  $A\in \GL$ sends the eigenspaces of $\rho_2\bigl([ c_i]\bigr)$ to the eigenspaces of $\rho_1\bigl([ c_i]\bigr)$, and the induced map $ \Flag \to \Flag$ therefore sends $E_2$ to $E_1$ and $F_2$ to $F_1$. 

The set  $\partial_\infty (\widetilde \lambda \cap \widetilde V)$ is the union of $ \partial_\infty (\widetilde \lambda \cap \widetilde U_1)$, of $ \partial_\infty (\widetilde \lambda \cap \widetilde U_2)$ and of the two end points of $\widetilde  c_i$. (In fact, $ \partial_\infty (\widetilde \lambda \cap \widetilde U_1)$ already contains one of the end points of $\widetilde  c_i$, and so does $ \partial_\infty (\widetilde \lambda \cap \widetilde U_2)$.) We can therefore define a flag decoration $\mathcal F \colon \partial_\infty (\widetilde \lambda \cap \widetilde V) \to \Flag$ by the property that it coincides with $\mathcal F_1$ on $ \partial_\infty (\widetilde \lambda \cap \widetilde U_1)$, it coincides with $A \circ \mathcal F_2$ on $ \partial_\infty (\widetilde \lambda \cap \widetilde U_2)$, and it sends the positive and negative end points of $\widetilde c_i$ to $E_1=A(E_2)$ and $F_1=A(F_2)\in \Flag$, respectively. 

This flag decoration $\mathcal F \colon \partial_\infty (\widetilde \lambda \cap \widetilde V) \to \Flag$ clearly satisfies Conditions~(1) and (2) of Lemma~\ref{lem:FlagMapGluedClosedLeaves}, since these conditions only involve subsets of $\widetilde U_1$ and $\widetilde U_2$. We now have to worry about  Condition~(3).

Choose a lift  $\widetilde k_i\subset \widetilde S$ of the arc $k_i$ that meets $\widetilde  c_i$ in one point. At this point, we still have a certain amount of flexibility in the construction of the flag decoration $\mathcal F$, since we can replace $A$ by $B\circ A$, where $B$ commutes with $\rho_1\bigl([ c_i]\bigr)$ and  stabilizes the generic flag pair $(E_1, F_1)$. As in the proof of Lemma~\ref{lem:FlagMapAdjacentTriangles}, we can use this flexibility to  guarantee that $\sigma_a^{\mathcal F}(\widetilde k_i) = \sigma_a(\widetilde k_i)$. The proof is essentially identical to the one used in the proof of  Lemma~\ref{lem:FlagMapAdjacentTriangles}, with only minor differences in the notation, so we  will not repeat it here. 

This takes care of one of the lifts $\widetilde k_i$ of the arc $k_i$. If $\widetilde k_i'$ is any other lift of $k_i$ that meets $\widetilde c_i$ in one point, there exists a power $[ c_i]^k$ of $[ c_i]\in \pi_1(S)$ such that $\widetilde k_i'=  [c_i]^k \widetilde k_i$. Note that $ [c_i] \in \pi_1(U_1) \cap \pi_1(U_2)$ respects $\widetilde V$ and $\widetilde \lambda \cap \widetilde V$ and, by construction, the flag decoration $\mathcal F \colon \partial_\infty (\widetilde \lambda \cap \widetilde V) \to \Flag$ is equivariant with respect to $\rho_1\bigl( [ c_i]\bigr) = A\circ  \rho_2\bigl( [c_i] \bigr)\circ  A^{-1}\in \PGL$. Using the fact that the original flag decorations $\mathcal F_1$ and $\mathcal F_2$ are respectively $\rho_1$-- and $\rho_2$--equivariant, it follows that  $\sigma_a^{\mathcal F}(\widetilde k_i') = \sigma_a^{\mathcal F}(\widetilde k_i) = \sigma_a(\widetilde k_i)$. Therefore, Condition~(3) holds. 

As in the proof of Lemma~\ref{lem:FlagMapAdjacentTriangles}, the uniqueness of the flag decoration $\mathcal F$ up to the action of $\PGL$ follows from the uniqueness of $\mathcal F_1$ and $\mathcal F_2$, and from the fact that the adjustment factor $B$ is unique in $\PGL$. 
\end{proof}

We are now ready to construct the full flag decoration $\mathcal F \colon \partial_\infty \widetilde \lambda \to \Flag$ that we need. 
\begin{lem}
\label{lem:FlagMapConstructed}
There exists a flag decoration $\mathcal F \colon \partial_\infty \widetilde \lambda \to \Flag$ such that
\begin{enumerate}
\item $
\tau_{abc}^{\mathcal F} (\widetilde T_j, \widetilde v_j) = \tau_{abc}(\widetilde T_j, \widetilde v_j)
$
for every component $\widetilde T_j$ of $\widetilde S - \widetilde\lambda$, for every vertex $\widetilde v$ of $\widetilde T_j$, and for every integers $a$, $b$, $c\geq1$ such that $a+b+c=n$;

\item $\sigma_a^{\mathcal F}(\widetilde  g_i) =\sigma_a(\widetilde  g_i) $ for every isolated  leaf $\widetilde  g_i$ of $\widetilde\lambda$ and for every integer $a$ with $1\leq a \leq n-1$; 

\item  $\sigma_a^{\mathcal F}(\widetilde k_i) =\sigma_a(\widetilde k_i)  $ for every arc  $\widetilde k_i\subset \widetilde S$ lifting one of the transverse arcs $k_i$ that are part of the topological data, and  for every  integer $a$ with $1\leq a \leq n-1$. 
\end{enumerate}
In addition, $\mathcal F$ is unique up to post-composition by the map $\Flag \to \Flag$ induced by an element of $\PGL$. 
\end{lem}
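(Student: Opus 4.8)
The plan is to assemble $\mathcal{F}$ from the flag decorations already produced on the complementary pieces, gluing across the closed leaves one at a time and organizing the bookkeeping along the dual tree of $\widetilde\lambda^{\mathrm{closed}}$. Introduce the graph $\mathcal{T}$ whose vertices are the components $\widetilde U$ of $\widetilde S - \widetilde\lambda^{\mathrm{closed}}$ and whose edges are the components $\widetilde c_i$ of $\widetilde\lambda^{\mathrm{closed}}$; since $\widetilde S$ is simply connected and $\widetilde\lambda^{\mathrm{closed}}$ is a disjoint union of properly embedded lines, $\mathcal{T}$ is a tree, and it has countably many vertices. Choose a root $\widetilde U_0$ and enumerate the vertices as $\widetilde U_0, \widetilde U_1, \widetilde U_2, \dots$ so that, for each $k\geq 1$, the vertex $\widetilde U_k$ is joined in $\mathcal{T}$ to exactly one $\widetilde U_l$ with $l<k$, along an edge $\widetilde c_{i(k)}$; since $\mathcal T$ is a tree, every edge of $\mathcal T$ arises as $\widetilde c_{i(k)}$ for exactly one $k$. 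Let $\widetilde V_k$ be the union of $\widetilde U_0, \dots, \widetilde U_k$ together with the closed leaves joining them; then $\bigcup_k \widetilde V_k = \widetilde S$, so $\bigcup_k \partial_\infty(\widetilde\lambda \cap \widetilde V_k) = \partial_\infty\widetilde\lambda$.

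Next, construct flag decorations $\mathcal{F}_k \colon \partial_\infty(\widetilde\lambda\cap\widetilde V_k) \to \Flag$ by induction on $k$, with $\mathcal{F}_k$ extending $\mathcal{F}_{k-1}$ and realizing the prescribed triangle and shear invariants on $\widetilde V_k$. The case $k=0$ is Lemma~\ref{lem:FlagMapOutsideClosedLeaves} applied to $\widetilde U_0$. For the inductive step, apply Lemma~\ref{lem:FlagMapGluedClosedLeaves} to the adjacent pair $(\widetilde U_l, \widetilde U_k)$ glued along $\widetilde c_{i(k)}$: this yields a flag decoration on $\partial_\infty\bigl(\widetilde\lambda\cap(\widetilde U_l\cup\widetilde c_{i(k)}\cup\widetilde U_k)\bigr)$ with the correct triangle invariants on both pieces, the correct shear invariants on all their isolated leaves, and moreover $\sigma_a^{\mathcal F}(\widetilde k_{i(k)})=\sigma_a(\widetilde k_{i(k)})$ for \emph{every} lift $\widetilde k_{i(k)}$ of $k_{i(k)}$ meeting $\widetilde c_{i(k)}$. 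Its restriction to $\widetilde U_l$ has the same invariants as $\mathcal{F}_{k-1}|_{\widetilde U_l}$, so by the uniqueness clause of Lemma~\ref{lem:FlagMapOutsideClosedLeaves} the two differ by an element of $\PGL$, and this element is unique because the action of $\PGL$ on generic flag triples is free and $\partial_\infty(\widetilde\lambda\cap\widetilde U_l)$ carries at least one positive (hence generic) triple. Post-composing the decoration from Lemma~\ref{lem:FlagMapGluedClosedLeaves} with this $\PGL$-element changes no triple ratio and no double ratio, and makes it agree with $\mathcal{F}_{k-1}$ on $\partial_\infty(\widetilde\lambda\cap\widetilde U_l)$, in particular at the endpoints of $\widetilde c_{i(k)}$, which lie in that set. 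Define $\mathcal{F}_k$ to equal $\mathcal{F}_{k-1}$ on $\partial_\infty(\widetilde\lambda\cap\widetilde V_{k-1})$ and the adjusted decoration on $\partial_\infty(\widetilde\lambda\cap\widetilde U_k)$; this completes the step. The $\mathcal{F}_k$ are mutually compatible, so $\mathcal{F}:=\bigcup_k\mathcal{F}_k \colon \partial_\infty\widetilde\lambda\to\Flag$ is a well-defined flag decoration.

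It remains to verify Conditions~(1)--(3) and uniqueness. Every triangle component $\widetilde T_j$ of $\widetilde S-\widetilde\lambda$, every isolated leaf $\widetilde g_i$, and every arc $\widetilde k_i$ lies inside $\widetilde V_k$ for all large $k$; Conditions~(1) and (2) are local to a triangle or a pair of adjacent triangles, and Condition~(3) for $\widetilde k_i$ is local to the single closed leaf it crosses, which is an edge $\widetilde c_{i(k)}$ of $\mathcal{T}$ processed at the corresponding step. Hence all three conditions are inherited from Lemma~\ref{lem:FlagMapOutsideClosedLeaves} and Lemma~\ref{lem:FlagMapGluedClosedLeaves}; crucially, the $\PGL$-adjustments performed at later steps act only on not-yet-processed pieces and preserve all ratios, so nothing already arranged is destroyed. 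For uniqueness up to $\PGL$, given two flag decorations with the stated invariants, a single global $\PGL$-element makes them agree on $\partial_\infty(\widetilde\lambda\cap\widetilde U_0)$ by Lemma~\ref{lem:FlagMapOutsideClosedLeaves}; then by induction on $k$, the uniqueness clauses of Lemmas~\ref{lem:FlagMapOutsideClosedLeaves} and \ref{lem:FlagMapGluedClosedLeaves} force agreement on every $\partial_\infty(\widetilde\lambda\cap\widetilde V_k)$, since crossing each new edge of the tree leaves no further freedom.

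The genuinely substantive input — the Closed Leaf Equalities (used to match the monodromy eigenvalues across a closed leaf) and the Closed Leaf Inequalities (used to make that monodromy loxodromic and diagonalizable) — has already been absorbed into Lemma~\ref{lem:FlagMapGluedClosedLeaves}, so the main work here is organizational. The one point demanding care is that Condition~(3) must hold for \emph{all} lifts of \emph{each} transverse arc $k_i$; this causes no trouble because Lemma~\ref{lem:FlagMapGluedClosedLeaves}, through its own equivariance argument, already supplies Condition~(3) for every lift meeting the closed leaf being crossed, and the various components of the preimage of a given $c_i$ are distinct edges of $\mathcal{T}$, each crossed exactly once in the enumeration.
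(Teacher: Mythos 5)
Your argument is correct and is essentially the paper's own proof (which the paper gives only as a two-sentence sketch): enumerate the components of $\widetilde S - \widetilde\lambda^{\mathrm{closed}}$ so that each new one is adjacent to exactly one earlier one, apply Lemma~\ref{lem:FlagMapGluedClosedLeaves} across the newly crossed closed leaf, normalize by the unique element of $\PGL$ making the decorations agree on the previously treated side, and get uniqueness by the same induction. One cosmetic correction: only one endpoint of $\widetilde c_{i(k)}$ is guaranteed to lie in $\partial_\infty(\widetilde\lambda\cap\widetilde U_l)$ (when both sides spiral toward the same endpoint, the other endpoint lies in neither $\partial_\infty(\widetilde\lambda\cap\widetilde U_l)$ nor $\partial_\infty(\widetilde\lambda\cap\widetilde U_k)$), so you should define $\mathcal F_k$ to be the adjusted glued decoration on all of $\partial_\infty\bigl(\widetilde\lambda\cap(\widetilde U_l\cup\widetilde c_{i(k)}\cup\widetilde U_k)\bigr)$, including both endpoints of $\widetilde c_{i(k)}$, rather than only on $\partial_\infty(\widetilde\lambda\cap\widetilde U_k)$.
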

\begin{proof}
The argument is very similar to the one used in the proof of Lemma~\ref{lem:FlagMapOutsideClosedLeaves}. List the components of $\widetilde S - \widetilde\lambda$ as $\widetilde U_1$, $\widetilde U_2$, \dots, $\widetilde U_k$, \dots{} in such a way that each $\widetilde U_k$ is adjacent to exactly one $\widetilde U_l$ with $l<k$. One then construct $\mathcal F$ on $\widetilde U_1 \cup \widetilde U_2 \cup \dots \cup \widetilde U_k$ by induction on $k$, using Lemma~\ref{lem:FlagMapGluedClosedLeaves} at each stage. 
\end{proof}

\begin{lem}
\label{lem:FlagMapMonodromy}
Under the hypotheses and conclusions of Lemma~{\upshape\ref{lem:FlagMapConstructed}}, there exists a unique homomorphism $\rho \colon \pi_1(S) \to \PGL$ for which the flag decoration $\mathcal F \colon \partial_\infty \widetilde \lambda \to \Flag$ is $\rho$--equivariant. 
\end{lem}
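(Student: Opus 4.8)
The plan is to construct the homomorphism $\rho \colon \pi_1(S) \to \PGL$ exactly as $\rho$ was built on each piece $\widetilde U$ in Lemma~\ref{lem:MonodromyFlagMapAwayFromClosedLeaves}, and then check compatibility across closed leaves. First I would fix an element $\gamma \in \pi_1(S)$. Since $\gamma$ maps $\widetilde\lambda$ to itself and permutes the components of $\widetilde S - \widetilde\lambda^{\mathrm{closed}}$, it sends $\partial_\infty(\widetilde\lambda \cap \widetilde U)$ to $\partial_\infty(\widetilde\lambda \cap \gamma\widetilde U)$ for each such component $\widetilde U$, and it takes triangles of $\widetilde S - \widetilde\lambda$ to triangles, respecting vertices. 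Because the functions $\tau_{abc}$ and $\sigma_a$ on $S$ lift to $\pi_1(S)$--invariant functions on $\widetilde S$, we get, for every triangle $\widetilde T_j$ of $\widetilde S - \widetilde\lambda$ and every vertex $\widetilde v_j$,
$$
\tau_{abc}^{\mathcal F}(\gamma\widetilde T_j, \gamma\widetilde v_j) = \tau_{abc}(\gamma\widetilde T_j, \gamma\widetilde v_j) = \tau_{abc}(\widetilde T_j, \widetilde v_j) = \tau_{abc}^{\mathcal F}(\widetilde T_j, \widetilde v_j),
$$
so the flag decoration $\mathcal F \circ \gamma$ has the same triangle invariants as $\mathcal F$; similarly it has the same $\sigma_a$ along isolated leaves and along the lifts of the transverse arcs $k_i$, since $\gamma$ permutes those lifts and the functions are $\pi_1(S)$--invariant. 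By the uniqueness clause of Lemma~\ref{lem:FlagMapConstructed}, there is therefore a unique element $\rho(\gamma) \in \PGL$ with $\mathcal F \circ \gamma = \rho(\gamma) \circ \mathcal F$ on $\partial_\infty\widetilde\lambda$.

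Next I would verify that $\gamma \mapsto \rho(\gamma)$ is a homomorphism: from $\mathcal F(\gamma_1\gamma_2 x) = \rho(\gamma_1\gamma_2)\mathcal F(x)$ and $\mathcal F(\gamma_1\gamma_2 x) = \rho(\gamma_1)\mathcal F(\gamma_2 x) = \rho(\gamma_1)\rho(\gamma_2)\mathcal F(x)$ for all $x \in \partial_\infty\widetilde\lambda$, together with the freeness of the $\PGL$--action on generic flag triples (so that an element of $\PGL$ is determined by its effect on the image of $\mathcal F$, which contains many generic triples), we conclude $\rho(\gamma_1\gamma_2) = \rho(\gamma_1)\rho(\gamma_2)$. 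Equivariance $\mathcal F(\gamma x) = \rho(\gamma)\mathcal F(x)$ holds by construction, and uniqueness of $\rho$ follows from the same freeness statement: any two equivariant $\rho$ must agree on the subgroup because they agree after composing with $\mathcal F$.

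The one point that needs a little care — and which I expect to be the main (mild) obstacle — is making sure that the single flag decoration $\mathcal F \colon \partial_\infty\widetilde\lambda \to \Flag$ produced in Lemma~\ref{lem:FlagMapConstructed} is genuinely $\pi_1(S)$--equivariant and not merely equivariant piece-by-piece with possibly incompatible $\PGL$--choices. This is where the uniqueness statements accumulated through Lemmas~\ref{lem:FlagMapTriangles}, \ref{lem:FlagMapAdjacentTriangles}, \ref{lem:FlagMapOutsideClosedLeaves}, \ref{lem:MonodromyFlagMapAwayFromClosedLeaves} and \ref{lem:FlagMapGluedClosedLeaves} do all the work: the invariants $\tau_{abc}$, $\sigma_a(\widetilde g_i)$, $\sigma_a(\widetilde k_i)$ determine $\mathcal F$ up to a \emph{global} element of $\PGL$, so $\mathcal F \circ \gamma$ and $\rho(\gamma) \circ \mathcal F$ must agree everywhere on $\partial_\infty\widetilde\lambda$ once they agree on one generic flag triple. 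Concretely I would base the argument on the fact, already used repeatedly, that $\partial_\infty\widetilde\lambda$ is the set of vertices of triangle components of $\widetilde S - \widetilde\lambda$, so any $x \in \partial_\infty\widetilde\lambda$ lies on some $\widetilde T_j$ and the identity $\mathcal F(\gamma x) = \rho(\gamma)\mathcal F(x)$ is forced by the triangle-wise uniqueness. I would conclude by recording that, by construction through Lemma~\ref{lem:FlagMapGluedClosedLeaves}, for each closed leaf $\widetilde c_i$ the element $\rho\bigl([c_i]\bigr)$ is the diagonalizable element with eigenvalue ratios $\exp L_a^{\Left}(c_i) = \exp L_a^{\Right}(c_i) > 1$, which is exactly the data that will be needed in the subsequent steps to identify $\rho$ as (a lift of) a Hitchin representation; this remark costs nothing here but is worth stating now.
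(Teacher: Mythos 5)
Your proposal is correct and follows essentially the same route as the paper, which proves this lemma by repeating the argument of Lemma~\ref{lem:MonodromyFlagMapAwayFromClosedLeaves} and invoking the uniqueness statements of Lemmas~\ref{lem:FlagMapGluedClosedLeaves} and \ref{lem:FlagMapConstructed}: since the lifted functions $\tau_{abc}$, $\sigma_a$ are $\pi_1(S)$--invariant and deck transformations preserve all the relevant structure (including the lifts of the arcs $k_i$, which is exactly why Condition~(3) of Lemma~\ref{lem:FlagMapConstructed} is required for every lift), $\mathcal F\circ\gamma$ has the same invariants as $\mathcal F$, the global uniqueness yields $\rho(\gamma)\in\PGL$, and freeness of the $\PGL$--action on generic flag triples gives the homomorphism property and uniqueness of $\rho$.
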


\begin{proof}
The proof is essentially  identical to that of Lemma~\ref{lem:MonodromyFlagMapAwayFromClosedLeaves}, using the uniqueness statements of Lemmas~\ref{lem:FlagMapGluedClosedLeaves} and \ref{lem:FlagMapConstructed}. We omit the details.  
\end{proof}

Before passing to the next step, we note the following elementary fact. 

\begin{lem}
\label{lem:IncompleteFlagTripleUnique}
Given generic flag triples $(E,F,G)$ and $(E', F', G')$, there exists a unique element of $\PGL$ that sends the flag $E$ to the flag $E'$, the flag $F$ to the flag $F'$, and the line $G^{(1)}$ to the line $G'{}^{(1)}$.   \qed
\end{lem}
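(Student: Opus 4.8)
The plan is to reduce this to the description of the stabilizer in $\PGL$ of a generic flag pair, handling existence and uniqueness in turn. Fix a basis $e_1,\dots,e_n$ of $\R^n$ adapted to the generic pair $(E,F)$, so that $e_a$ spans the line $L_a=E^{(a)}\cap F^{(n-a+1)}$, $E^{(a)}=\mathrm{span}(e_1,\dots,e_a)$ and $F^{(b)}=\mathrm{span}(e_{n-b+1},\dots,e_n)$; fix likewise a basis $e_1',\dots,e_n'$ adapted to $(E',F')$. An element of $\GL$ fixing both the flag $E$ and the flag $F$ is simultaneously upper and lower triangular in the basis $(e_a)$, hence diagonal, so the stabilizer of the pair $(E,F)$ in $\PGL$ is exactly the image of the group of diagonal matrices $\mathrm{diag}(\beta_1,\dots,\beta_n)$; the same holds for $(E',F')$.

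For existence I would first invoke the elementary linear algebra recalled in \S\ref{subsect:TripleRatios} to produce some $A_0\in\PGL$ with $A_0(E)=E'$ and $A_0(F)=F'$; any $\GL$--lift of $A_0$ then carries each $L_a$ onto $L_a'$. The key input from genericity of the \emph{triple} $(E,F,G)$ is that a generator $v=\sum_a\gamma_a e_a$ of $G^{(1)}$ has every coordinate $\gamma_a$ nonzero: if $\gamma_a=0$ then $G^{(1)}$ would lie in $E^{(a-1)}+F^{(n-a)}=\mathrm{span}(e_b:b\ne a)$, contradicting the transversality of the subspaces $E^{(a-1)}$, $F^{(n-a)}$, $G^{(1)}$, whose dimensions add up to $n$. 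By the same argument a generator of $G'{}^{(1)}$ has all coordinates nonzero in the basis $(e_a')$, and so does $A_0(G^{(1)})$; hence some diagonal matrix $B$ in the basis $(e_a')$ sends $A_0(G^{(1)})$ to $G'{}^{(1)}$. Since $B$ stabilizes $(E',F')$, the composite $A=B\circ A_0\in\PGL$ sends $E\mapsto E'$, $F\mapsto F'$ and $G^{(1)}\mapsto G'{}^{(1)}$.

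For uniqueness, suppose $A_1$ and $A_2$ both have this property and set $C=A_2\circ A_1^{-1}$, which fixes $E'$, $F'$ and the line $G'{}^{(1)}$. Being in the stabilizer of $(E',F')$, $C$ is represented by a diagonal matrix $\mathrm{diag}(\beta_1,\dots,\beta_n)$ in the basis $(e_a')$; since it fixes the line $G'{}^{(1)}$, all of whose coordinates in $(e_a')$ are nonzero by the previous paragraph, the scalars $\beta_a$ must all coincide, so $C$ is the identity in $\PGL$. Put differently, $\PGL$ acts simply transitively on the configurations made of two generic flags together with a line in general position with respect to them.

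The one step I expect to need the most care is precisely the use of genericity of the triple rather than merely of the pair, namely the assertion that a generator of $G^{(1)}$ has no vanishing coordinate in a basis adapted to $(E,F)$. This is exactly what makes the relevant torus act transitively, and with trivial stabilizer, on the admissible lines, and both halves of the argument rest on it; everything else is routine linear algebra.
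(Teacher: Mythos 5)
Your proof is correct, and it is exactly the argument the paper leaves implicit by stating the lemma as an elementary fact: the paper itself performs the same computation (basis $e_1,\dots,e_n$ adapted to the generic pair $(E,F)$, stabilizer acting diagonally, and the observation that a generator of $G^{(1)}$ has all coordinates nonzero by genericity of the triple) inside the proof of Lemma~\ref{lem:FlagMapAdjacentTriangles}, where the uniqueness of the adjusting diagonal element $B$ is invoked. You have also correctly isolated the one non-routine point, namely that transversality of $E^{(a-1)}$, $F^{(n-a)}$, $G^{(1)}$ forces every coordinate $\gamma_a$ to be nonzero, which is what makes the diagonal torus act simply transitively on the admissible lines.
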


Lemma~\ref{lem:IncompleteFlagTripleUnique} will enable us to freeze the $\PGL$--ambiguity  in the construction of the flag decoration $\mathcal F \colon \partial_\infty \widetilde  \lambda \to \Flag$ of Lemma~\ref{lem:FlagMapConstructed}, and of the homomorphism $\rho\colon \pi_1(S) \to \PGL$ of Lemma~\ref{lem:FlagMapMonodromy}. 

We begin with an arbitrary Hitchin representation $\rho_0 \colon \pi_1(S) \to \PSL$ and its associated flag curve $\mathcal F_{\rho_0} \colon \partial_\infty \widetilde S \to \Flag$. We also select an arbitrary component $\widetilde T_{i_0}$ of $\widetilde S - \widetilde\lambda$, and let $x_0$, $y_0$, $z_0\in \partial_\infty \widetilde S$ be the vertices of this triangle. Finally, we consider the positive flag triple $(E_0, F_0, G_0)$ where $E_0 = \mathcal F_{\rho_0}(x_0)$,  $F_0 = \mathcal F_{\rho_0}(y_0)$ and  $G_0 = \mathcal F_{\rho_0}(z_0)$. 

By  Lemma~\ref{lem:IncompleteFlagTripleUnique}, the flag decoration $\mathcal F \colon \partial_\infty \widetilde \lambda \to \Flag$ can be modified by an element of $\PGL$  so that $\mathcal F(x_0)=E_0$, $\mathcal F(y_0)=F_0$ and $\mathcal F(z_0)^{(1)}=G_0^{(1)}$. With such a normalization, the flag decoration $\mathcal F$ is now uniquely determined. So is the homomorphism $\rho \colon \pi_1(S) \to \PGL$.

\begin{lem}
\label{lem:FlagMapMonodromyIsHitchin}
With the above normalization, the homomorphism  $\rho$ of Lemma~{\upshape\ref{lem:FlagMapMonodromy}} is valued in $\PSL$ and is a Hitchin representation. 
\end{lem}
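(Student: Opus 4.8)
The goal is to show that the homomorphism $\rho\colon\pi_1(S)\to\PGL$ reconstructed from the combinatorial data $(\tau_{abc},\sigma_a)$ is in fact a Hitchin representation. The strategy is a connectedness argument in the space of combinatorial data: we know the map $\Phi\colon\Hit(S)\to\mathcal P$ is continuous, and the reconstruction procedure of Lemmas~\ref{lem:FlagMapConstructed}--\ref{lem:FlagMapMonodromy} (with the normalization fixed by Lemma~\ref{lem:IncompleteFlagTripleUnique}) produces a point $\Psi(\tau,\sigma)\in\mathcal R_{\PGL}(S)$ depending continuously on $(\tau,\sigma)\in\mathcal P$. We will argue that $\Psi$ is a right inverse to $\Phi$ on the nose — i.e. that for a point of $\mathcal P$ that happens to lie in the image of $\Phi$, the reconstruction returns the original Hitchin representation — and then use the connectedness of $\mathcal P$ to propagate the conclusion everywhere.

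First I would verify that the reconstructed $\rho$ has the prescribed invariants: by construction $\tau_{abc}^\rho = \tau_{abc}$ and $\sigma_a^\rho(g_i)=\sigma_a(g_i)$, $\sigma_a^\rho(k_i)=\sigma_a(k_i)$, since the flag decoration $\mathcal F$ was built precisely to realize these values and is $\rho$--equivariant by Lemma~\ref{lem:FlagMapMonodromy}; moreover the flag decoration determines $\rho$ uniquely given the normalization, via freeness of the $\PGL$--action on generic flag triples. Hence $\Phi\circ\Psi=\mathrm{id}_{\mathcal P}$ as maps of sets, \emph{provided} the output of $\Psi$ is genuinely a Hitchin representation (so that $\Phi$ is even defined on it). Conversely, if $\rho_0\in\Hit(S)$ and $(\tau,\sigma)=\Phi(\rho_0)$, then the restriction of the flag curve $\mathcal F_{\rho_0}$ to $\partial_\infty\widetilde\lambda$ is a flag decoration realizing $(\tau,\sigma)$ with $\rho_0$--equivariant monodromy; by the uniqueness in Lemma~\ref{lem:FlagMapConstructed} (after the normalization, which was chosen using $\rho_0$ itself) it must coincide with the reconstructed $\mathcal F$, whence $\Psi(\Phi(\rho_0))=\rho_0$. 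So $\Psi$ restricted to $\Phi(\Hit(S))\subseteq\mathcal P$ lands in $\Hit(S)$.

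Next I would establish that $\Psi(\mathcal P)\subseteq\Hit(S)$. Consider the subset $\mathcal P_0 = \{(\tau,\sigma)\in\mathcal P : \Psi(\tau,\sigma)\in\Hit(S)\}$. It is nonempty since it contains $\Phi(\Hit(S))$. It is open: $\Hit(S)$ is open in $\mathcal R_{\PGL}(S)$ (it is a connected component of a character variety), so its preimage under the continuous map $\Psi$ is open in $\mathcal P$. The key remaining point is that $\mathcal P_0$ is closed in $\mathcal P$. Here I would use that the reconstructed $\rho$ always has the Anosov-type behaviour forced by the Closed Leaf Inequalities: by Remark~\ref{rem:ComputeEigenvalues} and the computation inside the proof of Lemma~\ref{lem:FlagMapGluedClosedLeaves}, the monodromy $\rho([c_i])$ around each closed leaf is diagonalizable with distinct positive eigenvalues whose successive ratios are $\exp L_a^{\Right}(c_i)>1$; combined with the positivity built into every local flag decoration (all triple ratios $\exp\tau_{abc}>0$, all double ratios $\exp\sigma_a>0$), the flag decoration $\mathcal F$ extends, by the standard dynamical/hyperconvexity argument of \cite{FoG1} (equivalently \cite{Lab1, Gui}), to a continuous $\rho$--equivariant positive map $\partial_\infty\widetilde S\to\Flag$. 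A continuous positive equivariant flag curve forces $\rho$ to be Anosov, hence to lie in $\Hit(S)$: this is precisely the characterization of Hitchin representations via the flag curve. Therefore $\Psi(\mathcal P)\subseteq\Hit(S)$ outright, which already makes $\mathcal P_0=\mathcal P$, so the closedness step is subsumed. Finally, the eigenvalues of $\rho(\gamma)$ for $\gamma$ a closed-leaf class are positive by the above, and for general $\gamma$ one invokes Lemma~\ref{lem:HitchinLoxodromicPositiveEigenvalues}; together with $\det=\pm1$ this pins $\rho$ inside the index-two subgroup $\PSL\subseteq\PGL$ when $n$ is even. The main obstacle is this extension-of-the-flag-decoration-to-a-positive-flag-curve step: one must check that the combinatorial positivity and the eigenvalue-ratio positivity at closed leaves are exactly what the Fock--Goncharov/Labourie machinery needs to produce a genuine (continuous, hyperconvex, Anosov) flag curve, rather than merely a decoration on $\partial_\infty\widetilde\lambda$ — in other words, that no pathology hides in the directions of $\partial_\infty\widetilde S$ not accounted for by the lamination.
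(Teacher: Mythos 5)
Your proposal follows the paper's skeleton for part of the way (continuity of the reconstruction, openness, the fixed point $\Psi(\Phi(\rho_0))=\rho_0$ supplied by the normalization), but the step you rely on to close the argument is a genuine gap. You propose to show $\Psi(\mathcal P)\subseteq\Hit(S)$ directly by extending the reconstructed flag decoration on $\partial_\infty\widetilde\lambda$ to a continuous, $\rho$--equivariant, positive flag curve on all of $\partial_\infty\widetilde S$ and then invoking a ``positive flag curve $\Rightarrow$ Hitchin'' characterization. Nothing in the construction gives you this: the decoration is only defined on the countable set $\partial_\infty\widetilde\lambda$, and positivity has only been verified for the special configurations used in the construction (triples from a single triangle, quadruples across a single leaf), not for arbitrary triples and quadruples; the machinery you cite runs in the other direction (Labourie and Guichard start from a Hitchin/Anosov representation and produce the flag curve, and Fock--Goncharov's positivity results are for punctured surfaces). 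You acknowledge this yourself as ``the main obstacle,'' but it is precisely the content that would need to be proved. There are two further problems: invoking Lemma~\ref{lem:HitchinLoxodromicPositiveEigenvalues} to place $\rho$ in $\PSL$ is circular, since that lemma presupposes that $\rho$ is Hitchin; and even granting an equivariant positive flag curve, for $n$ even this would not by itself distinguish $\Hit(S)$ from its image under the $\PGL/\PSL$--action on the character variety, which is a different component carrying the mirror structure --- this is exactly the ambiguity the normalization is there to resolve.

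The paper's proof avoids all of this with a soft argument you already have most of the ingredients for: the normalized $\Psi$ is a continuous map from $\mathcal P$ into the space of homomorphisms $\pi_1(S)\to\PGL$; being valued in $\PSL$ and being a Hitchin homomorphism are open \emph{and closed} conditions on that space (the Hitchin homomorphisms form a union of components); and $\Psi(P_0)=\rho_0$ exactly (not merely up to conjugation) at the base point $P_0=\Phi(\rho_0)$ because the normalization was tailored to $\rho_0$. Connectedness of the convex polytope $\mathcal P$ then propagates both conclusions to every $P\in\mathcal P$, with no Anosov or positivity verification needed at the other points. If you replace your ``closedness via flag-curve extension'' step by this open-and-closed-in-the-homomorphism-space observation, your argument becomes the paper's.
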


Without the above normalization, we would only conclude that $\rho$ is conjugate to a Hitchin representation by an element of $\PGL$. This does not make any difference when $n$ is odd, since $\PGL = \PSL$ in this case. However, when $n$ is even, the quotient $\PGL/\PSL=\Z_2$ acts by conjugation on the character variety $\mathcal R_{\PSL}(S)$, and sends the Hitchin component $\Hit(S)$ to a different component.  

 \begin{proof} We will use a continuity and connectedness argument. 
 
If we examine the proofs of Lemmas~\ref{lem:FlagMapTriangles}--\ref{lem:FlagMapConstructed} that lead to the construction of the flag decoration $\mathcal F \colon \partial_\infty \widetilde \lambda \to \Flag$, we see that $\mathcal F$ depends continuously on the functions $\tau_{abc}$ and $\sigma_a$. More precisely, if $\mathcal F \colon \partial_\infty \widetilde \lambda \to \Flag$ is normalized as above, then for every $u \in  \partial_\infty \widetilde \lambda$ the flag $\mathcal F(u)\in \Flag$ depends continuously on the finitely many parameters $\tau_{abc}(T_j, v_j)$, $\sigma_a(g_i)$, $\sigma(c_i)\in \R$. 

The normalization of  the flag decoration $\mathcal F$ provides a normalization of the homomorphism $\rho \colon \pi_1(S) \to \PGL$ for which $\mathcal F$ is $\rho$--equivariant. Indeed, if $x_0$, $y_0$, $z_0\in \partial_\infty \widetilde S$ are the vertices of the base triangle $\widetilde T_{i_0}$ that we have chosen and if $\gamma \in \pi_1(S)$, $\rho(\gamma)$ is the unique element of $\PGL$ sending $\mathcal F(x_0)$ to $\mathcal F(\gamma x_0)$, $\mathcal F(y_0)$ to $\mathcal F(\gamma y_0)$ and $\mathcal F(z_0)$ to $\mathcal F(\gamma z_0)$ in $\Flag$. In particular, this proves that $\rho$ depends continuously on  the functions $\tau_{abc}$ and $\sigma_a$. 

If $\mathcal P$ is the polytope of Proposition~\ref{prop:Polytope},  consisting of all functions $\tau_{abc}$ and $\sigma_a$ satisfying the Rotation Condition, the Closed Leaf Equalities and the Closed Leaf Inequalities, we consequently have constructed a continuous map
$$
\Psi \colon \mathcal P \to \{ \text{homomorphisms } \rho \colon \pi_1(S) \to \PGL \}.
$$

Now, let us return to the  Hitchin representation $\rho_0 \colon \pi_1(S)\to \PSL$ used to normalize the flag decoration $\mathcal F$ and the homomorphism $\rho$. 
The triangle and shear invariants $\tau_{abc}^{\rho_0}$ and $\sigma_a^{\rho_0}$ of $\rho_0$ define a point $P_0$ of the polytope $\mathcal P$. Because our choice of normalization is specially taylored for $\rho_0$,  the normalized  flag decoration $\mathcal F \colon \partial_\infty \widetilde \lambda \to \Flag$ associated with $P_0\in \mathcal P$ coincides with the restriction of the flag curve $\mathcal F_{\rho_0}$ to $\partial_\infty \widetilde \lambda$. As a consequence, $\Psi(P_0) = \rho_0$. 

Therefore, there is at least one point $P_0\in \mathcal P$ such that the homomorphism $\rho_0 = \Psi(P_0)$ is valued in $\PSL$ (and not just in $\PGL$) and is a Hitchin representation. Because  the convex polytope $\mathcal P$ is connected, we conclude by continuity that for every $P\in \mathcal P$ the homomorphism $\Psi(P)$ is valued in $\PSL$. Also, the Hitchin representations form a whole component of $\{ \text{homomorphisms } \rho \colon \pi_1(S) \to \PSL \}$. The same  continuity and connexity argument then shows that every  $\Psi(P)$  is a Hitchin representation. 
\end{proof}

Composing $\Psi$ with the quotient map under the action of $\PSL$ therefore provides a continuous map
$$
\overline \Psi \colon \mathcal P \to \Hit(S)
$$
 from the convex polytope $P$ to the Hitchin component $\Hit(S)$. 

\begin{lem}
\label{lem:ParametrizationInverse}
The map $\overline \Psi$ is an inverse of $\Phi\colon \Hit(S) \to \mathcal P$. 
\end{lem}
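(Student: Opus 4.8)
The plan is to show that the two composites $\Phi\circ\overline\Psi$ and $\overline\Psi\circ\Phi$ are each the identity, and the key point is that almost everything has already been set up: by construction $\overline\Psi$ produces, from a point $P=(\tau_{abc},\sigma_a)\in\mathcal P$, a Hitchin representation $\rho=\Psi(P)$ whose $\rho$--equivariant flag decoration $\mathcal F\colon\partial_\infty\widetilde\lambda\to\Flag$ realizes exactly the prescribed triangle and shear invariants (Lemma~\ref{lem:FlagMapConstructed}, conditions (1)--(3), together with Lemma~\ref{lem:FlagMapMonodromy} and Lemma~\ref{lem:FlagMapMonodromyIsHitchin}). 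So the first step is to check that the flag decoration used to \emph{define} the invariants $\tau^\rho_{abc}$, $\sigma^\rho_a(g_j)$, $\sigma^\rho_a(c_i)$ in \S\ref{sect:HitchinRepsInvariants} is precisely the restriction of the flag curve $\mathcal F_\rho$, hence agrees with the decoration $\mathcal F$ constructed by $\overline\Psi$ up to the $\PGL$--ambiguity, which does not affect triple ratios or double ratios (they are $\PGL$--invariant). It follows that $\tau^{\Psi(P)}_{abc}=\tau_{abc}$ and $\sigma^{\Psi(P)}_a=\sigma_a$, i.e.\ $\Phi\circ\overline\Psi=\mathrm{id}_{\mathcal P}$; here one uses that $\sigma^{\mathcal F}_a(\widetilde k_i)=\sigma_a(\widetilde k_i)$ is exactly the datum needed for the shear invariant of a closed leaf to come out right, so condition (3) of Lemma~\ref{lem:FlagMapConstructed} is doing real work.

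The second step is $\overline\Psi\circ\Phi=\mathrm{id}_{\Hit(S)}$. Start from a Hitchin representation $\rho\in\Hit(S)$, form $P=\Phi(\rho)\in\mathcal P$, and run the construction of $\overline\Psi$. By Lemma~\ref{lem:FlagMapConstructed} there is a flag decoration $\mathcal F$ with the prescribed invariants $\tau_{abc}=\tau^\rho_{abc}$, $\sigma_a=\sigma^\rho_a$, unique up to $\PGL$; but the restriction $\mathcal F_\rho|_{\partial_\infty\widetilde\lambda}$ is \emph{also} such a flag decoration, since by definition its triangle and shear invariants are the components of $P$. By the uniqueness clause, $\mathcal F$ and $\mathcal F_\rho|_{\partial_\infty\widetilde\lambda}$ differ by an element of $\PGL$; after the normalization step (using Lemma~\ref{lem:IncompleteFlagTripleUnique} and, crucially, choosing the base Hitchin representation $\rho_0$ in Lemma~\ref{lem:FlagMapMonodromyIsHitchin} to be $\rho$ itself), they coincide on the nose. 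Then the homomorphism $\Psi(P)$, which is the unique element of $\PGL$-homomorphisms making $\mathcal F$ equivariant, must agree with $\rho$ — because $\rho$ makes $\mathcal F_\rho|_{\partial_\infty\widetilde\lambda}$ equivariant and the action of $\PGL$ on generic flag triples is free (same argument as in Lemma~\ref{lem:MonodromyFlagMapAwayFromClosedLeaves}). Hence $\overline\Psi(\Phi(\rho))=[\rho]=\rho$ in $\Hit(S)$.

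A minor subtlety to address is that the normalization of $\overline\Psi$ was set up once and for all using a \emph{fixed} auxiliary $\rho_0$, whereas the argument above wants to take $\rho_0=\rho$. This is harmless: changing the normalizing data replaces $\Psi(P)$ by a $\PGL$--conjugate, so $\overline\Psi$ itself (the composite with the quotient to $\Hit(S)$) is independent of that choice; thus for the purpose of computing $\overline\Psi\circ\Phi$ we may as well normalize with $\rho_0=\rho$, whereupon $\mathcal F$ is literally $\mathcal F_\rho|_{\partial_\infty\widetilde\lambda}$ and $\Psi(P)=\rho$.

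**Main obstacle.** I expect the only real subtlety to be the careful bookkeeping of the $\PGL$--ambiguities and the normalization: making sure that the decoration produced by $\overline\Psi$ and the restriction of $\mathcal F_\rho$ are identified \emph{as maps}, not merely up to $\PGL$, and that this identification is compatible on all of $\partial_\infty\widetilde\lambda$ rather than just on the base triangle. Everything else is a direct invocation of the uniqueness statements already proved in Lemmas~\ref{lem:FlagMapTriangles}--\ref{lem:FlagMapMonodromyIsHitchin} together with the $\PGL$--invariance of triple and double ratios; there is essentially no new computation.
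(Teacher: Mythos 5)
There are two genuine gaps, and the more serious one is exactly the point the paper singles out as delicate. In your argument for $\overline\Psi\circ\Phi=\Id_{\Hit(S)}$ you dismiss the dependence on the normalizing representation $\rho_0$ as harmless, on the grounds that changing the normalization only changes $\Psi(P)$ by a $\PGL$--conjugation, ``so $\overline\Psi$ is independent of that choice.'' But the character variety $\mathcal R_{\PSL}(S)$ is the quotient by $\PSL$--conjugation, not $\PGL$--conjugation: when $n$ is even, $\PGL/\PSL\cong\Z_2$ acts nontrivially on $\mathcal R_{\PSL}(S)$ and in fact sends $\Hit(S)$ to a different component (this is stated right after Lemma~\ref{lem:FlagMapMonodromyIsHitchin}). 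So renormalizing ``with $\rho_0=\rho$'' is not available for free, and $\overline\Psi\bigl(\Phi([\rho])\bigr)=[\rho]$ does not follow as you state it. The paper instead keeps the normalization fixed and shows that the unique element $A_\rho\in\PGL$ carrying $\bigl(\mathcal F_\rho(x_0),\mathcal F_\rho(y_0),\mathcal F_\rho(z_0)^{(1)}\bigr)$ to $(E_0,F_0,G_0^{(1)})$ lies in $\PSL$: it depends continuously on $\rho$, equals the identity at $\rho_0$, and $\Hit(S)$ is connected. Then $A_\rho\circ\mathcal F_\rho$ restricted to $\partial_\infty\widetilde\lambda$ is the normalized decoration, $\Psi(P)=A_\rho\,\rho\,A_\rho^{-1}$, and since $A_\rho\in\PSL$ this represents the same class as $\rho$. (For $n$ odd your shortcut is harmless, since $\PGL=\PSL$.)

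In the direction $\Phi\circ\overline\Psi=\Id_{\mathcal P}$ there is also a missing step: you assert that, for $\rho=\Psi(P)$, the restriction $\mathcal F_\rho|_{\partial_\infty\widetilde\lambda}$ ``agrees with the decoration $\mathcal F$ up to the $\PGL$--ambiguity,'' but give no argument. The uniqueness clause of Lemma~\ref{lem:FlagMapConstructed} cannot be invoked here without circularity, since it only compares decorations already known to realize the invariants $P$, which for $\mathcal F_\rho|_{\partial_\infty\widetilde\lambda}$ is precisely what must be proved; and $\rho$--equivariance alone does not determine the decoration, because at a fixed point of $\rho\bigl([c_i]\bigr)$ there are many invariant flags. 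The needed argument (the substance of the paper's proof of this direction) is that every point $x\in\partial_\infty\widetilde\lambda$ is an endpoint of a lift $\widetilde c_i$ of a closed leaf, and that by the construction in Lemma~\ref{lem:FlagMapGluedClosedLeaves} the decoration $\mathcal F$ sends $x$ to the stable or unstable eigenflag of $\rho\bigl([c_i]\bigr)$, which by Proposition~\ref{prop:FlagMap}(1) is exactly $\mathcal F_\rho(x)$; hence $\mathcal F=\mathcal F_\rho|_{\partial_\infty\widetilde\lambda}$ on the nose, and the triangle and shear invariants of $\rho$ equal those prescribed by $P$. With these two points supplied, the rest of your outline coincides with the paper's proof.
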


\begin{proof}
Let $P$ be a point of $\mathcal P$, consisting of functions $\tau_{abc}$ and $\sigma_a$.  We just proved that $\rho = \Psi(P)$ is a Hitchin representation. Let $\mathcal F_\rho \colon \partial_\infty \widetilde S \to \Flag$ be its flag curve. 

Let  $\mathcal F \colon  \partial_\infty \widetilde \lambda \to \Flag$ be the normalized flag decoration used in the construction of $\rho$. Every point  $ x\in \partial_\infty \widetilde \lambda $ is an end point of a component $\widetilde c_i$ of a closed leaf $c_i$ of $\lambda$, and this component $\widetilde c_i$ is invariant under an element $[c_i] \in \pi_1(S)$ represented by $c_i$ suitably connected to the base point by a path. By construction (see the proof of Lemma~\ref{lem:FlagMapGluedClosedLeaves}), $\mathcal F(x)$ is the stable or unstable flag of $\rho \bigl( [c_i] \bigr)$, according to whether $x$ is the positive or negative endpoint of $\widetilde c_i$. Consequently, the flag decoration $\mathcal F$ is just the restriction of the flag curve $\mathcal F_\rho$ to $\partial_\infty \widetilde \lambda $. 

It follows that the triangle and shear invariants $\tau_{abc}^\rho$ and $\sigma_a^\rho$ of $\rho$ are equal to the triangle and shear invariants of the flag decoration $\mathcal F$, namely are equal to the functions $\tau_{abc}$ and $\sigma_a$ we started with. This can be rephrased as $\Phi \bigl( \overline \Psi(P) \bigr) =P$. 

	Since this holds for every $P \in \mathcal P$, this proves that $\Phi \circ \overline \Psi = \Id_{\mathcal P}$. 
	
	Conversely, let $[\rho] \in \Hit(S)$ be represented by a Hitchin representation $\rho \colon \pi_1(S) \to \PSL$. The image $P=\Phi\bigl( [\rho ] \bigr)\in \mathcal P$ is defined by the triangle and shear invariants $\tau_{abc}^\rho$ and $\sigma_z^\rho$ of $\rho$. 
	
	To determine $\rho' = \Psi(P)$, we need a normalized flag decoration whose triangle and shear invariants correspond to $P$, namely are equal to $\tau_{abc}^\rho$ and $\sigma_a^\rho$. The restriction to $\partial_\infty \widetilde\lambda$ of the flag curve $\mathcal F_\rho \colon \partial_\infty \widetilde S \to \Flag$ has the correct invariants, but is not normalized. If $x_0$, $y_0$, $z_0$ are the vertices of the component $\widetilde T_{i_0}$ of $\widetilde S - \widetilde \lambda$ that we used as a base triangle in the normalization, Lemma~\ref{lem:IncompleteFlagTripleUnique} says that there exists a unique $A_\rho \in \PGL$ that sends the flag $\mathcal F_\rho(x)$ to $E_0$,  the flag $\mathcal F_\rho(y)$ to $F_0$ and the line   $\mathcal F_\rho(z)^{(1)}$ to $G_0^{(1)}$. When $\rho$ is the Hitchin representation $\rho_0$ used to define the flag triple $(E_0, F_0. G_0)$ in the normalization, $A_{\rho_0}$ is equal to the identity. Since $A_\rho$ depends continuously on $\rho$, it is therefore contained in the component of the identity in $\PGL$, namely in $\PSL$. 
	
	Now, the restriction of $A_\rho \circ \mathcal F_{\rho}$ to $\partial_\infty \widetilde\lambda$ is a normalized decoration  whose triangle and shear invariants correspond to $P\in \mathcal P$. It is equivariant with respect to the homomorphism $\rho'$ obtained by conjugating $\rho$ with $A_\rho$, so that the Hitchin representation $\Psi(P) $ is equal to $\rho'$. Since we just proved that $A_\rho$ is in $\PSL$, the homomorphism $\rho'$ represents the same element of the character variety $\mathcal R_{\PSL}(S)$ as $\rho$, and $\overline \Psi(P) = [\rho'] = [\rho] \in \Hit(S)$. 		
			
	This proves that $\overline \Psi \bigl( \Phi([\rho]) \bigr) = [\rho]$ for every $[\rho]\in \Hit(S)$, namely that $\overline \Psi \circ \Phi = \Id_{\Hit(S)}$. 			
	Since $\overline \Psi \circ \Phi = \Id_{\Hit(S)}$ and $\Phi \circ \overline \Psi = \Id_{\mathcal P}$, this proves that $\overline\Psi$ is the inverse of $\Phi$. 
\end{proof}

This proves that $\Phi \colon \Hit(S) \to \mathcal P$ is a homeomorphism.

The following proposition  completes the proof of Theorem~\ref{thm:Parametrization}. 

\begin{prop}
The map $\Phi \colon \Hit(S) \to \mathcal P$ and its inverse $\overline \Psi$ are real analytic. 
\end{prop}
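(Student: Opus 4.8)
The plan is to show directly that each of $\Phi$ and $\overline\Psi$ is built out of finitely many real-analytic operations; since Lemma~\ref{lem:ParametrizationInverse} already exhibits them as mutually inverse homeomorphisms, this makes them mutually inverse real-analytic isomorphisms, so no appeal to the inverse function theorem is needed. The one point requiring attention is that the flag curve $\mathcal F_\rho\colon\partial_\infty\widetilde S\to\Flag$ depends on the Hitchin representation $\rho$ only continuously, being the output of a dynamical fixed-point construction. The remedy is the observation that the invariants $\tau_{abc}^\rho(T_j,v_j)$, $\sigma_a^\rho(g_j)$ and $\sigma_a^\rho(c_i)$ involve $\mathcal F_\rho$ only at points of $\partial_\infty\widetilde\lambda$, and that every such point is an ideal endpoint of a lift of a closed leaf of $\lambda$: indeed each end of an infinite leaf of $\lambda$ spirals onto some $c_i$ and therefore shares an ideal endpoint with a lift of $c_i$. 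At such a point $u$, the flag $\mathcal F_\rho(u)$ is the stable or unstable flag of $\rho(\gamma)$ for an explicit $\gamma\in\pi_1(S)$ conjugate to some $[c_i]$, and hence depends \emph{real-analytically} on $\rho$, since $\rho(\gamma)$ has simple real eigenvalues by Proposition~\ref{prop:HitchinLoxodromic} and the eigenline attached to a simple eigenvalue of a matrix varies real-analytically with the matrix.

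For $\Phi$, I would first fix the real-analytic structure: near $\Hit(S)$ the character variety $\mathcal R_{\PSL}(S)$ is a real-analytic manifold, obtained as the quotient of the open locus $\mathrm{Hom}^{\mathrm{Hit}}$ of Hitchin representations in $\mathrm{Hom}(\pi_1(S),\PSL)$ by the free and proper conjugation action of $\PSL$, the quotient map being a real-analytic principal bundle. Since the triangle and shear invariants are conjugation-invariant, it suffices to check that the lift $\mathrm{Hom}^{\mathrm{Hit}}\to\R^N$ sending $\rho$ to the tuple of all its invariants is real-analytic. It involves finitely many invariants, each expressed through the formulas of \S\ref{sect:HitchinRepsInvariants} in terms of the flags $\mathcal F_\rho(u)$ at finitely many points $u\in\partial_\infty\widetilde\lambda$; by the previous paragraph each such flag is a real-analytic function of $\rho$; and each invariant is the logarithm of a triple or double ratio, which is a rational function of the flag coordinates with non-vanishing denominators and positive argument of the logarithm (Proposition~\ref{prop:FlagMap}). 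Hence the lift, and therefore $\Phi$, is real-analytic; and since the image of $\Phi$ lies in the affine subspace spanned by the polytope $\overline{\mathcal P}$, real-analyticity into $\R^N$ gives real-analyticity into $\mathcal P$.

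For $\overline\Psi$, I would revisit the construction of $\Psi$ in Lemmas~\ref{lem:FlagMapTriangles}--\ref{lem:FlagMapMonodromyIsHitchin}. It was already noted there (proof of Lemma~\ref{lem:FlagMapMonodromyIsHitchin}) that the normalized flag decoration $\mathcal F\colon\partial_\infty\widetilde\lambda\to\Flag$ and the homomorphism $\rho=\Psi(P)$ depend continuously on $P=(\tau_{abc},\sigma_a)$; the task is to upgrade this to real-analytic dependence, which follows because each step is real-analytic in $P$: the combinatorial scheme of the inductive gluing is fixed by $(S,\lambda)$ and independent of $P$; producing a flag triple with prescribed triple ratios $\exp\tau_{abc}$ is given by explicit rational formulas (Proposition~\ref{prop:TripRatiosDetermineFlagTriples} and \cite[\S9]{FoG1}); matching two generic flag pairs by an element of $\PGL$ and then adjusting by the diagonal factor $B$ so as to realize prescribed double ratios $\exp\sigma_a$ is rational, via the explicit identity $D_a(E,F,G,G')=-\gamma_{a+1}\gamma_a'/(\gamma_{a+1}'\gamma_a)$ from the proof of Lemma~\ref{lem:FlagMapAdjacentTriangles} and its analogue in Lemma~\ref{lem:FlagMapGluedClosedLeaves}; the eigenvalues and eigenlines of $\rho_1([c_i])$ used in Lemma~\ref{lem:FlagMapGluedClosedLeaves} are simple (thanks to the Closed Leaf Inequalities) and hence real-analytic in its matrix; and recovering $\rho(\gamma)$ on a finite generating set of $\pi_1(S)$ from the values of $\mathcal F$ at $\gamma x_0$, $\gamma y_0$, $\gamma z_0$ uses the rational formula for the unique element of $\PGL$ matching two incomplete flag triples (Lemma~\ref{lem:IncompleteFlagTripleUnique}). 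Every denominator occurring is non-zero on the open set of generic, positive configurations that contains the image, so $\Psi$ is a finite composition of real-analytic maps; by Lemma~\ref{lem:FlagMapMonodromyIsHitchin} it is valued in $\mathrm{Hom}^{\mathrm{Hit}}$, and composing with the real-analytic quotient map yields that $\overline\Psi$ is real-analytic.

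The main obstacle is really just the first paragraph: recognizing that, although $\mathcal F_\rho$ as a whole is only continuous in $\rho$, its restriction to $\partial_\infty\widetilde\lambda$ — the only part that enters the invariants — is algebraic in $\rho$, because those points are the fixed points of the elements $\rho([c_i])$ and their conjugates. Once that is granted, the rest is the routine bookkeeping of verifying that a fixed finite composition of exponentials, wedge products, ratios with non-vanishing denominators, solutions of linear systems with invertible coefficient matrices, and extractions of simple eigenlines is real-analytic; the finiteness needed at every stage is built into the problem, since only finitely many invariants and finitely many group elements are ever involved.
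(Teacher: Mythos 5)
Your proposal is correct and follows essentially the same route as the paper: for $\Phi$, the observation that the flags at points of $\partial_\infty\widetilde\lambda$ are stable/unstable flags of explicit elements $\rho(\gamma)$ with simple eigenvalues (hence analytic in $\rho$), and for $\overline\Psi$, the step-by-step verification that the explicit construction of the normalized flag decoration and of $\rho(\gamma)$ on finitely many group elements is analytic in the point of $\mathcal P$. The extra care you take with the analytic structure of the character variety and with the rationality of each gluing step is just a more detailed rendering of the paper's argument.
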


\begin{proof}
For a Hitchin representation $\rho$, the triangle invariant $\tau_{abc}^\rho(T_j, v_j)$ is a (real) analytic function of the three flags $\mathcal F_\rho(x)$, $\mathcal F_\rho(y)$, $\mathcal F_\rho(z)\in \Flag$ associated by the flag curve $\mathcal F_\rho$ to the vertices $x$, $y$, $z\in \partial_\infty \widetilde S$ of a lift $\widetilde T_i$ of the triangle $T_i$. 

Because the three ends of $T_j$ spiral around closed leaves $c_i$ of the geodesic lamination $\lambda$, the vertex $x\in \partial_\infty \widetilde S$ is the stable or unstable fixed point of some element $[c_i] \in \pi_1(S)$ represented by one of the closed leaves $c_i$. Therefore, by Part~(1) of Proposition~\ref{prop:FlagMap}, $\mathcal F_\rho(x)$ is the stable or unstable flag of $\rho \bigl( [c_i] \bigr)\in \PSL$. Since the matrix $\rho \bigl( [c_i] \bigr)$ is an analytic function of $\rho$, so is its stable or unstable flag $\mathcal F_\rho(x)$. The same of course holds for $\mathcal F_\rho(y)$ and  $\mathcal F_\rho(z)$.

This proves that the three flags $\mathcal F_\rho(x)$, $\mathcal F_\rho(y)$, $\mathcal F_\rho(z)$  analytically depend on the representation $\rho$. It follows that the triangle invariant $\tau_{abc}^\rho(T_j, v_j)$ is an analytic function of $\rho$. 

The same argument shows that each shear invariant $\sigma_a(g_i)$ or $\sigma_a(c_j)$ is also an analytic function of $\rho$. 

This proves that the point $\Phi\bigl([\rho]\bigr)\in \mathcal P$ represented by the triangle and shear invariants of the Hitchin representation $\rho$ analytically depends on $\rho$. In other words, $\Phi \colon \Hit(S) \to \mathcal P$ is analytic. 

Conversely, consider our definition of the inverse map $\overline \Psi=\Phi^{-1} \colon \mathcal P \to \Hit(S)$. Given a point $P\in \mathcal P$ represented by functions $\tau_{abc}$ and $\sigma_a$, we constructed a normalized flag decoration $\mathcal F \colon \partial_\infty \widetilde\lambda \to \Flag$. This construction, developed in the proofs of Lemmas~\ref{lem:FlagMapTriangles}--\ref{lem:FlagMapConstructed}, is very explicit. As a consequence, for every $x\in \partial_\infty \widetilde\lambda$, the flag $\mathcal F(x) \in \Flag$ is an analytic function of the point $P\in \mathcal P$. 

We now consider the Hitchin representation $\rho = \Psi(P)$ with respect to which $\mathcal F$ is $\rho$--equivariant. Let $x$, $y$, $z\in \partial_\infty \widetilde\lambda$ be the vertices of a fixed triangle component $\widetilde T_j$ of $\widetilde S - \widetilde \lambda$. Then, for each $\gamma \in \pi_1(S)$, the element $\rho(\gamma) \in \PSL$ can be analytically expressed in terms of the six flags $\mathcal F_\rho(x)$, $\mathcal F_\rho(y)$, $\mathcal F_\rho(z)$, $\mathcal F_\rho(\gamma x)$, $\mathcal F_\rho(\gamma  y)$, $\mathcal F_\rho(\gamma  z)\in \Flag$. As a consequence, $\rho(\gamma)$ analytically depends on the point $P \in \mathcal P$. This proves that $\rho= \Psi(P)$ is an analytic function of $P$. 

In other words,  the function 
$$
\Psi \colon \mathcal P \to \{ \text{homomorphisms } \rho \colon \pi_1(S) \to \PGL \}
$$
is analytic. Its composition $\overline \Psi$ with the projection to $\Hit(S)$ is therefore analytic. 
\end{proof}

\section{Global relations between triangle invariants}

Compared to the classical case of the parametrization of the Teichm\"uller space $\mathcal F(S)$ by shear coordinates, the really new feature in the parametrization of Theorem~\ref{thm:Parametrization} is provided by the triangle invariants $\tau_{abc}(T_j, v_j)$. A somewhat surprising property of these triangle invariants is that they are not independent of each other, and are constrained by certain linear relations. 

\begin{prop}
\label{prop:RelationsTriangleInvariants}
Let $\rho$ be a Hitchin representation with triangle invariants $\tau_{abc}^\rho(T_j, v_j)$. Then,  for every integer $a$ with $1 \leq a \leq n-1$, 
$$
 \sum_{j=1}^u \ \sum_{v_j \text{\,vertex\,of } T_j} \biggl(
 \sum_{b+c=n-a} \tau_{abc}^\rho(T_j, v_j) -  \sum_{b+c=a} \tau_{(n-a)bc}^\rho(T_j, v_j) \biggr)=0
$$
where the first sum is over all components $T_1$, $T_2$, \dots, $T_u$ of $S-\lambda$, and where the second sum is over all three vertices of the triangle $T_j$. 
\end{prop}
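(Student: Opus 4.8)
The plan is to extract the relation from the length formulas of Proposition~\ref{prop:LengthFunctionOtherInvariants} by summing them over all closed leaves of $\lambda$ and watching the shear invariants of the infinite leaves cancel out. Fix an index $a$ with $1\le a\le n-1$. For a component $T_j$ of $S-\lambda$ and a vertex $v_j$ of $T_j$, abbreviate
$$
\Theta_a(T_j,v_j)=\sum_{b+c=n-a}\tau_{abc}^\rho(T_j,v_j)-\sum_{b+c=a}\tau_{(n-a)bc}^\rho(T_j,v_j),
$$
so that the identity to be proved reads $\sum_{j}\sum_{v_j}\Theta_a(T_j,v_j)=0$. Likewise, for an infinite leaf $g$ spiraling around a closed leaf $c_i$, set $\Xi_a(g,c_i)=\overline\sigma_a^\rho(g)-\overline\sigma_{n-a}^\rho(g)$, with the bar taken relative to $c_i$ as in Proposition~\ref{prop:LengthFunctionOtherInvariants}; unwinding the definition of the bar, one has $\Xi_a(g,c_i)=+(\sigma_a^\rho(g)-\sigma_{n-a}^\rho(g))$ when $g$ is oriented towards $c_i$ and $\Xi_a(g,c_i)=-(\sigma_a^\rho(g)-\sigma_{n-a}^\rho(g))$ when it is oriented away from $c_i$.

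The first step produces one linear relation per closed leaf. Fix a closed leaf $c_i$. Apply Proposition~\ref{prop:LengthFunctionOtherInvariants} to the right-hand side $c_i^\Right$ twice, once with index $a$ and once with index $n-a$, and subtract. Whichever of the two cases for $c_i^\Right$ is in force (that is, whichever way the leaves spiral along this side, which by a standard property of geodesic laminations is constant on each side of $c_i$), the closed-leaf terms combine into $\ell_a^\rho(c_i)-\ell_{n-a}^\rho(c_i)$, and a routine bookkeeping of signs — using that the $\overline\sigma$-bar is insensitive to the swap $a\leftrightarrow n-a$, and that $\sum_{b+c=n-a}\tau_{abc}$ and $\sum_{b+c=a}\tau_{(n-a)bc}$ exchange roles under this swap — shows that the right-hand side is exactly $\sum_l\Xi_a(g_{i_l},c_i)+\sum_l\Theta_a(T_{j_l},v_l)$, the sums ranging over the infinite leaves and triangles spiraling on $c_i^\Right$. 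Doing the same computation on $c_i^\Left$ yields $\ell_a^\rho(c_i)-\ell_{n-a}^\rho(c_i)=-\sum_l\Xi_a(g_{i_l'},c_i)-\sum_l\Theta_a(T_{j_l'},v_l')$. Equating the two expressions for $\ell_a^\rho(c_i)-\ell_{n-a}^\rho(c_i)$ gives
$$
\sum_{g\to c_i}\Xi_a(g,c_i)+\sum_{(T,v)\to c_i}\Theta_a(T,v)=0,
$$
where both sums run over all leaves, resp.\ (triangle, vertex) pairs, that spiral around $c_i$ on either side, counted with the multiplicity used in Proposition~\ref{prop:LengthFunctionOtherInvariants}.

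The second step is to sum this identity over the $s$ closed leaves $c_1,\dots,c_s$ and observe two cancellations. Each infinite leaf $g$ has exactly two ends, one positive and one negative, each spiraling around a (not necessarily distinct) closed leaf; the appearance at the positive end contributes $+(\sigma_a^\rho(g)-\sigma_{n-a}^\rho(g))$ and the appearance at the negative end contributes $-(\sigma_a^\rho(g)-\sigma_{n-a}^\rho(g))$, so $\sum_i\sum_{g\to c_i}\Xi_a(g,c_i)=0$. On the other hand, each of the three vertices of each triangle $T_j$ of $S-\lambda$ records the spiral of $T_j$ around exactly one closed leaf, so the (triangle, vertex) pairs spiraling around the various $c_i$ are precisely all (triangle, vertex) pairs of $S-\lambda$, each counted once, whence $\sum_i\sum_{(T,v)\to c_i}\Theta_a(T,v)=\sum_{j=1}^u\sum_{v_j}\Theta_a(T_j,v_j)$. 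Putting the two together gives $\sum_{j}\sum_{v_j}\Theta_a(T_j,v_j)=0$, which is the assertion. (As a side remark, since $\Theta_{n-a}=-\Theta_a$, the relations for $a$ and for $n-a$ coincide, and for $n$ even the relation for $a=n/2$ is vacuous, in agreement with the codimension count in Proposition~\ref{prop:RelationsTriangleInvariants} of the introduction.)

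There is no genuinely hard point here; the one place that requires care is the sign and orientation bookkeeping in the first step. What must be checked precisely is that, although the four cases of Proposition~\ref{prop:LengthFunctionOtherInvariants} look different, passing to the combination $\ell_a-\ell_{n-a}$ makes the right-side expression equal to $+\bigl(\sum\Xi_a+\sum\Theta_a\bigr)$ and the left-side expression equal to $-\bigl(\sum\Xi_a+\sum\Theta_a\bigr)$ — it is exactly this opposite sign, forced by the left/right asymmetry in Proposition~\ref{prop:LengthFunctionOtherInvariants}, that causes the closed-leaf lengths to drop out when the two are equated. Everything else is a matter of correctly matching the leaves and triangles spiraling around a closed leaf with the combinatorial data of $\lambda$.
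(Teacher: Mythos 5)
Your proposal is correct and follows essentially the same route as the paper: for each closed leaf you form the difference $\ell_a^\rho(c_i)-\ell_{n-a}^\rho(c_i)$ from Proposition~\ref{prop:LengthFunctionOtherInvariants} on each side (noting that the four spiraling cases collapse to a single formula for this difference), equate the two sides, then sum over the closed leaves so that the shear contributions of the two ends of each infinite leaf cancel and the (triangle, vertex) pairs are enumerated exactly once. The sign bookkeeping and the final regrouping match the paper's argument.
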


Note that the equation associated with the index $n-a$ is, up to sign, the same as  the equation associated with $a$. So in practice there are only $\lfloor \frac{n-1}2\rfloor$ equations here. One easily sees that these $\lfloor \frac{n-1}2\rfloor$ equations are linearly independent, as they involve different sets of terms $\tau_{a'b'c'}^\rho(T_j, v_j)$. 

\begin{proof}
This is a consequence of Proposition~\ref{prop:LengthFunctionOtherInvariants}.

We will use a slightly different notation for the formulas of  Proposition~\ref{prop:LengthFunctionOtherInvariants}. Write $g_k \to c_i^\Right$ to indicate that one end of the infinite leaf $g_k$ spirals towards the right-hand side $c_i^\Right$. Similarly, we will write $(T_j, v_j) \to c_j^\Right$ when the triangle component $T_j$ of $S-\lambda$ spirals towards $c_i^\Right$, in the direction of the vertex $v_j$ of $T_j$. When $g_k \to c_i^\Right$, the quantity $\overline\sigma_a^\rho( g_k)$ denotes $ \sigma_a^ \rho( g_k)$ if the leaf $ g_k$ is oriented towards $ c_i$ and $ \sigma_{n-a}^\rho( g_k)$ otherwise. 

Proposition~\ref{prop:LengthFunctionOtherInvariants} computes the length $\ell_a^\rho(c_i)$ in terms of the triangle and shear invariants of the triangles and leaves spiraling on the right-hand side of $c_i$. The corresponding formula depends on the direction of the spiraling on the right-hand side of $c_i$. However, there is no such distinction to be made when computing the difference $\ell_a^\rho(c_i)-\ell_{n-a}^\rho(c_i)$.
Indeed, independently of the direction of the spiraling, 
\begin{align*}
\ell_a^\rho(c_i)-\ell_{n-a}^\rho(c_i) &=
\sum_{g_k \to c_i^\Right} \bigl( \overline\sigma_a^\rho(g_k) -   \overline\sigma_{n-a}^\rho(g_k) \bigr)\\
&\qquad + \sum_{(T_j, v_j) \to c_i^\Right} \biggl( \sum_{b+c=n-a} \tau_{abc}^\rho(T_j, v_j) -  \sum_{b+c=a} \tau_{(n-a)bc}^\rho(T_j, v_j) \biggr).
\end{align*}
Note that an infinite leaf $g_k$ whose two ends spiral towards $c_i^\Right$ will contribute two terms to the first sum, one for each end of $g_k$; there is a definite abuse of notation in this case, as these two contributions are both written as $ \overline\sigma_a^\rho(g_k) -   \overline\sigma_{n-a}^\rho(g_k) $, but are equal to $\sigma_a^\rho(g_k) -  \sigma_{n-a}^\rho(g_k)$ for the positive end and $\sigma_{n-a}^\rho(g_k) -  \sigma_{a}^\rho(g_k)$ for the negative end.

Switching attention to the left-hand side $c_i^\Left$, Proposition~\ref{prop:LengthFunctionOtherInvariants} similarly gives
\begin{align*}
\ell_a^\rho(c_i)-\ell_{n-a}^\rho(c_i) &=
-\sum_{g_k \to c_i^\Left} \bigl( \overline\sigma_a^\rho(g_k) -   \overline\sigma_{n-a}^\rho(g_k) \bigr)\\
&\qquad - \sum_{(T_j, v_j) \to c_i^\Left} \biggl( \sum_{b+c=n-a} \tau_{abc}^\rho(T_j, v_j) -  \sum_{b+c=a} \tau_{(n-a)bc}^\rho(T_j, v_j) \biggr).
\end{align*}

Combining these two equations and summing over all closed leaves $c_1$, $c_2$, \dots, $c_s$ of the geodesic lamination $\lambda$, we obtain
\begin{align*}
&  \sum_{i=1}^s \sum_{(T_j, v_j) \to c_i} \biggl( \sum_{b+c=n-a} \tau_{abc}^\rho(T_j, v_j) -  \sum_{b+c=a} \tau_{(n-a)bc}^\rho(T_j, v_j) \biggr) \\
&\qquad \qquad \qquad \qquad \qquad \qquad \qquad \qquad \qquad +
 \sum_{i=1}^s  \sum_{g_k \to c_i} \bigl( \overline\sigma_a^\rho(g_k) -   \overline\sigma_{n-a}^\rho(g_k) \bigr) =0
\end{align*}
where the statement $g_k \to c_i$ is shorthand for `` $g_k \to c_i^\Right$ or $g_k \to c_i^\Left$ '', and similarly for $(T_j, v_j) \to c_i$.

Each infinite leaf $g_k$ contributes two terms $ \overline\sigma_a(g_k) -   \overline\sigma_{n-a}(g_k)$ to the second sum, respectively  equal to $\sigma_a^\rho(g_k) -  \sigma_{n-a}^\rho(g_k)$ for the positive end of $g_k$ and to $\sigma_{n-a}^\rho(g_k) -  \sigma_{a}^\rho(g_k)$ for the negative end. It follows that all terms in this second sum cancel out, so that we are only left with the first sum. 

A slightly different grouping of the terms of the first sum gives the equation of Proposition~\ref{prop:RelationsTriangleInvariants}. 
\end{proof}

A more conceptual and more general proof of Proposition~\ref{prop:RelationsTriangleInvariants}, using the length functions of \cite{Dre} and a cohomological argument, appears in \cite{BonDre}. 

We also prove in \cite{BonDre} that the relations of  Proposition~\ref{prop:RelationsTriangleInvariants} are the only constraints satisfied by the triangle invariants $\tau_{abc}(T_j, v_j)$. This property could also be proved with the results and techniques of the current article,  by elementary but somewhat cumbersome linear algebra. However, we prefer to omit it.

\end{document}